\newcommand{\Fr}{{\rm Fr}}
\newcommand{\cl}{{cl}}
\newcommand{\ol}{{\ov{\mathcal{OL}}}}
\newcommand{\conv}{{\rm conv}}
\newcommand{\ve}{{\und{\eps}}}
\newcommand{\vd}{{\und{\de}}}
\newcommand{\er}{{\Diamond}}
\newcommand{\uY}{{\ul{Y}}}
\newcommand{\Sss}{{\mathscr S}}
\newcommand{\st}{{\rm st}}
\newcommand{\s}{{\mathfrak s}}
\newcommand{\TOo}{{\Ti \Oo}}
\newcommand{\ul}{\underline}
\newcommand{\Obj}{{\rm Obj}}
\newcommand{\Ti}{\widetilde}
\newcommand{\pr}{{\rm pr}}
\newcommand{\bB}{{\bf B}}
\newcommand{\bG}{{\bf G}}
\newcommand{\bM}{{\bf M}}
\newcommand{\bX}{{\bf X}}
\newcommand{\bW}{{\bf W}}
\newcommand{\bZ}{{\bf Z}}
      \newcommand{\upsi}{{\underline{\psi}}}
\newcommand{\rT}{{\rm T}}
\newcommand{\rd}{{\rm d}}
\newcommand{\oo}{{\mathfrak o}}
\newcommand{\Mor}{{\rm Mor}}
\newcommand{\intt}{{\rm int\,}}
\newcommand{\und}{\underline}
\renewcommand{\Hat}{\widehat}
\newcommand{\Cc}{{\mathcal C}}
\newcommand{\Dd}{{\mathcal D}}
\newcommand{\Kk}{{\mathcal K}}
\newcommand{\Ff}{{\mathcal F}}
\newcommand{\im}{{\rm im\,}}
\newcommand{\less}{{\smallsetminus}}
\newcommand{\supp}{{\rm supp\,}}
\newcommand{\TU}{{\Tilde U}}
\newcommand{\p}{{\partial}}
\newcommand{\al}{{\alpha}}
\newcommand{\be}{{\beta}}
\newcommand{\Om}{{\Omega}}
\newcommand{\om}{{\omega}}
\newcommand{\eps}{{\varepsilon}}
\newcommand{\de}{{\delta}}
\newcommand{\De}{{\Delta}}
\newcommand{\ga}{{\gamma}}
\newcommand{\Ga}{{\Gamma}}
\newcommand{\io}{{\iota}}
\newcommand{\ka}{{\kappa}}
\newcommand{\la}{{\lambda}}
\newcommand{\La}{{\Lambda}}
\newcommand{\si}{{\sigma}}
\newcommand{\Gg}{{\mathcal G}}
\newcommand{\Uu}{{\mathcal U}}
\newcommand{\Ww}{{\mathcal W}}
\newcommand{\Oo}{{\mathcal O}}
\newcommand{\Ss}{{\mathcal S}}
\newcommand{\ov}{\overline}
\newcommand{\id}{{\rm id}}
\renewcommand{\Tilde}{\widetilde}
\newcommand{\TM}{{\Tilde M}}
\newcommand{\TV}{{\Tilde V}}
\newcommand{\TW}{{\Tilde W}}
\newcommand{\Ee}{{\mathcal E}}
\newcommand{\Ii}{{\mathcal I}}
\newcommand{\Vv}{{\mathcal V}}
\newcommand{\Q}{{\mathbb Q}}
\newcommand{\R}{{\mathbb R}}
\newcommand{\C}{{\mathbb C}}
\newcommand{\Z}{{\mathbb Z}}
\newcommand{\Hom}{{\rm Hom}}
\newcommand{\Nn}{{\mathcal N}}
\newcommand{\Pp}{{\mathcal P}}
\newcommand{\Hh}{{\mathcal H}}
\newcommand{\SSS}{{\smallskip}}
\newcommand{\bE}{{\bf E}}
\newcommand{\bK}{{\bf K}}
\newtheorem{theorem}{Theorem}[subsection]
\newtheorem{thm}[theorem]{Theorem}
\newtheorem{cor}[theorem]{Corollary}
\newtheorem{lemma}[theorem]{Lemma}
\newtheorem{prop}[theorem]{Proposition}
\newtheorem{defn}[theorem]{Definition}
\newtheorem{example}[theorem]{Example}
\newtheorem{rmk}[theorem]{Remark}
\numberwithin{figure}{section}
\numberwithin{equation}{subsection}
\newcommand{\MS}{{\medskip}}
\newcommand{\NI}{{\noindent}}
   \newcounter{qcounter}
\newenvironment{itemlist}
   { \begin{list} {$\bullet$}
         {  \setlength{\itemsep}{.5ex} \setlength{\leftmargin}{2.5ex} } }
   { \end{list} }
\newcommand\quotient[2]{
        \mathchoice
            {% \displaystyle
                \text{\raise1ex\hbox{$#1$}\Big/\lower1ex\hbox{$#2$}}%
            }
            {% \textstyle
                #1\,/\,#2
            }
            {% \scriptstyle
                #1\,/\,#2
            }
            {% \scriptscriptstyle
                #1\,/\,#2
            }
    }
\newcommand\quot[2]{
                \text{\raise1ex\hbox{$#1$}/\lower1ex\hbox{$\scriptstyle#2$}}
  }
\newcommand\qu[2]{
                \text{\raise.8ex\hbox{$\scriptstyle#1\!$}/\lower.8ex\hbox{$\!\scriptstyle#2$}}
  }
\newcommand\ql[2]{
                \text{\lower.6ex\hbox{$\scriptstyle#1\!$}$\backslash$ \raise.6ex\hbox{$\!\scriptstyle#2$}}
  }
\newcommand\qq[2]{
                \text{\raise.8ex\hbox{$#1\!$}/\lower.8ex\hbox{$#2$}}
}
 \title{
Constructing the virtual fundamental class of a Kuranishi atlas}
\author{Dusa McDuff}
 \address{Department of Mathematics,
 Barnard College, Columbia University}
\email{dusa@math.columbia.edu}
\thanks{partially supported by NSF grant DMS 1308669}
\keywords{virtual fundamental cycle, virtual fundamental class, pseudoholomorphic curve, Kuranishi  structure, weighted branched manifold, polyfold}
\subjclass[2010]{18B30, 53D35, 53D45, 57R17, 57R95}
\date{October 4, 2017, revised September 2, 2018}
\begin{document}
\begin{abstract}
Consider a  space $X$, such as a compact space of $J$-holomorphic stable maps, that is the zero set of a Kuranishi atlas.  
This note explains how to define the  virtual fundamental class of $X$ by
representing $X$ via the zero set of a map $\Sss_M: M\to E$, where $E$ is  a finite dimensional  vector space and 
the domain $M$ is an oriented, weighted branched topological manifold.  Moreover, $\Sss_M$ 
 is equivariant  under the action of the global isotropy group $\Ga$ on $M$ and $E$.  This tuple $(M,E, \Ga, \Sss_M)$ together with a homeomorphism from
 $\Sss_M^{-1}(0)/\Ga$ to $X$  forms a single finite dimensional model (or chart) for $X$.  The construction assumes only  that the
 atlas satisfies a topological version of the index condition that can be obtained from a standard, rather than a smooth, gluing theorem.  However if $X$ is presented as the zero set of an sc-Fredholm operator on a strong polyfold bundle, we outline a much more direct construction of the branched manifold $M$ that uses an sc-smooth partition of unity.
\end{abstract}

\maketitle

\tableofcontents
\section{Introduction}\label{s:main}
\subsection{Statement of main results}
Let $X$ be a compact space that is locally the zero set of a Fredholm operator $\Ff$ of index $d$, such as 
a moduli space of $J$-holomorphic stable curves.
The question  of how to define its fundamental class is central to
  symplectic geometry, since so much information about the properties of this geometry
   depends on the ability to \lq count' the number of elements in $X$.
There are many possible approaches to this problem, e.g. \cite{FO,TF, HWZ, H:sur}.  In this note we develop the work of McDuff--Wehrheim~\cite{MW1,MW2,MWiso} and Pardon~\cite{P} that uses atlases, in an attempt to clarify the passage from atlas to
 virtual fundamental class.
 
 A $d$-dimensional atlas consists of a family of charts $\bK_I$ indexed by subsets $I\subset \{1,\dots, N\}=: A$, together with coordinate changes $\Hat\Phi_{IJ}$ for $I\subset J$,
where the chart $\bK_I$ is a tuple
$$
\bK_I = (U_I, E_I, \Ga_I, s_I, \psi_I),
$$
consisting of a manifold $U_I$ of dimension $d + \dim E_I$, a real vector space $E_I$, actions of the group $\Ga_I$ on $U_I$ and on $E_I$, a $\Ga_I$-equivariant map $s_I: U_I\to E_I$, and finally the footprint map $\psi_I: s_I^{-1}(0)\to X$ that induces a homeomorphism
from $(s_I^{-1}(0))/\Ga_I$ onto an open subset $F_I$ of $X$. The charts $\bK_i$ that are indexed by sets $\{i\}$  of length one are called basic charts, and we assume that their footprints $(F_i)_{1\le i\le N}$ cover $X$, while the other charts $\bK_I$ with $|I|>1$  form transition data.   In applications, the corresponding vector spaces $E_i$ cover the cokernel of the Fredholm operator $\Ff$ at the points in the footprint $F_i\subset X$, and are called obstruction spaces because they obstruct the existence of solutions when $\Ff$ is deformed.  The essence of the problem lies in trying to assemble these local finite dimensional models for $X$ into one structure that retains enough information to determine its fundamental class, which (when $d=0$) one can think of as the number of solutions of a \lq\lq generic" perturbation of $\Ff$.\MS

  The paper \cite{MWiso} explains one way
 to use a $d$-dimensional oriented atlas 
to define a \c{C}ech homology class $[X]^{vir}_\Kk\in \check{H}_d(X;\Q)$.   Roughly speaking, the idea is this.
Using the  coordinate changes to identify different domains, one
  constructs a metrizable, Hausdorff space  $|\Kk| = \bigcup_I U_I/\!\!\!\sim$ that supports a (generalized) orbibundle $|\bE_\Kk|\to |\Kk|$  with a canonical section $|\s|: |\Kk|\to |\bE_\Kk|$ together with a natural identification  $$
  \io_X: X\stackrel{\cong}\to |\s|^{-1}(0).
  $$  With some difficulty, one then defines a multi-valued perturbation section $|\nu|: |\Vv|\to |\bE_\Kk|$ on a subset $|\Vv|\subset |\Kk|$, 
  such that  $|\s+\nu|$ is transverse to $0$. Finally, one shows that 
the perturbed zero set  $| \s+\nu|^{-1}(0)$ represents a unique element in $ \check{H}_d(X;\Q)$. 
\MS

Because it uses the notion of transversality, the above construction  requires that the atlas have some smoothness properties.\footnote
{
\, See \cite{Castell1,Castell2} for a weak form of these requirements.} 
In particular, the transition maps between charts must satisfy the 
so-called tangent bundle (or index) condition.
On the other hand, Pardon~\cite{P} introduces a new way to extract topological information from an atlas that 
satisfies a topological version of this condition that he calls the  submersion axiom.  Instead of gluing the chart domains together to form a topological space $|\Kk|$, Pardon works with $K$-homotopy sheaves of (co)chain complexes defined on homotopy colimits of spaces that are obtained from the chart domains. 
This gives a flexible way of assembling  local homological information  into a global object.
 Though this approach may be useful in many contexts,  it is hard for a nonexpert in sheaf theory to understand where the technical difficulties are, and what actually has to be checked to ensure that the
method works in any particular case.  This  becomes  an issue if one wants to extend the method to cases (such as  Hamiltonian Floer theory, or symplectic field theory) in which one must deal with a family of related moduli spaces and so should work on the chain level.  The current paper was prompted by the desire to develop a different approach, that would replace Pardon's sophisticated sheaf theory by more elementary arguments that yet do not require smoothness. 

This note only considers the simplest case, appropriate to Gromov--Witten theory, in which the aim is to construct a homology class $[X]^{vir}_\Kk\in \check{H}_d(X;\Q)$.  Working with Pardon's  submersion axiom, we 
 define a consistent thickening of the domains of the atlas charts to make them all have the same dimension.
In the case with trivial isotropy, one thereby constructs an oriented topological manifold $M$ of dimension $D: = d + \dim E_A$, together with a map $\Sss_M :M\to E_A$ whose zero set can  be identified with $X$.
%It is then a trivial matter to define $[X]^{vir}_\Kk\in \check{H}_d(X;\Z)$. 
If the isotropy is nontrivial,  $M$ is a 
%D weighted 
branched manifold with a weighting function $\La$ and  a global action of the total isotropy group $\Ga_A$, and there is a homeomorphism
$\Sss_M^{-1}(0)/\Ga_A\stackrel{\cong}\to X$.\footnote
{
\, Another way to say this is that $M: = |\Hat\bM|_\Hh$ is the Hausdorff realization of a topological groupoid $\Hat\bM$ that is \'etale but not proper: see  \S\ref{ss:defs}, \S\ref{ss:br} for relevant definitions.  However, just as in the case of the construction of the zero set in \cite{MWiso}, it is most natural to construct a topological category $\bM$ in which not all morphisms are invertible, i.e. it is a monoid, rather than a groupoid.  
%Note also that if one adds morphisms to this category corresponding to the action of $\Ga_A$ then one obtains 
%an ep groupoid, i.e. an orbifold.
 }
 (A typical example of such a manifold $(M,\La)$ is the union of two circles, each of weight $\frac 12$, identified along a closed subarc $A$, so that the points $x\in A$ have weight $\La(x) = 1$, while the others all have weight $\La(x) =\frac12$. See also \S\ref{ss:exam}.) 

Here is the first main result.  (See Theorem~\ref{thm:M}  for a more precise statement.) 
\MS

\NI{\bf Theorem A:}  {\it Let $\Kk$ be a   $d$-dimensional Kuranishi atlas on a compact  space $X$ that satisfies the submersion condition \eqref{eq:subm0} and has
 total obstruction space $E_A: =\prod _{i\in A} E_i$ and total isotropy group
$\Ga_A: = \prod _{i\in A} \Ga_i$.   Let $D = d+\dim E_A$.  Then there is an associated 
weighted branched $D$-dimensional manifold $(M,\La)$  with an action of  $\Ga_A$, and a $\Ga_A$-equivariant map $\Sss_M: M\to E_A$ 
with a compact zero set $\Sss_M^{-1}(0)$.  Moreover, there is a map
 $\psi: \Sss_M^{-1}(0)\to X$ that induces a homeomorphism $\Sss_M^{-1}(0)/\Ga_A\stackrel{\cong}\to X$.   }
\MS 

It is immediate from the construction that the  bordism class of a neighborhood of $\Sss_M^{-1}(0)$ in $M$ depends only on the concordance class  of $\Kk$.\footnote
{
\, Two atlases $\Kk^0,\Kk^1$ on $X$ are said to be concordant if there is an atlas $\Kk^{01}$ on $[0,1]\times X$ whose restriction to 
$\{\al\}\times X$ is $\Kk^\al$, for $\al = 0,1$:  see   \cite[Def.~4.1.6]{MW1}. Note also that as here, when there is no danger of confusion, we often abbreviate   
 \lq Kuranishi atlas' to \lq atlas'.
}
Further,  if $\Kk$ and hence $(M,\La)$ is oriented, we show in Lemma~\ref{le:fundM} that $(M,\La)$ carries a  fundamental class $\mu_M$ in rational \c{C}ech homology $\check{H}_*$.  Hence we have the following.
\MS

\NI{\bf Theorem B:}  {\it If $\Kk$ is an oriented atlas on $X$ as above,  there is a unique element $[X]^{vir}_\Kk\in \check{H}_d(X;\Q)$ that is defined as follows.  
 For $b\in \check{H}^d(X;\Q)$ and $D = d+\dim E_A$, we have
\begin{align} \label{eq:Xvir}
&\langle [X]^{vir}_\Kk,\ b \rangle : = (\Sss_M)_*( \Hat{b})\in \check{H}_{\dim E_A}(E_A, E_A\less \{0\};\Q) \cong \Q,
\end{align}
where $\Hat{b}$ is the image of $b$ under the composite % the Alexander--Lefschetz duality isomorphism
$$
 \check{H}^d(X;\Q)\stackrel{\psi^*}\longrightarrow  \check{H}^d(\Sss_M^{-1}(0);\Q)\stackrel{\Dd}\longrightarrow \check{H}_{\dim E_A} (M, M\less \Sss_M^{-1}(0); \Q),
 $$
and $\Dd$ is given by cap product with the fundamental class $\mu_M$.  Moreover, $[X]^{vir}_\Kk$ depends only on the oriented  concordance class of $\Kk$, and in the smooth case agrees with the class defined in \cite{MWiso}.
}
\MS

A key element of the proof of Theorem~A is Pardon's notion of {\it deformation to the normal cone}, which allows one to assemble different chart domains into a family of topological  manifolds $Y_J$, albeit ones of the  wrong dimension: see Proposition~\ref{prop:Y}.  
 The second key point is the existence of compatible  collars for these manifolds $Y_J$. Remark~\ref{rmk:out} outlines the proof in more detail.
 
 As we explain in Remark~\ref{rmk:sm2}, if we start with a smooth atlas then the proofs of the above results can be somewhat simplified. In particular, by \cite{Mbr} we can construct  $M$ to be a simplicial complex so that there is no need to use so much rational \c{C}ech homology when proving Theorem~B. 
Further, if one works with polyfolds, then  the proof can be radically simplified.
Indeed, it is not difficult 
to define a smooth Kuranishi atlas on any space $X$
that appears as the (compact) zero set of  a polyfold bundle~\cite{HWZ,H:sur,Yang,MWgw}.  Because the polyfolds of Gromov--Witten theory support sc-smooth partitions of unity, if the isotropy is trivial, one can even define such an atlas with just one chart.   In other words, one obtains a finite dimensional model 
$$
(U, \R^N, s, \psi),  \quad \psi: s^{-1}(0)\stackrel{\cong}\longrightarrow X,
$$
for the whole of $X$, in which $U$ is a smooth manifold of dimension $d+ N$ and $s:U\to \R^N$ is a smooth map.  
As we show in Remark~\ref{rmk:poly}  this construction can be adapted in  the presence  of isotropy.  However,
the domain of the single chart is no longer a manifold, but a branched manifold with  action of the total isotropy group $\Ga_A$.

Another simple example is the calculation of the Euler class of an oriented vector bundle $\pi:\Ee\to X$  of rank $2k$ over a compact manifold  $X$. If $\Ee'\to X$ is an oriented complement to $\Ee$  of rank $2\ell$  so that there is a vector bundle isomorphism $\phi: \Ee\oplus \Ee' \cong \R^N\times X$, where $N = 2k+2\ell$, let 
\begin{align}\label{eq:Euler}
M=\Ee',&\qquad \Sss: M\to \R^N, \;\; (e',x)\mapsto \pr_{\R^N} \bigl(\phi(e',x)\bigr).
\end{align}
Then $\Sss^{-1}(0) \cong X$, and it is easy to check that the class $[{X}]^{vir}_\Kk$ defined by \eqref{eq:Xvir} is Poincar\'e dual to the Euler class of $\Ee\to X$: see 
Lemma~\ref{le:Euler}.  This is an instance of the construction in Pardon~\cite[Defn.~5.3.1]{P} for the bundle $\pi:\Ee\to X$ with section $\s\equiv 0$
in which the thickening $\la: \R^N\times X\to \Ee'$ is given by the projection.

Finally note that the methods of this paper should extend, e.g. to a more general notion of atlas, or to 
spaces more general than topological manifolds: see Remark~\ref{rmk:genl}.

\subsection{Basic definitions and facts about atlases}  \label{ss:defs}

A {\bf weak Kuranishi atlas} $\Kk$ of dimension $d$ on a compact metrizable space $X$ consists of the following data.\footnote
{
\, These are essentially the same definitions as in \cite{MWiso},  except that the smoothness requirements mentioned in Remark~\ref{rmk:submer}~(ii) below have been replaced by an equivariant version of Pardon's submersion axiom.   
The notion of topological atlas introduced in \cite{MW1} is somewhat different; in particular the domains there need not be manifolds.
For more details on all topics mentioned in this section, see the original papers \cite{MW1,MW2,MWiso} or \cite{Mnot}.
}
\begin{itemlist}
\item   {\bf (footprint cover)} a finite  open cover of $X$ by nonempty sets $(F_i)_{i\in A}$;
\item a {\bf poset} $\Ii_\Kk =  \bigl\{I\subset A \ \big| \ F_I: = \bigcap_{i\in I} F_i \ne \emptyset\bigr\}$ that indexes the charts;
\item {\bf (charts)} $\forall I\in \Ii_\Kk$,  $F_I$ is the footprint of a chart $\bK_I: = (U_I,\Ga_I, E_I, s_I,\psi_I)$, where
\begin{itemize}\item[-]  $U_I$ is a finite dimensional topological manifold of dimension $d + \dim E_I$; 
\item[-] $E_I: = \prod_{i\in I} E_i$ is a product of even dimensional\footnote
{
\, For simplicity, we assume that $E_i$ is even dimensional so that the orientation of a product of the $E_i$ does not depend on their order.  In the Gromov--Witten situation we may always choose the $E_i$ to have a natural complex structure since the target of the linearized Cauchy--Riemann operator is a complex vector space of $(0,1)$-forms.
}
 vector spaces such that $\dim U_I - \dim E_I = d$;
\item[-] $\Ga_I = \prod_{i\in I} \Ga_i$ is a product of finite groups that acts on $U_I$, and acts by a product of linear actions  on $E_I$;
\item[-] $s_I: U_I\to E_I$ is a $\Ga_I$-equivariant map;
\item[-] the footprint map $\psi_I: s_I^{-1}(0) \to X$ induces a homeomorphism 
\begin{align}\label{eq:foot0}
\qu{s_I^{-1}(0)}{\Ga_I}\stackrel{\cong}\longrightarrow  F_I;
\end{align}
\end{itemize}
\item {\bf (coordinate changes)} if  $I\subset J$   there is a coordinate change $\Hat\Phi_{IJ}:  \bK_I\to \bK_J$ given by the following data,
where we identify $E_I$ as a subspace of $E_J$ in the obvious way:
 \begin{itemize}\item[-] 
 a relatively  open, $\Ga_J$-invariant subset $\TU_{IJ}$ of $s_{J}^{-1} (E_I)\subset U_J$ containing $s_J^{-1}(0)$ and with a free action of $\Ga_{J\less I}$, 
 \item[-] 
a covering map $\rho_{IJ}: \TU_{IJ}\to U_I$ that quotients out by the (free) action of $\Ga_{J\less I}$ and is equivariant with respect to the projection $\Ga_J\to \Ga_I$, further 
\begin{align*}%\label{eq:foot2}
& s_I\circ \rho_{IJ} = s_J, \quad  \psi_J  = \psi_I\circ \rho_{IJ}  \mbox { on }  s_J^{-1}(0)\subset \TU_{IJ},
 \end{align*}
\item[-]   if $I\subset J \subset K$, then 
\begin{align}\label{eq:cocyl} 
& \rho_{IK} = \rho_{IJ}\circ \rho_{JK} \mbox { whenever  both sides are defined}
\end{align}
\item[-]   in an {\bf atlas} (rather than a weak atlas) we require in addition that the domain $\rho_{JK}^{-1}(\TU_{IJ})\cap \TU_{JK}$
of $ \rho_{JK}\circ \rho_{IJ}$  is a subset of the domain $\TU_{IK}$ of $\rho_{IK}$.  
\item[-]   in a {\bf tame
atlas} we require that both sides of \eqref{eq:cocyl} have the same domain and that
$\TU_{IJ} = s_J^{-1}(E_I)$.
\end{itemize}
%where above we have identified $E_I$ as a subspace of $E_J$ in the obvious way.
\item {\bf (equivariant submersion condition)}\  for each $I\subset J$,  each point $x\in \TU_{IJ}\subset U_J$ has a product neighborhood that is compatible with the section $s_{J\less I}$;  more precisely for each such $x$ with stabilizer subgroup
$\Ga_x\subset \Ga_I$, there is a $\Ga_x$-equivariant local homeomorphism
of the form
\begin{align}\label{eq:subm0}
\phi_x^E: (E_{J\less I,\de}\times W_x,  \{0\}\times W_x) \to (U_J, \TU_{IJ}),
\end{align}
 where $E_{J\less I,\de}$ is a $\de$-neighborhood of $0$ in $E_{J\less I}$  and $W_x$ is a $\Ga_x$-invariant neighborhood of $x$ in $\TU_{IJ}$,
 such that
%where $W_x\subset  \TV_{IJ}$ that is compatible with the section $s_J$; 
\begin{align*}%\label{eq:subm1}
&s_{J\less I}\circ \phi_x^E(e,y) =  e,\qquad e\in E_{J\less I,\de}.
\end{align*}
\end{itemlist}

\begin{rmk}\rm \label{rmk:submer} (i)  Although the  submersion axiom in \cite{P} does not assume equivariance, this is needed in our set-up  in order that $M$ support an action of $\Ga_A$.   Notice that because $\Ga_{J\less I}$ acts freely on $\TU_{IJ}$, the stabilizer $\Ga_x$ of $x\in \TU_{IJ}$ lies in the subgroup $\Ga_I$ of $\Ga_J\cong \Ga_I\times \Ga_{J\less I}$.  The standard proof of  the submersion axiom for Gromov--Witten moduli spaces  adapts easily to yield $\Ga_x$-equivariance
because it is an application of the gluing theorem at the stable map $x$.      The process of gluing depends on various choices, for example of Riemannian metrics and of the 
complement to the image of the linearized Cauchy--Riemann operator at $x$, and  these can always be chosen  invariant under the finite stabilizer subgroup of $x$.  This equivariance is built into the smooth index condition, since the latter is expressed in terms of the equivariant section maps $s_{J\less I}$. \MS 

\NI (ii)   {\bf (The smooth case)}\
In this case the manifolds $U_I$ are assumed to be smooth, all structural maps (the group action on $U_I$, the section $s_I$, and coordinate changes $\rho_{IJ}$) are smooth, and  the submersion axiom is replaced
by % also respects the smooth structures.  This assumption is equivalent to %usually expressed by 
the requirement that $\TU_{IJ}$ be a submanifold of $U_J$
 such that 
\begin{align}\label{eq:prod2}& \mbox{the derivative of $s_{J\less I}:U_J\to E_{J\less I}$  induce  an isomorphism}\\  \notag& \mbox{from the %}\\ \notag
%&&\quad\quad \mbox{
normal bundle of $\TU_{IJ}$ in $U_J$ to $E_{J\less I}\times \TU_{IJ}$.}
\end{align} 
%Since $s_{J\less I}$ is $\Ga_{J\less I}$-equivariant, the equivariance requirement is now automatic.  
In this case we claim that each of the maps $\tau_{IJ}$ in Proposition~\ref{prop:M} can be chosen to be a local diffeomorphism onto its image,  so that  $M$ is a smooth manifold
if the isotropy is trivial, and otherwise is a smooth branched manifold. The construction of such $M$ is sketched in Remark~\ref{rmk:sm2}. 
\MS

\NI (iii)   {\bf (Orientations)}\  We will consider an atlas to be oriented if each domain $U_I$ (resp. each obstruction space $E_I$) has a
$\Ga_I$-invariant  orientation that is 
 respected by the coordinate changes.   In fact, in the current situation, since we have assumed that the $E_i$ are all even dimensional (e.g. that they are all complex vector spaces), then  if they are also  invariantly oriented the $E_I$ inherit natural orientations, and  the local product structure given by the submersion condition permits the transfer of an orientation between charts.    In the smooth case, a slightly more general notion of orientation is discussed extensively
in \cite{MW2,MWiso}.  \hfill$\er$
\end{rmk}

 We now briefly recall some other terminology that will be useful later.
An atlas $\Kk' = \bigl(\bK_I', \Hat\Phi_{IJ}'\bigr)$ is a {\bf shrinking} of $\Kk= \bigl(\bK_I, \Hat\Phi_{IJ}\bigr)$ if 
 \begin{itemize}\item[-]  it has the same 
index set $\Ii_\Kk$, obstruction spaces $E_I$ and groups $\Ga_I$, 
\item[-] 
 each chart domain $U_I'$ is a precompact subset of $U_I$, denoted $U_I'\sqsubset U_I$ ,
\item[-]  the coordinate changes are given by restriction.
 \end{itemize}
 For short, in this situation we write
 \begin{align}\label{eq:Uu}
 \Uu'\sqsubset \Uu, \;\; \mbox{ where }\;\;  \Uu': ={\textstyle  \bigsqcup_{I\in \Ii_\Kk}  U'_I, \;\; \Uu: = \bigsqcup_{I\in \Ii_\Kk}  U_I}.
 \end{align}
It is shown in \cite[\S3.3]{MW1} and \cite[\S2.5]{MWiso}  that every weak atlas has a tame shrinking $\Kk'\sqsubset \Kk$ that is unique up to a natural equivalence relation  called concordance.  A tame atlas $\Kk$ is called {\bf preshrunk} if  there is a double shrinking $\Kk\sqsubset \Kk'\sqsubset \Kk''$ such that both $\Kk$ and $\Kk'$ are tame.

Each atlas\footnote
{
\, The extra assumption in the definition of atlas stated just after \eqref{eq:cocyl} implies that the set $\Mor_{\bB_\Kk} $ defined below  is closed under composition.
}
 $\Kk$ determines a topological category $\bB_\Kk$ with 
\begin{align}\label{eq:Kcat}
&{\textstyle  \Obj_{\bB_\Kk} = \bigsqcup_{I\in \Ii_\Kk} U_I,  \quad \Mor_{\bB_\Kk} = \bigsqcup_{I\subset J} \TU_{IJ}\times \Ga_I,}
\vspace{.07in}\\ \notag
&  \quad s\times t:   \Mor_{\bB_\Kk} \to  \Obj_{\bB_\Kk} \times  \Obj_{\bB_\Kk} ,\;\; \\ \notag
& \quad (I,J,y,\ga)\mapsto \bigl( (I, \ga^{-1} (\rho_{IJ} (x)), \, (J,y)\bigr).
\end{align}
We denote by $|\Kk|: = |\bB_\Kk|$ its (geometric or naive) realization.  Thus
$$
{\textstyle   |\bB_\Kk|: = \bigsqcup_I U_I/\!\sim, }
$$
where $\sim$ is the equivalence relation on $ \Obj_{\bB_\Kk}$ that is generated by the morphisms, i.e.
$
(I,x) \sim (J,y)$ if and only if there is a chain of morphisms
$$
(I,x) = (I_0, x_1) \to (I_1,x_1)\leftarrow (I_2,x_2) \to \dots  \leftarrow (I_k,x_k) = (J,y).
% \Longleftrightarrow \exists %\Mor_{\bB} \bigl((I,x) , (J,y)\bigr) \ne \emptyset.
$$
Though for a general atlas the 	quotient topology is nonHausdorff,  it is shown in \cite[Thm~3.1.9]{MW1} (see also \cite[\S2.5]{MWiso}) that if $\Kk$ is preshrunk and tame the quotient topology is Hausdorff and the natural maps
\begin{align}\label{eq:piK}
\pi_\Kk:U_I\to |\Kk|
\end{align}
induce  homeomorphisms from $U_I/\Ga_I$ onto their images.  Further, 
%if $\Kk$ is also {\bf preshrunk} (i.e. there is a double shrinking $\Kk\sqsubset \Kk'\sqsubset \Kk''$ where both $\Kk$ and $\Kk'$ are tame), 
the quotient topology on $|\Kk'|$ restricts to a metrizable topology on $|\Kk|$ that agrees with the quotient topology on each set $\pi_\Kk(U_I)$.    We will say that $\Kk$ is {\bf good}  if its realization $|\Kk|$ has these  properties.\footnote{
\, The proof given in \cite{MW1} that  preshrunk and tame atlases are good is abstract, i.e. the argument only uses properties of the 
objects and maps in the category $\bB_\Kk$.  However, because  the atlas domains are often constructed as subsets  of an ambient  Hausdorff metrizable space  $\Ss$ (such as a space of  stable maps),  one can sometimes   use the existence of $\Ss$ to
 bypass some of the arguments in \cite{MW1}.}

\NI{\it From now on we assume that $\Kk$ is good in this sense, e.g. preshrunk and tame.}
\MS

There is a similar category $\bE_\Kk$ formed by the obstruction bundles with 
\begin{align*}%\label{eq:KEcat}
&{\textstyle  \Obj_{\bE_\Kk} = \bigsqcup_{I\in \Ii_\Kk} U_I\times E_I,  \quad \Mor_{\bB_\Kk} = \bigsqcup_{I\subset J} \TU_{IJ}\times E_I\times  \Ga_I,}
\vspace{.07in}\\ \notag
&  \quad s\times t:   \Mor_{\bE_\Kk} \to  \Obj_{\bE_\Kk} \times  \Obj_{\bE_\Kk} ,\;\; \\ \notag
& \quad \bigl(I,J,y,e,\ga\bigr)\mapsto \Bigl( \bigl(I, \ga^{-1}( \rho_{IJ} (y), e)\bigr), \, \bigl(J,y, e\bigr)\Bigr).
\end{align*}
The projections $\pr_I: U_I\times E_I\to U_I$, sections $s_I$ and footprint maps $\psi_I$ fit together to give functors
$$
\pr:  \bE_\Kk \to \bB_\Kk, \quad  \s: \bB_\Kk \to \bE_\Kk, \quad \psi: \s^{-1}(0) \to \bX,
$$
where $\bX$ is the category with objects $X$ and only identity morphisms, and one can show that
$\psi$ induces a homeomorphism $|\psi|: |\s|^{-1}(0) \to X$.

\MS

\NI {\bf Reductions and  zero sets}

The situation when all the obstruction spaces $E_I$ vanish is considered in \cite{Morb}. In this case, the category $\bB_\Kk$ is
\begin{itemize}\item  {\bf \'etale},  i.e. the object and morphism spaces are manifolds, and the  source and target maps are local homeomorphisms, and 
\item {\bf  proper}, i.e. the equivalence relation $\sim$ on the object space generated by the morphisms is closed.\footnote
{\,
If $\Obj_\bB$ is a separable, locally compact, metric space (as is the case for the categories considered in this paper), then this properness condition implies that 
the realization $|\bB|$ is Hausdorff; for a proof see \cite[Lemma~3.2.4]{MW1}.  If in addition $\bB$ is a groupoid, then this condition is equivalent to the more standard requirement that 
the map $s\times t: \Mor_{\bB} \to  \Obj_{\bB} \times  \Obj_{\bB}$ is proper.}
\end{itemize}  
Moreover by \cite[Prop.2.3]{Morb}  it has a natural completion to a ep (\'etale, proper) groupoid $\Hat\bB_\Kk$ (i.e. a category in which all morphisms are invertible) that also has realization $|\Kk|$.  Thus $\Hat \bB_\Kk $ provides an orbifold structure on $|\Kk|$.

If the obstruction spaces do not vanish, then the manifolds $U_I$ have varying dimensions.  However, if $\nu_I: U_I\to E_I$ is a perturbation section such that $s_I + \nu_I: U_I\to E_I$ is transverse to $0$, then the perturbed zero set $Z_I: = (s_I + \nu_I)^{-1}(0)$ has fixed dimension $d$. 
Hence, as is shown in \cite[Lemma~7.2.7]{MW2}, if the isotropy groups vanish and if we can choose the  $\nu_I$ compatibly, i.e. they form a functor
$$
\nu: \bB_\Kk\to \bE_\Kk,
$$
then  these zero sets fit together to form a manifold.
However, in general the domains $U_I$ overlap too much for there to be such a functor.\footnote
{
\, See the beginning of \cite[\S7.1]{MW1}. The relation between $\Uu$ and its reduction $\Vv$ is similar to that
between the cover of a simplicial space by the stars  of its vertices and the cover by the stars of its first barycentric subdivision.
In particular, though the footprints $(F_i)_{1\le i\le N}$ of the basic charts cover $X$, the corresponding sets $(G_i: = F_i\cap |V_i|)_{1\le i\le N}$ are disjoint and do not form a cover: see Figure~\ref{fig:fund1}.
}  

We deal with this by passing to a {\bf reduction}
$\Vv$, i.e. a family of  $\Ga_I$-invariant, precompact open subsets $V_I\sqsubset U_I$ with the 
following properties:
\begin{align}\label{Vv} 
&\bullet\quad \mbox{the  footprints } \bigl(G_I: = \psi_I(V_I\cap s_I^{-1}(0))\bigr)_{I\in \Ii_\Kk}\; \mbox{ cover } X, \hspace{.5in}\\ \notag  
&\bullet\quad  \pi_\Kk(\ov V_I) \cap \pi_\Kk(\ov V_J) \ne \emptyset \mbox{  only if } I\subset J \mbox{ or } J\subset I, \hspace{.5in}
\end{align} 
where $\pi_\Kk: U_I\to |\Kk|$ is the projection in \eqref{eq:piK}.
In  the construction given in \cite[\S7.3]{MW2} for the trivial isotropy case, we define the perturbation section as a functor 
\begin{align*}%\label{eq:nu}
\nu: \bB_\Kk\big|_\Vv\to \bE_\Kk\big|_\Vv
\end{align*} 
on  the full subcategory
$\bB_\Kk\big|_\Vv$ of $\bB_\Kk$ with objects $\bigsqcup_I V_I$.

  If the isotropy groups are nontrivial then it is (in general) no longer possible to choose a transverse equivariant section $\nu$,
 even on a reduction $\Vv$.
 However, because the morphisms in $ \bB_\Kk\big|_\Vv$ are described so explicitly, we show in \cite[Proposition~3.3.3]{MWiso} that
 we may construct the perturbation section as a (single valued) functor
 $$
\nu:  \bB_\Kk\big|_\Vv^{\less \Ga} \to  \bE_\Kk\big|_\Vv^{\less \Ga},
$$
where 
$ \bB_\Kk\big|_\Vv^{\less \Ga}$ is the (non full, non proper) subcategory of $\bB_\Kk\big|_\Vv$   obtained by discarding the morphisms coming explicitly  from the group actions.
Thus
\begin{align}\label{eq:TVIJ}
\Mor_{ \bB_\Kk\big|_\Vv^{\less \Ga}} & ={\textstyle  \bigsqcup_{I\subset J}} \TV_{IJ}\quad \mbox{where } \;\;
  s\times t: \; (I,J, y)\mapsto \bigl( (I,\rho_{IJ} (y)), (J,y)\bigr) \\ \notag
  & \qquad \qquad \qquad \qquad \mbox{ and }  \TV_{IJ} = V_J\cap \rho_{IJ}^{-1}(V_I) \subset \TU_{IJ}.
  \end{align}
We show in \cite[Theorem~3.2.8]{MWiso} that
if $(s + \nu)\pitchfork 0$, the full subcategory of  $ \bB_\Kk\big|_\Vv^{\less \Ga}$ with objects  $$
{\textstyle \bigsqcup_I} \bigl(Z_I: = (s_I + \nu_I)^{-1}(0)\bigr), \quad \mbox{ and \;\; weight}(Z_I) = 1/{|\Ga_I|},
$$
can be completed to a weighted  \'etale groupoid  whose realization is therefore a weighted  branched manifold
as defined  in \S\ref{ss:br}.
We will see below that in the current context the branched manifold structure of $M$ appears in a similar way.

\MS

\subsection{The weighted branched manifold  $(M, \La)$}\label{ss:br}

We will construct $M$ from the realization of an   \'etale category $\bM$ whose objects are thickened versions of the domains $V_I$ of a reduction $\Vv$ of the atlas $\Kk$, and whose morphisms have exactly the same structure as those in the category $\bB_\Kk|_\Vv^{\less \Ga}$  defined in \eqref{eq:TVIJ}.
In particular,  in general $\bM$ is not proper, so that its realization $|\bM|$ is not   Hausdorff  but rather branches along  its  locus of nonHausdorff points 
(think of two copies of a circle attached long a subarc.)

We begin with some relevant definitions from \cite{Mbr}.  If $\bG$ is a wnb groupoid as described below,  its realization $|\bG|$ with the
quotient topology is in general not Hausdorff.  Hence we consider its  maximal Hausdorff quotient $|\bG|_{\Hh}$, which has the following universal property: any continuous map from $|\bG|$ to a Hausdorff space factors through $|\bG|_{\Hh}$.
In the following we write $|\bG|$ for the realization $\Obj_{\bG}/\!\!\sim$ of an \'etale  groupoid $\bG$, and $|\bG|_\Hh$ for its maximal Hausdorff quotient.\footnote
{\, The appendix to \cite{MWiso}  gives succinct proofs of the results we use; in particular, the existence of   $|\bG|_{\Hh}$ is established in \cite[Lemma~A.2]{MWiso}. Lemma~\ref{le:Hcomplet} gives an explicit description of $|\bG|_{\Hh}$ in the case we need here.}
We denote the natural maps by
$$
\pi_\bG:\Obj_{\bG}\to |\bG|,\qquad   \pi_{|\bG|}^\Hh: |\bG|\longrightarrow |\bG|_\Hh, \qquad 
\pi^\Hh_\bG:= \pi_{|\bG|}^\Hh\circ \pi_\bG : \Obj_{\bG}\to |\bG|_\Hh .
 $$

 \begin{defn}[\cite{Mbr},Def.~3.2]\label{def:brorb}
A   {\bf weighted nonsingular branched groupoid}  (or {\bf wnb groupoid)} of dimension $d$ is a pair $(\bG,\La_{\bG})$ consisting of a nonsingular\footnote
{
\, i.e. there is at most one morphism between any two objects. Further, we restrict here to rational weights, but clearly this condition could be generalized.},
 \'etale groupoid $\bG$ of dimension $d$, together with a rational weighting function $\La_{\bG}:|\bG|_{\Hh}\to \Q^+: = \Q\cap (0,\infty)$ that satisfies the following compatibility conditions.
For each $p\in |\bG|_{\Hh}$ there is an open neighborhood $N\subset|\bG|_{\Hh}$ of $p$, a collection $U_1,\dots,U_\ell$ of disjoint open subsets of $(\pi_{\bG}^{\Hh})^{-1}(N)\subset \Obj_{\bG}$ (called {\bf local branches}), and a set of positive rational weights $m_1,\dots,m_\ell$ such that the following holds: 
\SSS

\NI
{\bf (Cover) } $( \pi_{|\bG|}^{\Hh})^{-1}(N) = |U_1|\cup\dots \cup |U_\ell| \subset |\bG|$;
\SSS

\NI
{\bf (Local Regularity)}  
for each $i=1,\dots,\ell$ the projection 
$\pi_{\bG}^{\Hh}|_{U_i}: U_i\to |\bG|_{\Hh}$ is a homeomorphism onto a relatively closed subset of $N$;
\SSS

\NI
{\bf (Weighting)}  
for all $q\in N$, 
the number 
$\La_{\bG}(q)$ is the sum of the weights of the local
branches whose image contains $q$:
$$
\La_{\bG}(q) = 
{\textstyle \sum_{i:q\in |U_i|_{\Hh}} m_i.}
$$
\end{defn}

Now we can formulate the notion of a weighted branched manifold.\footnote
{\, In distinction to \cite{Mbr} and \cite{MWiso}, we will not assume from the outset that  a weighted branched manifold is oriented, since there is no need for this hypothesis until it comes to considering the fundamental class.    Analogous definitions for cobordisms may be found in 
 \cite[App.~A]{MWiso}.}

\begin{defn}\label{def:brman} 
A {\bf weighted branched manifold} of dimension $d$ is a pair $(Z, \La_Z)$ consisting of a topological space $Z$ together with a function $\La_Z:Z\to \Q^+$ and an equivalence class\footnote{
\, The precise notion of equivalence is given in \cite[Definition~3.12]{Mbr}. In particular it ensures that the induced function $\La_Z: = \La_\bG\circ f^{-1}$ and the dimension of $\Obj_{\bG}$ is the same for equivalent structures $(\bG,\La_\bG, f)$. 
} 
of tuples $(\bG, \La_\bG, f)$, where $(\bG,\La_\bG)$  is a   $d$-dimensional wnb groupoid  and $f:|\bG|_\Hh\to Z$  is a homeomorphism that induces the function $\La_Z: = \La_{\bG}\circ f^{-1}$.
\end{defn}

We define the weighted branched manifold $(M,\La)$ of Theorem~A as the realization of a category $\bM$ constructed as follows.
First choose a $\Ga_i$-invariant norm $\|\cdot \|$  on each $E_i$, and  for any $J\subset A$ give  the vector space $E_J: =  \prod_{i\in J}E_i$  the sup norm
$$
\|e_J\| = {\textstyle \sup_{i\in J}} \|e_i\|.
$$
Further, denote
\begin{align}\label{eq:Eeps}
E_{J,\eps}: = \{e_J\in E_J \ | \ \|e_J\|<\eps\},
\end{align}
and 
\begin{align}\label{eq:ve}
\ve: =(\eps_I)_{I\in \Ii_\Kk},\quad  \mbox { where } &  I\subsetneq J \Longrightarrow 0<\ka\, \eps_I< \eps_J,\\ \notag
&\qquad \qquad \qquad  \mbox { for }  \ka: = \max \{ |J| \ | \ J\in \Ii_\Kk\}.
\end{align}
Given  a reduction $\Vv$ of an atlas $\Kk$ as in \eqref{Vv}, for each $I\subset J$
we denote
\begin{align}\label{eq:VIJ}
V_{IJ} = V_I\cap \pi_\Kk^{-1}(\pi_\Kk(V_J))\subset V_I,\quad  \TV_{IJ} = V_J\cap \pi_\Kk^{-1}(\pi_\Kk(V_I)) \subset V_J,
\end{align} 
where $\pi_\Kk:V_I\to |\Kk|$ is the obvious projection.   Thus $\rho_{IJ}(\TV_{IJ}) = V_{IJ}$.
Observe also  that the group $\Ga_A$ acts on $E_{A\less J,\eps_J} \times V_J$
by 
\begin{align}\label{eq:GaA}
\ga\cdot (e,x) =  \bigl(\ga|_{A\less J}(e), \ga|_J (x)\bigr),\quad \ga \in \Ga_A,
\end{align}
where $\ga|_J$ denotes the projection of $\ga\in \Ga_A: = \prod_{i\in A} \Ga_i$ to $\Ga_J:= \prod_{i\in J} \Ga_i$.
\MS

The following result is  the key step in the proof of Theorem~A.  A more precise version is stated and  proved in Proposition~\ref{prop:M1} below.
% compare with \eqref{eq:TVIJ} above.    

 \begin{prop}\label{prop:M} 
Let $\Kk$ be a good  atlas on $X$ of dimension $d$.   Then there is a reduction $\Vv$ and choice of constants $\vd>0$ 
such that the following holds. 
\begin{itemlist}\item [{\rm (i)}] 
There is    an  \'etale category $\bM$  of dimension $D: = d+\dim E_A$ with
\begin{align}\label{eq:Mm}
\Obj_{\bM} &= {\textstyle  \bigsqcup_{I\in \Ii_\Kk} } M_J: = E_{A\less J,\de_J} \times V_J, \qquad 
\Mor_{\bM} ={\textstyle \bigsqcup_{I\subset J,\, I,J \in \Ii_\Kk} \TM_{IJ} } \\ \notag
s\times t: &\;\;\Mor_{\bM}\to \Obj_{\bM}\times \Obj_{\bM},\quad (I,J,y)\mapsto \Bigl(\bigl(I,\tau_{IJ}(y)\bigr), \bigl(J,y\bigr)\Bigr),
\end{align}
where  $\TM_{IJ}\subset M_J$ is an open $\Ga_A$-invariant 
subset containing $\{0\}\times \TV_{IJ}$ whose closure $cl(\TM_{IJ})$ is disjoint from $cl(\TM_{HJ})$ unless $I\subset H$ or $H\subset I$,
and  the map  %that supports a free action of $\Ga_{J\less I}$  and 
\begin{align*}%\label{eq:tau}
\tau_{IJ}: \TM_{IJ} \to M_{IJ}: = E_{A\less I,\de_I} \times V_{IJ} \subset M_I
\end{align*}
is a $\Ga_A$-equivariant covering map onto $M_{IJ}\subset M_I$ such that 
\begin{itemize}\item[-] 
$\tau_{IJ}$ restricts to $\rho_{IJ}$ on $\{0\}\times \TV_{IJ}$; 
\item[-] if $H\subset I\subset J$ then   $\tau_{HJ} = \tau_{HI}\circ \tau_{IJ}$ on $\TM_{IJ}\cap \TM_{HJ}$; and
\begin{align}\label{eq:clgraph}
{\rm graph \,} \tau_{IJ}\subset M_I\times M_J\ \;\; \mbox{ is closed.}
\end{align}
\end{itemize}

\vspace{.05in} % .  

\item [{\rm (ii)}]  
$\bM$ supports an action of  $\Ga_A$  by \eqref{eq:GaA} on objects, and  by
\begin{align*}%\label{eq:Gaact}
%& \de: \bigl(I,(e,x)\bigr)\mapsto \bigl(I,\, (\de|_{A\less J}^{-1}e, \de|_J^{-1}x)\bigr),\quad y=(e,x)\in E_{A\less J,\eps_J} \times V_J,\\
%\notag
& \bigl(I,J, y \bigr)\mapsto \ga\cdot (I,J,y): = \bigl(I, J,\, \ga^{-1}\cdot y\bigr),\quad \ga\in \Ga_A,\; y\in \TM_{IJ},
\end{align*}
on morphisms.
\item  [{\rm (iii)}]   There is a $\Ga_A$-equivariant functor $\Sss: \bM \to \bE_A$, where the category  $\bE_A$ has objects $E_A$ and only identity morphisms, % morphisms given by the action of $\Ga_A$, 
that is given on objects  by maps $\Sss_J: M_J\to E_A$  such that  
%of the  following form
%where
\begin{align}\label{eq:sJ}
&\Sss_J(0, x) = s_J(x),\quad \Sss_J^{-1}(E_J)\subset \{0\}\times V_J
% \Sss_J(e_{A\less J}, x) & = 
% \bigl(\la_J(e_{A\less J}, x) e_{A\less J}, \Sss_{J,2}(e_{A\less J}, x)\bigr) \in E_{A\less J}\times E_J, \quad \mbox{ where  } \\ \notag
%% (\Sss_J)^{-1}(0) & = \bigl\{(0,x)\in E_{A\less J}\times V_J: s_J(x) = 0\bigr\},
%\la_J(e_{A\less J}, x)  \; &\in \; (0,\infty) \subset \R,
\end{align}
so that 
$$
(\Sss_J)^{-1}(0)  = \bigl\{(0,x)\in E_{A\less J}\times V_J: s_J(x) = 0\bigr\}.
$$
%where for $I\subset J$ we denote $\Sss_{J|J}(e_{A\less J}, x) := \pr_{E_J}\circ \Sss_J(e_{A\less J}, x) $.
%In particular, $\Sss_J^{-1}(0) = \{0\}\times s_J^{-1}(0)\subset  \{0\}\times V_J$.
\end{itemlist}
\end{prop}

The following result explains the construction  and properties of the weighted branched manifold $(M,\La)$  mentioned in Theorem~A.
Note that  $\Sss$ denotes a functor $\bM\to \bE_A$, while $\Sss_M:M\to E_A$ is the corresponding function on $M$.

\begin{thm}\label{thm:M}   \begin{itemlist}\item[{\rm (i)}] The category $\bM$ constructed in Proposition~\ref{prop:M} has a unique completion to a  wnb groupoid $\Hat{\bM}$  
with the same objects as $\bM$ and the same realization
$|\Hat{\bM}| = |{\bM}|$. 
\item[{\rm (ii)}]  If we denote the composite  $\Obj_{\bM}\to |\bM|\to |\Hat{\bM}|_\Hh$ by $y\mapsto |y|\mapsto
\pi_{\bM}^\Hh(|y|)$, the function $\La: M: = |\Hat{\bM}|_\Hh\to \Q^+$ defined by
$$
\La(p) := \tfrac 1{|\Ga_I|}\,\cdot  \# \bigl\{ y\in M_I \,\big|\, \pi_{\bM}^\Hh(|y|) =p \bigr\} \qquad \text{for}\;\; p\in |M_I|_\Hh
$$
is a weighting function 
that gives
% on the Hausdorff quotient of the perturbed zero set $|\bZ^\nu|_\Hh$. Together, these give
  $(M, \La)$ the structure of a  weighted branched manifold.
  \item[{\rm (iii)}]  The group action by $\Ga_A$ and functor $\Sss$  extend to $\Hat\bM$, so that there is a $\Ga_A$-equivariant map $\Sss_M: M\to E_A$.  
    Moreover,
%     the image 
% $|\Sss^{-1}(0)|_\Hh$ of
  the zero set  $\Sss_M^{-1}(0)$  is a compact subset of $M$, and 
  the footprint maps $\psi_I$ induce   a homeomorphism
  $$
\upsi: \;  \Sss_M^{-1}(0)/\Ga_A \stackrel{\cong}\longrightarrow X.
  $$
    \item[{\rm (iv)}]   If $\Kk$ is oriented, so are $\bM$ and $\Hat\bM$.
\end{itemlist}
\end{thm}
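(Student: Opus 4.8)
The plan is to build $\Hat\bM$, verify it is a wnb groupoid, then transfer the functor $\Sss$ and the $\Ga_A$-action to the Hausdorff realization and analyze the zero set. The starting point is Proposition~\ref{prop:M}, which provides the oriented \'etale category $\bM$ with explicitly described morphisms $\TM_{IJ}$ and covering maps $\tau_{IJ}$. The issue is that $\bM$ is only a \emph{monoid}-like category (not all morphisms are invertible, since $\tau_{IJ}$ for $I\subsetneq J$ is a genuine covering), so one must complete it. Following \cite{Mbr} and \cite[App.~A]{MWiso}, the completion $\Hat\bM$ is obtained by formally adjoining inverses and compositions: its morphisms between $(I,x)\in M_I$ and $(J,y)\in M_J$ are chains $(I,x)\leftarrow (K,z)\to (J,y)$ through a common $K\supseteq I\cup J$ in the \'etale groupoid sense, with the key point being that the reduction property \eqref{Vv} forces any such chain to factor through a single $K$. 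One shows $\Hat\bM$ is \'etale (source/target are local homeomorphisms, inherited from the fact that each $\tau_{IJ}$ is a covering and hence a local homeo), nonsingular (at most one morphism between any two objects, which uses that $\pi_\Kk(\ov V_I)\cap\pi_\Kk(\ov V_J)\ne\emptyset$ only when $I\subset J$ or $J\subset I$, together with the freeness of the $\Ga_{J\less I}$-action), and that $|\Hat\bM| = |\bM|$ since the adjoined morphisms identify no new objects. This gives part~(i).

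For part~(ii), I would check the weighting axioms of Definition~\ref{def:brorb} directly for the function $\La$. The local branches near a point $p\in|M_I|_\Hh$ are the sheets of $M_I$ and the images of the $M_J$, $J\supset I$, mapping to a neighborhood of $p$; each $\tau_{IJ}:\TM_{IJ}\to M_{IJ}$ being a $|\Ga_{J\less I}|$-fold covering means a point of $M_I$ in the image has exactly $|\Ga_{J\less I}|$ preimages in $M_J$, and the normalization weight $1/|\Ga_I|$ on the groupoid realization translates into $\La$ counting $M_I$-preimages with weight $1/|\Ga_I|$. The \textbf{(Cover)} and \textbf{(Local Regularity)} axioms follow from the openness of the $\TM_{IJ}$, the covering property of $\tau_{IJ}$, and the extension of the $\Ga_{J\less I}$-action past the closure (so that the branch closures behave well), while \textbf{(Weighting)} is the additivity of $\La$ over sheets, which is exactly how $\La$ was defined. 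Orientations are compatible because the atlas is oriented and $\bM$ was built as an oriented category in Proposition~\ref{prop:M}.

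For part~(iii): the functor $\Sss:\bM\to\bE_A$ of Proposition~\ref{prop:M}(iii) is $\Ga_A$-equivariant by construction, and since $\bE_A$ has only identity morphisms and is Hausdorff, $|\Sss|$ factors through the maximal Hausdorff quotient, giving $\Sss_M:M\to E_A$; the $\Ga_A$-action likewise descends because $M$ is Hausdorff. Equivariance of $\Sss_M$ is inherited. For the zero set: by \eqref{eq:sJ} we have $\Sss_J^{-1}(0) = \{(0,x): x\in V_J,\ s_J(x)=0\}$, so $\Sss_M^{-1}(0) = \bigcup_I \pi_{\bM}^\Hh(\{0\}\times(V_J\cap s_J^{-1}(0)))$, which via $\psi_I$ maps onto the footprints $G_I$ that cover $X$; one then checks $\upsi$ is well-defined (compatibility of the $\psi_I$ under coordinate changes, from $\psi_J = \psi_I\circ\rho_{IJ}$), injective after quotienting by $\Ga_A$ (using \eqref{eq:foot0} and the reduction property to control overlaps), surjective (the $G_I$ cover $X$), and a homeomorphism (it is a continuous bijection of compact Hausdorff spaces once we know compactness of $\Sss_M^{-1}(0)$). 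Compactness follows because $\Sss_M^{-1}(0)$ is identified with $|\s|^{-1}(0)\cong X$ for the reduced atlas, which is compact, or directly: it is a closed subset of the union of the $\pi_{\bM}^\Hh$-images of the compact sets $\ov V_I\cap s_I^{-1}(0)$.

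The main obstacle I expect is part~(i): verifying that $\bM$ genuinely completes to a \emph{nonsingular} \'etale groupoid with unchanged realization. The delicate points are (a) showing the cocycle condition \eqref{eq:cocyl}-type relations among the $\tau_{IJ}$ (inherited from $\rho_{IJ}$ via Proposition~\ref{prop:M}) are exactly what is needed for the formal composites to be well-defined and associative, so that no spurious identifications arise; and (b) the nonsingularity, which hinges on combining the reduction axiom \eqref{Vv} (overlaps only along comparable pairs $I\subset J$) with the freeness of the $\Ga_{J\less I}$-actions on $\TM_{IJ}$ and their extensions past the closures --- this is precisely where the careful choice of the reduction $\Vv$ and constants $\vd$ in Proposition~\ref{prop:M} is used. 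Once the groupoid structure is in hand, parts (ii) and (iii) are essentially bookkeeping, modulo the standard facts about maximal Hausdorff quotients recalled from \cite{Mbr,MWiso}.
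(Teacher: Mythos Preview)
Your proposal is essentially correct and follows the same approach as the paper: both treat Theorem~\ref{thm:M} as a consequence of Proposition~\ref{prop:M} together with general results about completing such categories to wnb groupoids, citing \cite{Mbr,Morb,MWiso} for the groupoid completion (part~(i)) and the weighting axioms (part~(ii)). The paper is terser, invoking \cite[Prop.~2.3]{Morb} for~(i) and \cite[Prop.~4.5]{Morb} (cf.\ \cite[Thm.~3.2.8]{MWiso}) for~(ii) without spelling out the mechanics you describe; your summary of what those results say is accurate, and you correctly identify the role of the extension of the free $\Ga_{J\less I}$-action past the closure of $\TM_{IJ}$ in securing local regularity.

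One point to sharpen: your compactness argument in~(iii) is a little loose. The direct claim that $\Sss_M^{-1}(0)$ is a closed subset of the union of the $\pi_{\bM}^\Hh$-images of compact sets $\ov V_I\cap s_I^{-1}(0)$ needs care, since the closures $\ov V_I$ are taken in $U_I$ and need not lie in $V_I$ (hence not in $M_I$). The paper instead observes that the full subcategory of $\bM$ on $\bigsqcup_I\{0\}\times V_I$ is exactly $\bB_\Kk|_\Vv^{\less\Ga}$, so compactness of $\Sss_M^{-1}(0)$ reduces to compactness of the realization of the zero set in that category, which is the content of \cite[Thm.~5.2.2]{MW1}. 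You should route the argument through that identification rather than the ad hoc closure argument.
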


The  category $\bM$ has the same structure as the category $\bZ^\nu$ considered in \cite[Thm.~3.2.8]{MWiso}, formed by the perturbed zero set of the atlas $\Kk$; and the proof of Theorem~\ref{thm:M} (which is given in  \S\ref{ss:topol})
is essentially the same as the corresponding result for $\bZ^\nu$. Condition~\eqref{eq:clgraph} that $\tau_{IJ}$ has closed graph is automatically satisfied
 in the case of
$\bZ^\nu$, and is an important ingredient of the analysis of the branching structure of $M$. For example, if  the isotropy groups are trivial, then 
the maps $\tau_{IJ}$  are homeomorphisms onto their images, and Lemma~\ref{le:complet} implies that the only morphisms in the groupoid completion $\Hat{\bM}$ are those given by the $\tau_{IJ}$ and their inverses.  Hence,  condition in \eqref{eq:clgraph} implies that the equivalence relation on $\Obj_{\bM}$ has closed graph, so that the quotient space $|\Hat{\bM}|$ is Hausdorff, and therefore  a manifold.
An example with nontrivial isotropy is described in Example~\ref{ex:sphere}~(IV).

\begin{proof}[Proof of Theorem~A]  This is an immediate consequence of Theorem~\ref{thm:M}.
\end{proof}

\begin{rmk} \rm   Instead of taking $M$ to be a weighted branched manifold with action of $\Ga_A$, one could add the morphisms in $\Ga_A$ to the completed category $\Hat{\bM}$ to obtain an \'etale groupoid $\Hat{\bM}\times\Ga_A$.  In general, this groupoid is not proper. However, it does inherit a weighting function and  so the realization $|\Hat{\bM}\times\Ga_A|_\Hh$ is a weighted branched orbifold $M/\Ga_A$: for an explicit example see \S\ref{ss:exam}~(VI). Note also that the action of the group $\Ga_A$ on $\bM$ only affects the fundamental class $\mu_M$ (and hence $[X]^{vir}_\Kk$)  via the weighting function $\La$ whose values depend on the groups $\Ga_I$ as well as on the category $\bM$. \hfill$\er$
\end{rmk}

 \begin{rmk}\label{rmk:out}\rm {\bf (Outline of the argument)} 
We will explain the main points of the proof of Proposition~\ref{prop:M}  in \S\ref{s:out}.  The first step is to use \lq deformation to the normal cone' (see \cite{P}) to  construct  manifolds $(Y_{\Uu,J,\,\ve})_{J\in \Ii_\Kk}$ of dimension $d + \dim E_A + |J|-1$ with a natural boundary that lies over the boundary of a simplex $\De_J$ of dimension $|J|-1$.  We next consider the open submanifold     $Y_{\Vv,J,\,\ve}\subset Y_{\Uu,J,\,\ve}$ corresponding to a reduction, and show that this has a  partial boundary collar with \lq corner control': see Proposition~\ref{prop:col}.   
Then we use the collar to construct the
covering maps $\tau_{IJ}: \TM_{IJ}\to M_I$.  Since the general definition of these maps  is quite complicated, 
we explain in Example~\ref{ex:MJ} how this  works for an atlas with just three basic charts.   
 Proposition~\ref{prop:M1} gives the general construction. 
 
   \S\ref{ss:Y}  contains technical details about compatible shrinkings, and the proof that each $Y_{\Uu,J,\,\ve}$ is a manifold.   The argument here is based on the existence of the local product structures provided by the submersion axiom.  As we show in Step 1 of the proof of Proposition~\ref{prop:col} in Lemma~\ref{le:col}, this axiom also allows one to construct local collars  that are compatible with the covering maps $\rho_{IJ}$ and with projection to the vector spaces $E_{J\less I}$.  In Step 2 of this proof we explain a standard method (described in Hatcher~\cite{Hat}) for assembling these local collars into a global collar for each $Y_{\Vv,J,\,\ve}$, and show in Step 3  how to arrange  that these collars have  the consistency properties listed in Proposition~\ref{prop:col} that are needed in the definition of the maps $\tau_{IJ}$.  This last step works under the assumption that the
 domains of the local collars are compatible with the reduction $\Vv$ and choice $\ve$ of thickening constants in a rather subtle way, which is summarized in the notion of compatible reduction $(\Vv,\ve)$ in Definition~\ref{def:compatV}.
 \hfill$\er$
\end{rmk}

\begin{rmk}\label{rmk:genl}\rm {\bf (Generalizations)}   The construction  of $\bM$ could be generalized in various ways. The argument relies in an essential way on 
 the submersion property in order to construct the collars in Proposition~\ref{prop:col}, i.e. on the
fact that along $\TU_{IJ}$ the space  $U_J$ is locally the product of the vector space $E_{J\less I}$ with the domain $U_I$.
 However, it does not use the fact that the domains  $U_I$ themselves are topological manifolds: for example, since all we want in the end is information on homology,  it would no doubt suffice if they were (locally compact, metrizable)  homology manifolds of dimension $\dim E_I + d$.    One could also consider atlases (or equivalently categories $\bB_\Kk$) whose charts are indexed by a poset more general than that given by the subsets of $A$.  However, one does need to be able to restrict attention to a subcategory such as $\bB_\Kk|_\Vv$ in which there are morphisms between the elements of two components of the domain only if the indices of those components are comparable in the given poset.  Some possible generalizations of this kind are discussed in the last section of \cite{Mnot}.
 \hfill$\er$
\end{rmk}

\begin{rmk}\label{rmk:poly} \rm  {\bf (The polyfold approach)}   If $X$ is the zero set of a Fredholm section $\s$ of a polyfold bundle $\Ee\to \Ss$ of index $d$, then one can use the fact that the realization $|\Ss|$ supports partitions of unity to give a very simple construction for a weighted branched manifold $M$ and section $\Sss$ whose corresponding relative Euler class agrees with that of $\s: \Ss\to \Ee$.    (In the applications of interest to us  $\Ss$ is a category\footnote
{
\, One can think of $\Ss$ as an infinite dimensional version of an ep groupoid, where the objects $\Obj_{\Ss}$ do not form a set but nevertheless 
the quotient $|\Ss| = \Obj_{\Ss}/\!\!\sim$ is a topological space, where $\sim$ is defined by setting $x\sim y \Longleftrightarrow \Mor_\Ss(x,y)\ne \emptyset$.  % although for each point $q\in \Obj_\Ss$ there is a set $\
} 
 whose realization is a space of stable maps with the Gromov topology: see \cite{H:sur,Hbigbook}.)
Here is a very brief outline of the construction: for full details see \cite{MWgw}. 

Given $x\in X$ with stabilizer subgroup  $\Ga_x$, choose a lift $q_x\in \Obj_{\Ss}$, and  a $\Ga_x$-invariant open neighborhood $\Oo\subset \Obj_\Ss$ 
of $q_x$ such that the map $\Oo\to |\Oo|\subset |\Ss|$ factors through a homeomorphism $\Oo/\Ga_x\stackrel{\cong}\to |\Oo|$.  Because $\s$ is Fredholm, there is
 a $\Ga_x$-equivariant  linear map $\la: E\to {\rm Sect\,}(\Ee|_\Oo)$ from a $\Ga_x$-invariant normed linear space $E$ to
 a subspace of $sc^+$-smooth sections that covers the 
 cokernel of the linearization of $\s$ at $x$.  
 It follows that there is $\eps>0$ such that the set 
 \begin{align}\label{eq:UU0}
U: = \bigl\{(e,q) \in E\times \Oo\ \big|\ \s(q) = \la(e,q), \|e\|<\eps\bigr\}
\end{align} 
is a manifold of dimension $d + \dim E$.  (The proof  involves a nontrivial amount of analytic detail that will appear in \cite{MWgw}.)  
 Choose a finite covering of the compact set $X: = |\s^{-1}(0)|$ by the footprints $\bigl(\psi_i(s_i^{-1}(0))\bigr)_{i\in A}$ of such charts $$
 \bK_i: = \bigl(U_i, E_i, \Ga_i, s_i, \psi_i\bigr),\qquad  \; s_i(e,q) = e,
 $$ 
 and let $(|\Oo_i|)_{ i\in A}$ be the associated open cover of a neighborhood of $X$ in the ambient space
 $|\Ss|$. 
 Just as in \cite{Morb}, one can use the groupoid structure of $\Ss$ to show that the $\bK_i$ form the basic charts for a tame Kuranishi atlas  
 $\Kk_{\Oo,\la}$   whose transition charts are given by tuples of composable morphisms.
 Instead of giving more detail about this construction, we will 
outline how to modify these definitions so that   the domains of the charts all have the same dimension $d+ \dim E_A$.

 First choose a  family of bump functions $(\si_i)_{i\in A}$ with $\supp \si_i\subset |\Oo_i|$ such that $$
 X = |\s^{-1}(0)| \;\subset\; \textstyle{\bigcup_i  \{x\; |\; \exists i, \si_i(x)>0\}.}
 $$
 Then choose an ordering of the elements $i\in A$ and a 
 reduction $\bigl(|\Ww_I|\bigr)_{I\in \Ii_\Kk}$  of the covering $\bigl(|\Oo_i|\bigr)_{i\in A}$ with the following properties:
 % of a neighborhood of $X = |\s^{-1}(0)| $ in $|\Ss|$.  Thus, as in \eqref{Vv} we assume that
 \begin{itemize}\item for each $I\in \Ii_\Kk$, $|\Ww_I|\subset |\Oo_I|: = {\textstyle \bigcap_{i\in I}}\; |\Oo_{i}|,$
 \item $X\subset  {\textstyle \bigcup_{I\in \Ii_\Kk}}\; |\Ww_I|$;
 \item $|\Ww_I|\cap |\Ww_J|\ne \emptyset  \;\Longrightarrow I\subset J \mbox{ or } J\subset I$;
 \item  if $i\notin J$ then $s_i\equiv 0$ on $|\Ww_J|$.
 \end{itemize}
 %Next choose an ordering of the elements $i\in A$.
 % and  a  partition of unity $(\rho_i)_{i\in A}$ subordinate to the cover 
 %$\bigl( |\Ww_i| \bigr)_{i\in A}$.  
 Then, given 
 $I = \{i_0,\dots, i_k\}$ where $i_0< i_1<\dots < i_k$, the space $M^\Ww_I$ consists of all tuples
\begin{align*}
 \Bigl\{ \bigl(e_A, q_{i_0}, \Psi_{q_{i_0} q_{i_1}}, \cdots, q_{i_k} \bigr)\  \big| \  & |q_{i_0}|\in |\Ww_I|, \ 
\Psi_{q\,q'}\in \Mor(q, q'),\;  \|e_A\|< \eps,     \\
% &\hspace{1.5in}
 & \s(q_{i_0})
 = {\textstyle \sum_j }\  \si_{i_j}(|q_{i_0}|)\ \Psi^*\bigl(\la_{i_j}(e_{i_j})(q_{i_j})\bigr) \in \Ee_{q_0} \Bigr\},
\end{align*}
 where $(q_{i_0}, \Psi_{q_{i_0} q_{i_1}},q_{1_i},  \Psi_{q_{i_1} q_{i_2}},\cdots, q_{i_k})$ is a 
 composable $k$-tuple of morphisms from a point $q_0\in \Oo_{i_0}$ to $q_k\in \Oo_{i_k}$.  
 By \cite[Thm~7.4]{Hbigbook}, we may choose the $\si_j$ so that for each $i, j\in A$ the function
 $$
\Oo_i\to  [0,1], \qquad  q \mapsto \si_j(|q|)
$$
is sc-smooth.  It follows that 
if  $\eps>0$ is suitably small,
 then, for each $I$,  $M^\Ww_I$ is a manifold of dimension $d+ \dim E_A$ with action of $\Ga_A$. Moreover, much as in \cite[Prop.2.3]{Morb}, for each $I\subset J$ one can define a $\Ga_A$-equivariant
covering map
 $$
\tau_{IJ}:  M^\Ww_J\supset \TM^\Ww_{IJ}\; \to\; M^\Ww_{IJ}\subset M^\Ww_I
 $$  
  by taking an appropriate  combination of the structural maps in $\Ss$ (such as compositions and source/target maps),  where 
  $M^\Ww_{IJ}$ (resp. $ \TM^\Ww_{IJ}$) consists of all elements in   $M^\Ww_{I}$ (resp. $ M^\Ww_{J}$) with $|q_{i_0}| \in |\Ww_I|\cap 
  |\Ww_J|$.
  This gives a category $\bM$ whose structure is precisely as described in Proposition~\ref{prop:M}.  The resulting VFC  $[X]^{vir}$ is independent of all choices, and can be shown to agree with that defined by the polyfold Fredholm section $\s: \Ss\to \Ee$.  
  
  Notice that  the equation  satisfied by the elements in  $M_I^\Ww$ involves the bump functions $\si_j$, while the equation \eqref{eq:UU0} defining the chart domains 
of the atlas  $\Kk_{\Oo,\la}$   does not.  Hence 
   the  weighted branched  manifold $(M^\Ww,\La)$ constructed above
  is not identical to the manifold obtained from the atlas $\Kk_{\Oo,\la}$ by the collaring construction described below. Nevertheless, these two constructions are closely related and, by adapting the arguments in \S\ref{ss:topol}  one can show that they define the same virtual fundamental class  $[X]^{vir}$.      For more details, see \cite{MWgw}.  
 \hfill$\er$
\end{rmk}

\subsection{Examples and list of main notations}\label{ss:exam}

We end this introduction by giving some examples.  Though these not needed for the proofs of the main results, readers unfamiliar with the description of   orbifolds  via atlases might find it useful to read at least some of  this section before proceeding further.

We begin by discussing the definition of the  relative Euler class of an oriented  vector bundle $\pi:\Ee\to W$ over a manifold that is equipped with a section $\s:W\to \Ee$
whose zero set $X: = \s^{-1}(0)$ is compact.  In particular, we explain why the method  outlined  in equation \eqref{eq:Euler} 
does compute the Euler class $e(\Ee)$ of $\Ee\to W$ when $W$ is compact and $\s\equiv 0$.  In Remark~\ref{rmk:orb}, we describe how to extend the construction to orbibundles.
Finally, we 
 show in detail how our main construction works  to calculate 
the Euler class of the tangent bundle of $S^2$, starting from the atlas  defined in  
\cite{MWiso}.  Our approach easily generalizes to the 
 football
orbifold $S^2_{p,q}$, which is $S^2$ with orbifold points of orders $p,q$ at the two poles.  
\MS

Let $\pi:\Ee \to W$  be an oriented, vector bundle over the manifold $W$, together with a section $\s:W\to \Ee$ with compact zero set $X\subset W$.  As always (see Remark~\ref{rmk:submer}~(iii)), we suppose that $\Ee$ has even rank to avoid problems with orientation.\footnote
{
\, Of course, over $\Q$ the Euler class vanishes for bundles of odd rank anyway.}
We build a (Kuranishi) atlas whose charts are defined using tuples 
$$
(\Oo, E, \tau, s),
$$
where 
\begin{itemize}\item
$\Oo\subset W$ is open, 
\item
$E$ is an even dimensional, oriented  vector space, 
\item $\la: E\times \Oo\to \Ee|_\Oo$ is a surjective orientation-preserving bundle homomorphism over $\id_\Oo$, and 
\item $\la$ pushes $s:\Oo\to E$  forward to $\s|_\Oo$, i.e. $\la(s(x), x)) = \s(x)\in \Ee|_x, \;\;\forall x\in \Oo$.
\end{itemize}
Given such a tuple, the corresponding chart  
$$
\bK: = (U, E, s, \psi), \quad\mbox{ with footprint}\;\; F,
$$
is defined by setting
$$
U = \bigl\{(e,x) \in E\times \Oo \ | \ \la(e,x) = \s(x)\bigr\}, \quad s(e,x) = e, \quad  \psi(0,x)\mapsto x\in X.
$$ 
One obtains an atlas as defined in \S\ref{ss:defs} by taking the basic charts to be a finite family $\bigl(\bK_i\bigr)_{i=1,\dots A}$ of charts of this form whose footprints $(F_i)$ cover the compact set $X = \s^{-1}(0)$, and the transition charts $\bigl(\bK_I\bigr)_{I\in \Ii_\Kk}$ to be the corresponding charts 
$(U_I, E_I, s_I, \psi_I)$ with footprints $F_I: = \bigcap_i F_i$ that are formed just as above but now with 
$E_I = \prod_{i\in I} E_i, \la_I = \sum_{i\in I} \la_i$. In particular,
$$
 U_I = \bigl\{ \bigl((e_i),x\bigr) \in E_I\times \Oo_I\ \big| \  \sum \la_i(e_i,x) = \s(x)\bigr\},\quad\mbox{ where } \;  \Oo_I: = {\textstyle \bigcap_{i\in I}} \Oo_i.
 $$
This gives an atlas in which the coordinate changes $\bK_I\to \bK_J$ are given by the obvious identifications
$$
\TU_{IJ}: = \bigl\{ (e,x)\in U_J \ \big| \ e\in E_I, \ x\in \Oo_J \bigr\} \stackrel{\cong} \to U_{IJ} = \bigl\{ (e,x)\in U_I \ \big| \ x\in \Oo_J\bigr\}.
$$
To see that the submersion condition holds,
choose for each $I$ a right inverse $\si_I: \Ee|_{\Oo_I}\to E_I\times \Oo_I$ to $\la_I$, so that  $\la_I\circ \si_I = \id$,
and define
$$
\Ee'_{J\less I} = \bigl\{ \bigl(e' -\si_I(\s(x)), x\bigr)\ \big| \ e' \in E_{J\less I}, x\in \Oo_{IJ}\bigr\} \subset E_J\times \Oo_J.
$$
Then 
$\Ee'_{J\less I}$ is an affine subbundle of $E_J\times \Oo_J\to \Oo_J$, and we may identify $U_J$ with the pullback of 
$\Ee'_{J\less I}$ to $\TU_{IJ}$ by the projection $\TU_{IJ}\to U_J, \; (e,x)\mapsto x.$
\MS

Since there is such an atlas for every collection of  charts $\bK$ whose footprints cover $X$, 
any two such atlases $\Kk^0, \Kk^1$  are {\bf directly commensurate}, i.e. there is an atlas $\Kk$ whose  charts include those of $\Kk^0$ and $\Kk^1$.  Therefore  $\Kk^0, \Kk^1$ are cobordant by \cite[\S6.2]{MW2}.  Hence, they define 
cobordant manifold models $(M, E_A, \Sss)$ by Theorem~A and the same class $[X]^{vir}_\Kk$ byTheorem~B.
\MS

If the bundle $\Ee\to W$ is smooth, then we can define the VFC either as in the proof of Theorem~B given in \S\ref{ss:topol}, or via an inverse limit of the homology classes 
of the zero sets of a family of perturbed sections $\s + \nu_k$ of $\Ee\to W$. As explained in the proof of Theorem~B, these two approaches give the same answer. If $W$ is just a topological manifold, it is of course easiest to represent the Euler class by starting with an atlas with just one basic chart (and hence just one chart).  In this case, our general method  of building an atlas gives  the tuple described in \eqref{eq:Euler}.  We now show that
 if $\s\equiv 0$ so that $X = \s^{-1}(0) = W$ is a compact manifold,  then $[X]^{vir}_\Kk$ as defined in  \eqref{eq:Xvir}
  is Poincar\'e dual to the usual  Euler class $e(\Ee)\in  H^{2k}(X;\Z)$, where 
$2k = {\rm rank\,} \Ee$.  In the following lemma, we use simplicial (co)homology instead of the \c{C}ech theory discussed in \S\ref{ss:app},  since all spaces are manifolds, and take coefficients $\Z$ since the isotropy is trivial.  

\begin{lemma}\label{le:Euler}  If $\Ee\to X$ is an oriented $2k$-dimensional vector bundle over an oriented  $(2k+d)$-dimensional manifold $X$ with $\s\equiv 0$ and atlas $\Kk$ as above, then
$$
[X]^{vir}_\Kk = \mu_X \cap e(\Ee) \in  H_d(X),
$$
where $\mu_X$ is the fundamental class of $X$ and $e(\Ee)\in H^{2k}(X;\Z)$ is the  Euler class of $\Ee$.
\end{lemma}
\begin{proof}  By Theorem~B and the above remarks, it suffices to calculate $[X]^{vir}_\Kk$ using an  atlas with one chart as in \eqref{eq:Euler}.
Thus we may take
$$
M=\Ee', \qquad \Sss: M\to \R^N, \;\; (e',x)\mapsto \pr_{\R^N} \bigl( \io (e',x)\bigr),
$$
where $\Ee'$ has rank $2\ell$, $N = 2k + 2\ell$, $\io: M\to \Ee'\oplus \Ee$ is the inclusion and $\pr_{\R^N}$ is the projection
$$
\pr_{\R^N}: \Ee\oplus \Ee'  \cong   \Oo_X^N: = \R^N\times X \to \R^N.
$$
  Denote  the Thom classes of $\Ee, \Ee'$ by $\tau_\Ee, \tau_{\Ee'}$ and their pullbacks to $\Oo_X^N$ 
by $$
\Ti \tau_\Ee\in H^{2k}(\Oo_X^N, \Oo_X^N\less \Ee'), \quad 
\Ti \tau_{\Ee'}\in H^{2\ell}(\Oo_X^N, \Oo_X^N\less \Ee).
$$  Then, if $\tau_{\R^N}\in H^N(\R^N, \R^N\less \{0\})$ is the canonical generator, we have
$$
\Sss^*(\tau_{\R^N}) = \io^*(\tau_{\Oo_X^N}) = \io^*(\Ti\tau_\Ee\cup  \Ti\tau_{\Ee'}) \;\; \in \; H^N\bigl(M, M\less X\bigr).
$$
We may identify  $ \mu_M\cap  \tau_{\Ee'}$ with the fundamental class $\mu_X \in H_{2k+d}(X)$, where  
$\mu_M\in H_{2k+d}(M,M\less X)$ is the restriction of the fundamental class of $M$.
Then for any class $b\in H^d(X)$,  we use the cap product in \eqref{eq:commcap} with $Y=M, A=\emptyset$ and $Y\less U = X$, and the relation between cap and cup products for even dimensional classes, to obtain
\begin{align*}
\bigl\langle [X]^{vir}_\Kk,\, b \bigr\rangle:  & =\bigl\langle \Sss_*(\mu_M\cap b),\, \tau_{\R^N}\bigr\rangle\\
& = \bigl\langle  \io_*(\mu_M\cap b),\,\Ti \tau_\Ee\cup \Ti \tau_{\Ee'} \bigr\rangle \\
& = \bigl\langle  \io_*(\mu_M\cap b \cap \tau_{\Ee'} ),\,\Ti \tau_\Ee \bigr\rangle\\
& =   \bigl\langle  \mu_X \cap b,  \io_X^*(\tau_\Ee) \bigr\rangle\; = \;   \bigl\langle  \mu_X \cap e(\Ee), b \bigr\rangle,
\end{align*}
where  we have written $\io_X: X\to \Ee$ for the inclusion and used the fact that $\io_X^*(\tau_\Ee)$ is the Euler class $e(\Ee)\in H^{2k}(X)$ of $\Ee$.
\end{proof}

\begin{rmk}\label{rmk:orb}\rm (i)
The above construction easily adapts to the case of an oriented  orbifold bundle $\Ee\to W$ over an oriented  orbifold $W$, where now we should think of the spaces
$\Ee, W$ as the realizations of suitable ep categories $\bE, \bW$.
Thus, one can build an atlas whose  basic charts are as above with the addition of a group action, while
the transition charts are made using composable  tuples of morphisms in $\bE$.   For details, see \cite[\S 5.2]{Mnot}.  One can then piece the corresponding fattened  charts together by the method explained in \S 2,3 below  to obtain a tuple
$(M,E_A,\Sss)$ as in Theorem~A.  However, we can also build the category $\bM$   directly from the set of basic charts
$(U_i,E_i,\Ga_i,s_i,\psi_i)$, using a partition of unity, and an associated reduction as explained in 
Remark~\ref{rmk:poly}.
\MS

\NI (ii)  In Gromov--Witten theory it sometimes happens that  the space of $J$-holomorphic maps in class $A$ does form a compact manifold (or orbifold) $X$ such that the rank of the cokernel of the linearized Cauchy--Riemann operator $D_x$ at $x\in X$  is independent of $x$.  In this case,  these cokernels fit together to form  a bundle $\Ee\to X$ such that the map $\s$ induced by the Cauchy--Riemann operator is zero.    We explain in \cite[Remark~5.2.4]{Mnot} why one can choose a Gromov--Witten  type atlas (constructed as in \cite[\S4]{Mnot} or \cite{P}) with precisely the structure considered above.
\MS

\NI (iii)  In \cite[Prop.~5.3.4]{P}, Pardon proves the analog of Lemma~\ref{le:Euler} in the smooth case,  using a transverse perturbation of $\s$ as 
in Step 3 of the proof of  Lemma~\ref{le:fundM}.
\hfill$\er$
\end{rmk}
\MS

\begin{example}\label{ex:sphere}\rm {\bf (The tangent bundle  of the $2$-sphere and the football)}
We now  illustrate the construction in the proof of Theorem~A  in the case of the bundle $\pi:\rT S^2 \to S^2$ with section $\s \equiv 0$, starting from the Kuranishi  atlas 
with two basic charts that was constructed in 
\cite[Example~3.4.2]{MWiso}.
We organize the details  into several steps.\MS

\NI {\bf (I)} {\it Atlas for the tangent bundle of the $2$-sphere.}
To build a Kuranishi  atlas whose associated \lq bundle' $\pr: |\bE_\Kk|\to |\Kk|$ models $\rT S^2$, cover $S^2$ by two copies $D_1,D_2$ of the unit disc in $\C$, whose intersection $D_1\cap D_2 = : D_{12}=:A\cong [0,1]\times S^1$ 
is an annulus, and for $i=1,2$ define $$
\bK_i: = (U_i: = D_i,\ E_i: = \C,\ s_i : = 0,\ \psi_i: = \id).
$$
  For $i=1,2$, choose unitary trivializations
$T_i: D_i\times \C\to \rT S^2|_{D_i}, (x,e)\mapsto T_{i,x}(e)$ %that depend only on the absolute value $|x|$ of $x\in D_i$, 
and then define the transition chart 
$$
\bK_{12}: = \bigl( U_{12}\subset E_1\times E_2\times A ,\ E_1\times E_2,\ s_{12} = \pr_{E_1\times E_2},\ \psi_{12} = \pr_A|_{0\times 0\times A}\bigr)
$$
by setting
$$ 
U_{12}: = \bigl\{ (e_1,e_2,x) \ \big| \ x\in A,\; T_{1,x}(e_1) + T_{2,x}(e_2) = 0\bigr\}.
$$
The coordinate changes $\Hat\Phi_{i,12}$ are given by taking $U_{i,12} = \{(0,0)\}\times A$ and
$\rho_{i,12}(0,0,x) = x$.
To justify this choice
of Kuranishi atlas, note that one can construct a commutative diagram 
\[
\xymatrix{
|\bE_\Kk| \ar@{->}[d]\ar@{->}[r] & \rT S^2\ar@{->}[d]\\
|\bB_\Kk|  \ar@/^1pc/[u]^{|\s |} \ar@{->}[r] & S^2   \ar@/^1pc/[u]^{s\equiv 0} ,
}
\]
where the top horizontal map restricts on $U_{12}\times E_{12}$ to the map $$
\bigl((e_1,e_2,x),e_1',e_2')\mapsto 
T_{1,x}(e_1') + 
T_{2,x}(e_2')\in \rT_x S^2\subset \rT S^2|_A.
$$ 
Thus it  takes 
$$
{\rm graph }\, s_{12} = \bigl\{ \bigl( (e_1,e_2,x),\ e_1,e_2\bigr)\ \big| \ (e_1,e_2,x)\in U_{12}\bigr\} \;\subset\; U_{12}\times E_{12}
$$
to the zero section of $\rT S^2$.

This construction is generalized to other (orbi)bundles in \cite{Mnot}.  \hfill$\er$\MS

\NI {\bf (II)} {\it Calculating the Euler class.}
In order to calculate the Euler class of $\rT S^2$ it is convenient to 
identify the annulus $A$ with $[0,1]\times S^1$, and then consider the corresponding trivialization  $\rT S^2|_A\equiv A\times \R_t\times \R_{\theta}$ where $t\in [0,1]$ and $\theta\in S^1\equiv \R/2\pi\Z$ are  coordinates.  Then for $i=1,2$  there is a  section $\nu_i: U_i\to E_i$ with one transverse zero such that $$
T_{i,x}(\nu_i(x)) = (x,1,0)\in A\times \R_t\times \R_{\theta}\equiv \rT S^2|_A, \quad x\in A
$$
 (Take suitably modified 
versions of the sections $\nu_1(z) = z, \nu_2(z) = -z$ where $D_i\subset \C$.)
Therefore the $\nu_i$ fit together to give a global section of $\rT S^2$ with two transverse zeros, and it follows
that the Poincar\'e dual of $e(\rT S^2)$ is represented by $2[pt]\in H_0(S^2).$

To see how $e(\rT S^2)$ is calculated via the atlas, we start by choosing
 a reduction $\Gg$ of the footprint covering.  
For example, we may take %$(V_i\sqsubset D_i)_{i=1,2}$ to be a slightly smaller disc and take
 $G_{12} = (\eps,1-\eps)\times S^1\sqsubset A$ for some $\eps\in (0,\frac 14)$ and choose $G_i\sqsubset D_i$ 
so that $$
\TV_{1,12} =(0,0) \times (\eps,\tfrac 14)\times S^1 \subset U_{12}, \quad 
\TV_{2,12} = (0,0)\times  (\tfrac 34,1-\eps)\times S^1 \subset U_{12}.
$$  Choose a cutoff function $\be:[0,1]\times S^1\stackrel{\pr} \to [0,1]\to [0,1]$ that equals $1$ in $[0,\frac 14]\times S^1$ and $0$ in $[\frac 34,1]\times S^1$.  Then the map $\nu_{12}: \TV_{12}\to E_1\times E_2$ given by
$$
\nu_{12}(e_1,e_2,x) \;=\;\bigl(\be(x) \nu_1(x),(1-\be(x))\nu_2(x)\bigr)  
\;\in\; E_1\times E_2
$$
% defines an admissible perturbation section that 
restricts to $\nu_i$ on $V_{i, 12}\subset (0,0)\times A$ for $i=1,2$, so that the tuple $(\nu_1,\nu_2, \nu_{12})$ is an admissible perturbation section
in the sense of \cite{MWiso}.
Moreover $s_{12} +\nu_{12}$ does not vanish at any point $(e_1,e_2,x_0)\in V_{12}$ because the three equations 
\begin{align*}
&\hspace{1in}T_{1,x_0}(e_1) + T_{2,x_0}(e_2)=0, \\
&
 T_{1,x_0}(e_1) + \be(x_0) (1,0)= T_{2,x_0}(e_2) + (1-\be(x_0))(1,0) = 0\;
\in  \; \{x_0\}\times  \R_t\times \R_{\theta} 
\end{align*}
together imply that the vector $(1,0)\in  \R_t\times \R_{\theta}$ is zero, a contradiction. Hence, as before,  the perturbed zero set consists of two points, each with weight one. \hfill$\er$

\MS

\NI {\bf (III)}  {\it  Construction of the corresponding manifold $M$ and section $\Sss_M:M\to E_{12}$.}
When, as in the case at hand, the isotropy groups are trivial, the current paper constructs from the above reduction $\Vv$ of  $\Kk$ a manifold $M$ that is the union of three components
$$
M = \Bigl((M_1=E_{2,\eps}\times V_1 ) \sqcup (M_2=E_{1,\eps}\times V_2 )  \sqcup (M_{12}=V_{12})\Bigr)/\!\sim,
$$
where $\sim$ identifies $(e_j,x)\in M_{i,12}$ with $\al_{i,12}(e_j,x)\in \TM_{i,12}\subset M_{12}$ where   $\al_{i,12}: = \tau^{-1}_{i,12}$.
The submersion axiom \eqref{eq:subm0}  implies that the submanifold $\TV_{i,12}$ has local product neighborhoods in $V_{12}$.  In \S2 we will describe how to assemble these into a more global structure that can be used to relate the different components $M_I$.  However, in the current situation
 there is an obvious global product structure that directly gives the needed attaching maps as follows.
First, with $i=1,j=2$, we define 
$$
\phi^E : (E_{2,\eps}\times \TV_{1,12}, \{0\}\times \TV_{1,12})\to (V_{12}, \TV_{1,12}), \quad (e_2,x)\mapsto \bigr(-T_{1,x}^{-1}(T_{2,x}(e_2)), e_2, x\bigl).
$$
Then,
the attaching map $\al_{1,12} = \tau_{1,12}^{-1}$ is given as follows: %has the form 
\begin{align*}
\al_{1,12}: &\;\;  E_{2,\eps}\times V_{1,12} \to V_{12},\\
 (e_2,x)&\; \mapsto \;
x' = \phi^E(\la \cdot e_2,x) =
\bigl(-T_{1,x}^{-1}(T_{2,x}(\la e_2)), \la e_2, \, x\bigr), \quad \la: = \sqrt{\|e_2\|}.
\end{align*}
Further, we take $\Sss_{12} = s_{12}$ where 
$$
s_{12}\bigr(-T_{1,x}^{-1}(T_{2,x}(e_2)), e_2, x\bigl)\; =\;  \bigr(-T_{1,x}^{-1}(T_{2,x}(\la e_2)), \la e_2\bigl),
$$
and then define $\Sss_1$ by pullback over $V_{1,12}$, extended over $M_1$ by a cut off function:
$$
\Sss_1(e_2,x) =  \be_{1,12} (x) \bigr(-T_{1,x}^{-1}(T_{2,x}(\la e_2)), \la e_2\bigl) + (1-\be_{1,12}(x) (0, e_2)
%\bigl( - \be_{1,12} (x)\,\tau_{1,x}^{-1}(\tau_{2,x}(e_2)), \; e_2\bigr)\;  \in\;  E_{12}.
$$
where $\be_{1,12}: V_1\to [0,1]$ equals $0$ near $x=0$ and $1$ on $V_{1,12}$.  Notice that $\tau_{1,12}$ does have closed graph in
$M_1\times M_2$ since $M_1$ contains no points $(e_2,x)$ with $x\in \{1/4\}\times S^1\subset A$, while $M_2$ contains no points $(e_1,e_2,x)$ with $x\in \{0\}\times S^1\subset A$.
There are similar formulas for $\al_{2,12}$ and $\Ss_2$.

This construction gives a $4$-manifold $M$ together with a map $\Sss_M: M \to E_{12}$ whose zero set is homeomorphic to $S^2$. 
In fact we can identify $M$ with a neighborhood of the zero section in $\rT S^2$ that has width $\eps>0$ over the discs $(V_i \less V_{i,12})_{i=1,2}$ and contains the whole of  $\rT S^2|_{G_{12}}.$  This holds because $V_{12}$ can be identified with 
$\rT S^2|_{G_{12}}.$ 
 \hfill$\er$
\MS

\NI {\bf (IV)}  {\it The normal bundle of $\Sss_M^{-1}(0)\cong S^2$ in $M$ is isomorphic to $\rT S^2$.}
%We claim that a neighborhood of this zero set is homeomorphic to the tangent bundle of $S^2$. 
 To see this, note that 
%for some neighborhood $\Nn( \TV_{1,12})$ of $ \TV_{1,12}$ in $V_{12}$ 
there is an embedding
$$
M_1\cup_{\al_{1,12}} \TM_{1,12} \to \C\times D_1
$$
given on $M_1=E_{2,\eps}\times V_1$ by the obvious inclusion (where we identify $E_2\equiv \C$) and on $ \TM_{1,12}$ by 
$$
\bigl(-T_{1,x}^{-1}(T_{2,x}(e_2)), e_2, x\bigr)\mapsto (\la^{-1} e_2, x)\in E_2\times A\subset \C\times D_1, \quad \la = \sqrt{\|e_2\|}.
$$
Identifying $A$ with  $(\eps, 1-\eps)\times S^1$ as above, we may extend this embedding over a neighborhood $\Nn_1\subset M_{12}$ of the set $\{(0,0)\}\times (\eps,\tfrac 12]\times S^1 $ so that
 it equals $$
 \bigl(-T_{1,x}^{-1}(T_{2,x}(e_2)), e_2, x\bigr)\mapsto (e_2, x), \qquad \forall\; x\in (\tfrac 12-\de, \tfrac12]\times S^1.
 $$  
 The similar embedding 
$$
(E_{1,\eps}\times V_2)\cup_{\al_{1,12}}\; \Nn_2 \to \C\times D_2
$$
is given near the  circle $  \{\tfrac 12\}\times S^1$ by the map $(e_1,- T_{2,x}^{-1}(T_{1,x}(e_1)), x)\mapsto (e_1,x)$.
Therefore this bundle over $S^2$ is determined by the clutching map $x\mapsto - T_{2,x}^{-1}(T_{1,x})$, which is homotopic to the map
$x\mapsto  T_{2,x}^{-1}(T_{1,x})$ that determines $\rT S^2$. \hfill$\er$
\MS

\NI {\bf (V)}  {\it The case of the football orbifold $S^2_{p,q}$.}
This orbifold is topologically $S^2$, but has orbifold points of orders $p,q$ at the two poles.  Thus the bundle
$\pi:\rT S^2_{p,q} \to S^2_{p,q}$ is again modelled by a Kuranishi atlas\footnote
{
The reader should beware that the words \lq orbifold  atlas' or \lq good atlas' are usually used in orbifold theory  with slightly different meaning,  which is why  \cite{Morb}  uses the words  \lq strict atlas' to denote a Kuranishi atlas with trivial obstruction spaces.  
As explained in \cite{Morb}, a strict atlas $\Kk$ for an orbifold $Z$ defines an ep groupoid  $\bG_\Kk$ whose realization is $Z$,  and hence defines an orbifold structure on $Z$.
Further, by \cite[Proposition~3.3]{Morb},  $\bG_\Kk$  is Morita equivalent to the category constructed from any  standard orbifold atlas for $Z$.  Finally  one can obtain a standard orbifold atlas for $Z$ from $\Kk$ by taking a collection of restrictions of the basic charts in $\Kk$ whose footprints cover $Z$, with transition maps induced by the morphisms in $\bG_\Kk$. 
}
 with two basic charts $\bK_1, \bK_2$ as above, with $\Ga_1 = \Z/p\Z$ acting by rotations on $D_1,E_1$ and with $\Ga_2 = \Z/q\Z$ acting by rotations on $D_2,E_2$. 
Since $s_i\equiv 0$ for $i=1,2$, the footprint maps $$
\psi_i: \s_i^{-1}(0)= U_i \to S^2_{p,q}, \quad x\mapsto |x|,
$$
simply quotient out by the action of the group $\Ga_i$.  
We choose  the trivializations $T_{i,x}$ of $\rT D_i$ to be equivariant under the rotation action of the isotropy groups, and will suppose for simplicity that 
$(p,q) = 1$  so that the domain   $U_{12}$ of the transition chart is connected.\footnote
{
\, Since all points in $U_{12}$ have trivial stabilizer,  we need $\Ga_{12}$ to act freely on $U_{12}$ in such a way that the projection $\rho_{j,12}$ quotients out by the action of $\Ga_i$, which is possible for connected $U_{12}$ only if 
$(p,q) = 1$.
}  
Then, in
 terms of the coordinates $(t,\theta)\in A$ introduced in (II) we have
\begin{align*}
U_{12} = \{(e_1,e_2,x)\in E_1\times E_2 \times A\ \big| \ |T_{1,\rho_{1,12}(x)}(e_1) | + |T_{2,\rho_{2,12}(x)}(e_2) | = 0\},\\
\rho_{1,12}(t,\theta) = (t,q\theta),\quad \rho_{2,12}(t,\theta) = (t, p \theta)\in A = [0,1]\times \R/\Z,
\end{align*}
 where we denote the image of  $(e,x)\in E_1\times D_1$ in $T_{|x|}S^2_{p,q}$ by  $|T_{1,x}(e)|$, and the equation 
 takes place in the tangent bundle of the orbifold.
Because the maps $\rho_{i,ij}$ are equivariant by hypothesis, this equation is preserved by the action
 of $\Ga_{12}$ on $U_{12}$ by
$$
\bigl(r/p,s/q)\cdot (e_1,e_2, (t,\theta)\bigr) = \bigl(r/p\cdot  e_1, s/q\cdot  e_2, (t, \theta + kr/p+\ell s/q)\bigr), \quad kq+\ell p = 1.
$$
 We may 
 calculate the Euler class by using essentially 
 the same perturbation section as before, since this may be chosen to be equivariant.  
  But now the two zeros of the section count with weights, $\frac 1p$ for the zero in $V_1$ and $\frac 1q$ for the zero in $V_2$.
 
The corresponding category $\bM$ has three components  that are given by the same formulas as before.
Again,  the attaching maps $\tau_{i,12}: \TM_{i,12}\to M_{i,12}\subset M_i$ are nontrivial covering maps.  However, 
in distinction to the case of an atlas, the $\tau_{i,12}$ do {\it not}	quotient by the induced action of $\Ga_j$ on $\TM_{i,ij}$ 
since they are constructed to be 
 $\Ga_{12}$ equivariant, and $\Ga_{12}$ acts  (often effectively) on $M_i$, via
 $$
 (\ga_1,\ga_2)\cdot (e_j,x_i) = (\ga_j\cdot e_j, \ \ga_i\cdot x_i).
 $$
 However, as explained at the end of the proof of Proposition~\ref{prop:M1} (see for example \eqref{eq:ga*}),
 they do quotient out by {\it some} action of $\Ga_j$ on $\TM_{12}$ that extends its free action on $\TV_{i,12}\subset \TM_{i,12}$.  
  For example, the map $\tau_{1,12}$ quotients out by the free action of $\Ga_q$ on 
$\TM_{1,12}\subset  E_1\times E_2 \times (\eps,\frac 14)\times S^1$ given by
$$
\ga\cdot (e_1, e_2, x)\mapsto (e_1, e_2,  \ga\cdot x).
$$
Therefore, in the quotient space $M= |\bM|$ there are $q$ branches of $M_{12}$  that come together over the $3$-dimensional branching locus
$$
Br_1: = \bigl\{|(e_1, e_2,x)|\in |M_{12}|\subset |\bM|_\Hh \ \big| \ x \in \tfrac 14 \times S^1\bigr\}.
$$
This is consistent with the requirements of  Definition~\ref{def:brorb} since the component $M_{12}$ has weight $1/pq$ while $M_1$ has weight $1/p$.

The construction of $\Sss_M:  M\to E_{12}$ is as before.  Moreover, one can identify a neighborhood of its zero set $S^2_{p,q}$  with a neighborhood of the zero section of the  tangent orbibundle to $S^2_{p,q}$ . Hence the Poincar\'e dual of $e(\rT S^2_{p,q})$ is represented by $$
(1/p + 1/q)[pt]\in H_0(S^2_{p,q}).
$$
\MS

\NI {\bf (VI)}  {\it The quotient space $|\bM|/\Ga$ for $\rT S^2_{p,q}$.} The only morphisms in the category $\bM$ come from the covering maps $\tau_{j,12}$.   Since these are $\Ga_{12}$-equivariant, we can add the  action $\Ga_{12}\times \Obj_{\bM}\to \Obj_{\bM}$ to  the morphisms in $\bM$.  The resulting quotient space $|\bM|/\Ga_{12}$ has the following structure.
\begin{itemlist}\item It 
is covered by three branches $M_1, M_2, M_{12}$  with  weights $1/p^2q$, $ 1/pq^2$ and $ 1/p^2q^2$;
\item  the two poles $[(0,0)]\in M_i/\Ga_{12}$ 
have stabilizer subgroup $\Ga_{12}$;
 \item the other points with nontrivial stabilizers lie on the two closed discs  $$
 \{0\}\times (\ov{V_i}\less \{0\})/\Ga_{12}\subset |M_i|/\Ga_{12},\quad i=1,2
 $$
with isotropy subgroups $\Ga_j, j\ne i$;
 \item  for $i=1,2$ there is  branching of order $|\Ga_j|$  over the $3$-dimensional branching locus  $Br_i$. For example, if $\Ga_1 = \{\id\}, \Ga_2= \Z/2\Z$, then 
 $|M_1|/\Ga_2$ is an orbifold  with a $2$-dimensional family of points with nontrivial stabilizer  (corresponding to the points $\{0\}\times D_1\subset E_2\times D_1$),
 while $\Ga_2$ acts freely on $M_{12}$ and the $\Ga_2$-equivariant map $\rho_{1,12}:M_{1,12}\to M_1$ quotients out by a different free action of $\Ga_2$ that lifts the rotation  action on $A$  via the projection $\TM_{1,12}\subset E_{12}\times A\to A$.  Thus there is branching of order $2$ along the boundary $Br_1$, which lies over the circle $t =\{1/4\}$.
 \end{itemlist}
 We do not consider this space further, since it plays no role in the definition of the fundamental class.\hfill$\er$
\end{example}
\MS

We end with a list of the main notation used in \S\ref{s:main} and \S\ref{s:out} 

\NI  {\bf (I: related to atlases)} 

in Theorem~A:  $\Kk, \; X,\; E_A,\;  \Ga_A,\;  \Ss_M:M\to E_A$.\;  in Theorem~B:  $[X]^{vir}_\Kk$.

in \S\ref{ss:defs}:  $\Kk, \; A, \; \Ii_\Kk,\;$ 
for $I\in \Ii_\Kk, \; \bK_I= (U_I, E_I,\Ga_I, s_I,\psi_I)$ ,\; $F_I\subset X$;

for $I\subset J$: $ \TU_{IJ}\subset U_J,\; U_{IJ}\subset U_I,\; \rho_{IJ}: \TU_{IJ}\to U_{IJ}$;
$\ga|_J, \Ga_A$ in \eqref{eq:GaA};

 constants:  $\ve$ in \eqref{eq:ve},  and $E_{I,\eps}$ for $I\subset A$ in \eqref{eq:Eeps};\; 
 $\phi_x^E$ in \eqref{eq:subm0}

$\Kk'\sqsubset \Kk$,\; $\Uu'\sqsubset \Uu = (U_I)_{I\in \Ii_\Kk}$ in \eqref{eq:Uu};  $\Ff = (F_I)_{I\in \Ii_\Kk}$ at beginning of \S3.

the categories $\bB_\Kk, \bE_\Kk$ in \eqref{eq:Kcat} ff and $|\Kk|: = |\bB_\Kk|$; $\pi_\Kk:U_I\to |\Kk|$ in \eqref{eq:piK}

reduction $\Vv = (V_I)$,\;  $G_I$ in \eqref{Vv} ff;\ $ \bB_\Kk|_\Vv$.   

 $V_{IJ}\subset V_I, \TV_{IJ}\subset V_J$
and $\pi_\Kk:V_I\to |\Kk|$  in \eqref{eq:VIJ}.

\MS

\NI  {\bf (II: related to wb manifolds)} 

in Definition~\ref{def:brorb}:  $(\bG, \La_\bG), \; |\bG|_\Hh,\; \pi^\Hh_\bG$

 in Proposition~\ref{prop:M} $(\bM, \La_\bM)$,\; $(M,\La),\; M_I, \; M_{IJ},\; \TM_{IJ}, \; \tau_{IJ}, \; \Sss_J,\;  \Sss,\; \Sss_M$
 
 in Theorem~\ref{thm:M}  $\; M= |\Hat{\bM}|_\Hh,\; |\Hat{\bM}|,\ ; |\Hat{\bM}_\Hh|$

\MS

\NI  {\bf (III: related to the manifolds $Y$)} 

 \S\ref{ss:Y} beginning: 
$t\in \De_J,\;  \p_{J\less I} \De_J,\; \io_{IJ}, \; t\cdot e, $; $\ka$,\;$I(x), I(t)$ in \eqref{eq:Ix};

$(e,x;t)\in Y_J= Y_{\Uu,J,\,\ve}$ in \eqref{eq:Y};  and $Y_{\Vv,J,\,\ve}$ in \eqref{eq:YV}

 $\pr_E, \pr_U, \pr_\De$ after \eqref{eq:Ixx};\; $\p_{J\less I} Y_J$ in \eqref{eq:YIJ};
 
 $b_H\in \De_H$ in \eqref{eq:bary}; $\io_{EU}$ in Corollary~\ref{cor:bary}; and $\io_{EV}$ in \eqref{eq:ioEV}. 
\MS

\NI  {\bf (IV: related to the collar)} 

$c_J^\De$ in \eqref{eq:collDe};   $c_J^Y: \p'Y_{\Vv,J,\,\ve}\times [0,w_j)\to Y_{\Vv,J,\,\ve}$ in \eqref{eq:coll1}

$\ov{\st}^\De_J(|x|)$ in \eqref{eq:starx};\,  $\p V_J$ in \eqref{eq:pVJ}.
$\Fr, cl$ in \eqref{eq:Fr}

\section{The main arguments}\label{s:out}

In this section, we first explain how to construct an auxiliary family of collared manifolds  and then explain in \S\ref{ss:M}  how to use this family to prove Proposition~\ref{prop:M1} and hence Proposition~\ref{prop:M}.  Finally,  we prove Theorems~A and~B in \S\ref{ss:topol}.

The  key notion is that of  the manifold $Y_{\Uu,J,\,\ve}$, which lies over the $(|J|-1)$-dimensional simplex $\De_J$. 
Its  open submanifold $Y_{\Vv, J,\,\ve}$, corresponding to a choice of reduction $\Vv\subset \Uu$,  has a partially defined boundary collar 
that is compatible both with shrinking of  chart domains and with  projection to  $\De_J$.  
 We will define the attaching maps $\TM_{IJ} \to M_{IJ}$ of the different components of $\Obj_{\bM}$ by thinking of $M_J$ as a subset of 
$Y_{\Vv, J,\,\ve}$.  

Although strictly speaking the construction of the category $\bM$ only uses the manifolds $Y_{\Vv, J,\,\ve}$,  we also consider 
the manifolds $Y_{\Uu,J,\,\ve}$ to clarify the exposition.  The latter
 has elements that 
are relatively easy to  understand (cf.  \eqref{eq:Ixx}) and it has an easily described boundary, while as we see from Proposition~\ref{prop:col} the collar is supported on only a rather complicated part of the boundary of $Y_{\Vv, J,\,\ve}$.
Further,   considering both $Y_{\Uu,J,\,\ve}$ and $Y_{\Vv,J,\,\ve}$ will allow us in \S\ref{s:det} 
 to introduce the many technical conditions satisfied by the pair $(\Vv,\ve)$ in stages, first  some conditions on $(\Uu,\ve)$ needed for
 $Y_{\Uu,J,\,\ve}$ to have good properties (Definition~\ref{def:compat}), and then more conditions needed  to construct a suitable collar on  $Y_{\Vv,J,\,\ve}$ (Definition~\ref{def:compatV}).

 The first main results of this section are Proposition~\ref{prop:Y} that describes the structure of $Y_{\Uu, J,\,\ve}$ and 
  Proposition~\ref{prop:col} that describes the properties of the boundary collars put on the manifolds $Y_{\Vv, J,\,\ve}$.  Proposition~\ref{prop:M1} then explains how to use these boundary collars to construct the attaching maps $\tau_{IJ}$ whose existence is claimed in  Proposition~\ref{prop:M}.  Since the general construction is quite complicated, we describe it first by example (see Example~\ref{ex:MJ}).    
Since the proofs of Theorems~A and~B in \S\ref{ss:topol} depend only on the statement of Proposition~\ref{prop:M},  this subsection can be read independently of \S\ref{ss:YY}, \S\ref{ss:M}.
 
\subsection{The collared manifold $Y$}\label{ss:YY}
Suppose given a tame atlas $\Kk$ with set of chart domains $\Uu: = (U_I)_{I\in \Ii_\Kk}$.   The next definition uses a choice of  constants $\ve = (\eps_I)$ as in \eqref{eq:ve}, and the following notation:
\begin{itemlist}\item[-]
$
\De_J: =\bigl \{t = (t_i)_{i\in J} \ \big|\  t_i\ge 0,\;  {\textstyle |t|: =\sum_{i\in J}} t_i = 1\bigr\}
$
is the $(|J|-1)$-simplex; \vspace{.06in}
\item[-] for  $\emptyset \ne I\subsetneq J$, we denote by  $\io_{IJ}: \De_I\to \De_J$  the natural inclusion with image 
$$
 \p_{J\less I} \De_J: = \{t\in \De_J \ \big| \  t_j=0, j\in J\less I\}  \subset \De_J;
$$ 
(we often omit $\io_{IJ}$ if there is no danger of confusion)
\item[-]
$
t\cdot e : ={\textstyle  \sum_{i\in J}} t_i e_i,\quad \mbox{ where  } t\in \De_J, e\in E_A;
$ \vspace{.06in}
\item[-]
 $\ka: = \max\{ |J|: J\in \Ii_\Kk\}$; \vspace{.06in}
\item[-] for $x\in U_J$, 
$
I(x) : = \{j: s_j(x)\ne 0\}\subset J 
$ and \vspace{.06in}
\item[-] $\ve: = (\eps_I)_{I\in \Ii_\Kk}$ is a set of positive constants such that $\ka\, \eps_I\le \eps_J$ whenever $I\subsetneq J$.
\end{itemlist}

Given $J\in \Ii_\Kk$, consider the set\footnote
{\, To begin with, readers should ignore the rather fussy conditions involving  the constants $\ve$; in this connection see \eqref{eq:bary} and   Corollary~\ref{cor:bary}  below.
Notice that we do need some such constants since the size of $\eps_J$ determines how thick the pieces $M_J$ will be, and to construct $\bM$ we need to 
embed (a covering of) $M_{IJ}$ into $M_J$ for all $I\subset J$.  
}
\begin{align}\label{eq:Y}
Y_J:&  = Y_{\Uu,J,\,\ve} \\ \notag
& = \Bigl\{(e,x; t)\in E_A \times U_J \times \De_J \ \Big|\  \begin{array} {ll} \; s_J(x) = t\cdot e,  & \|e\|< 
\ka\, \eps_{I(x)},\vspace{.05in} \\
 \|s_i(x)\|<\eps_{I(x)}& \forall i\in J\end{array}\Bigr\}.
\end{align}
\MS

 Here are some properties of this definition.
 \begin{itemlist}
 \item $\Ga_A$ acts on $Y_{\Uu,J,\,\ve}$ by
$$
\ga\cdot (e,x;t) = (\ga\cdot e, \ga\cdot x; t).
$$
\item
%Since $\Kk$ is tame, 
The condition $s_J(x) = t\cdot e$ implies that  
\begin{align}\label{eq:Ix}
I(x) : = \{j: s_j(x)\ne 0\} \;\subset \; I(t): = \{i:t_i>0\}.
\end{align}
In particular,  if $(e,x;t)\in Y_{\Uu, J,\,\ve}$ we must have 
\begin{align}\label{eq:Ixx}
x\in  s_J^{-1}(E_{I(x)}) =  \TU_{I(x)J} \subset  \TU_{I(t)J},
\end{align}
where the equality holds because $\Kk$ is tame (see \eqref{eq:cocyl}). 
Further,  the components of $e$ in $E_{I(t)}$ are determined by the pair $(x,t)$, while those in $E_{A\less I(t)}$ can vary freely.

\item   There  are three $\Ga_A$-equivariant projections of  $ Y_{\Uu,J,\,\ve}$ onto the factors of its domain.
\begin{itemlist}\item [-]
$
\pr_E: Y_{\Uu,J,\,\ve}\to E_A,\quad (e,x;t)\mapsto e.
$
For $I\subset A$, we  denote  by $e_I$ the elements of $E_I$, and 
denote by $\pr_{E_I}$ the projection to $E_I$.
%by $e|_I: = (e_i)_{i\in I}$ the projection of $e\in E_A$ onto $E_I$.
\item[-] The projection
$
\pr_U: (e,x;t)\mapsto x \in  U_J% \qquad \pr_\De: (e,x,t)\mapsto t\in \De_J.
$
has contractible fibers that vary with $x\in U_J$.
\item[-] 
 The fibers of $\pr_\De: Y_{\Uu,J,\,\ve}\to \De_J,\quad (e,x;t)\mapsto t $ also depend on the image $t\in \De_J$. 
  In particular, if  for some  $I\subsetneq J$  we have $t\in \intt \De_I: =  \De_I\less \p \De_I\subset \p \De_J$ then for any $(e,x; t)\in \pr_\De^{-1}(t)$, we must have $x\in \TU_{IJ}$ while the restriction $\pr_{E_{A\less I}}(e)$ can vary freely.
\end{itemlist} 
\item  For each element of the form $\bigl(e,x;\io_{IJ}(t)\bigr)\in Y_{\Uu,J,\,\ve} $ 
there is a corresponding element $(e,\rho_{IJ}(x); t)\in Y_{\Uu,I,\,\ve} $, where $\rho_{IJ}:\TU_{IJ}\to U_{IJ}$ 
is part of the atlas coordinate change.  Thus, if we define
\begin{align}\label{eq:deIJ}
\p_{J\less I} Y_J: = \pr_\De^{-1}(\p_{J\less I} Y_J): =\bigl \{(e,x;t)\in Y_J \ \big| \ t_j=0, j\notin I\bigr\},
\end{align}
 there is a 
$\Ga_A$-equivariant covering map
\begin{align}\label{eq:YIJ}
&\p_{J\less I} Y_J\to  Y_I\cap (E_A\times U_{IJ}\times \De_I)\subset Y_I. %\hspace{1in}\er
\end{align}
If the isotropy is trivial, we can therefore identify $\p_{J\less I} Y_J$ with an open subset of $Y_I$.

\item The relevance of the conditions involving the constants $\ve$ are explained by the following remark.
For each $x\in U_J$ such that  $ \|s_i(x)\|<\eps_{I(x)},  \forall i\in J$, and every $H$ satisfying
$I(x)\subset H\subset J$,
there is a corresponding element 
\begin{align}\label{eq:bary} \bigl(e,x;\io_{HJ}(b_H)\bigr) \;\in \;  Y_{\Uu,J,\,\ve},
\end{align}
 where $b_H$ is the barycenter of $\De_H$.  Indeed, if we take $e: = \bigl( |H| s_i(x)\bigr)_{i\in A}$, then $e_j = 0, j\notin I(x)$, by definition of $I(x)$, while  for $i\in I(x)$ we have
 $\|e_i\| = |H| \|s_i(x)\|< \ka \eps_{I(x)}$ as required by \eqref{eq:Y}.
% 
% 
% since $b_H$ has components $\frac 1{|H|} \ge \frac 1\ka$,  we 
%can take $e = \bigl( |H| s_i(x)\bigr)_{i\in A}$.
\end{itemlist}

\MS

The following result is proved in Corollary~\ref{cor:Y}.

\begin{prop}\label{prop:Y}  Let $\Uu^\Om$ be a family of chart domains for an atlas on $X$.  Without loss of generality, we may pass to a shrinking  $\Uu\sqsubset \Uu^\Om$  and choose  constants $\ve>0$
so that the following holds for all $J$.
\begin{enumerate}\item  $s_J(\ov {U}_J)\subset E_{J,\eps_J}$;
\item the space $Y_J: = Y_{\Uu, J,\, \ve }$ defined in \eqref{eq:Y} is a manifold  of dimension $D+|J|-1$ where $D: =  \dim E_A+d$;
\item $Y_J$ has  boundary given by 
\begin{align*}
\p Y_J: =  Y_J\cap \pr_\De^{-1}(\p \De_J) & ={\textstyle  \bigcup_{I\subsetneq J} \p_{J\less I}  Y_J} \\
 & =
{\textstyle  \bigcup_{I\subsetneq J} \bigl\{(e,x; t)\in  Y_J: x\in \TU_{IJ},} \; t\in \p_{J\less I}\De_J\bigr\}.
\end{align*}
\end{enumerate}
\end{prop}

\begin{cor}\label{cor:bary}  If Proposition~\ref{prop:Y} holds, then for all
  $I\subsetneq J$ there is an embedding $\io_{EU}: E_{A\less I,\eps_I}\times \TU_{IJ} \to Y_{\Uu, J, \,\ve}$ given by
\begin{align*}
\io_{EU}: \;\; (e_{A\less I},x) \mapsto (e_{A\less I} + b_I^{-1}\cdot s_I(x),x; b_I).
\end{align*}
\end{cor}
\begin{proof}  Since $s_J(\ov {U}_J)\subset E_{J,\eps_J}$ by (i), this holds by \eqref{eq:bary}.
\end{proof}
 
 Proposition~\ref{prop:Y} shows that the boundary of $Y_J$ lies over that of $\De_J$. 
It is well known that the  boundary of every topological manifold can be collared.  The next step is to show that we can construct this collar to have a special form, with control over the components  in $E_{A\less I} $ near the \lq corner' $\pr_\De^{-1}(\p_{J\less I} \De_J)$.  However, to establish this we need to pass to a  {\bf reduction} $\Vv = (V_I)_{I\in \Ii_\Kk}$ of the atlas (see \eqref{Vv}), since
this severely restricts  the overlaps $\pi_\Kk(V_I)\cap \pi_\Kk(V_J) $ in $|\Kk|$ of the different chart domains.
We define
 \begin{align}\label{eq:YV}
Y_{\Vv,J,\, \ve}: = Y_{\Uu,J,\,\ve}\cap (E_A \times V_J \times \De_J).  %, \quad Y_\Vv: = \bigsqcup_J Y_{\Vv,J,\,\ve}.
\end{align}
Since $Y_{\Vv,J,\,\ve}$ is an open subset of $Y_{\Uu,J,\, \ve}$, it is a manifold of dimension $d + \dim E_A + |J|-1$ with boundary
 $$
 \p Y_{\Vv,J,\,\ve} =  Y_{\Vv,J,\, \ve} \cap \p Y_{\Uu,J,\, \ve} \subset {\textstyle \bigcup_{I\subsetneq J}}\; E_A \times (V_J\cap \TU_{IJ}) \times \p_{J\less I} \De_J.
 $$
We denote by 
\begin{align}\label{eq:ioEV}
\io_{EV}: \;\;E_{A\less I,\eps_I}\times \p\TV_{IJ} \to Y_{\Vv, J, \,\ve}, \quad  (e_{A\less I},x) \mapsto (e_{A\less I} + b_I^{-1}\cdot s_I(x),x; b_I).
\end{align}
the restriction of the map $\io_{EU}$ in Corollary~\ref{cor:bary}, and will consider the projections
\begin{align*}%\label{eq:prV}
&\pr_V: Y_{\Vv,J,\, \ve} \to  V_J,\quad  (e,x;t)\mapsto x,\\ \notag
&\pr_{|V|}: Y_{\Vv,J,\, \ve} \to  |V_J|,\quad  (e,x;t)\mapsto |x|: = \pi_\Kk(x),
\end{align*}
where $\pi_\Kk$ is as in \eqref{eq:piK}.

There is a corresponding category with objects $\bigsqcup_{J \in \Ii_\Kk} Y_{\Vv,J,\, \ve}$ and morphisms given by the covering maps
\begin{align}\label{eq:rhoIJ*}
(\rho^Y_{IJ})_*: Y_{\Vv,J,\, \ve}\cap \bigl(E_A\times \TV_{IJ}\times \io_{IJ}(\De_I)\bigr) \to Y_{\Vv,I,\, \ve}, \quad \bigl(e,x; \io_{IJ}(t)\bigr)\mapsto (e,\rho_{IJ}(x); t).
\end{align}
This category has realization 
\begin{align*}%\label{eq:uY}
\uY_\Vv: ={\textstyle \bigcup_{J\in \Ii_\Kk}  Y_{\Vv, J,\, \ve}/\!\!\sim}
\end{align*}
where $(e, x; t)_I \sim (e', x'; t')_J $ for $|I|\le |J|$ if $I\subset J$, $e'=e$,\, $t' = \io_{IJ} (t)$, and
$\rho_{IJ}(x') =  x $.
Notice that the projections to $\De_J$  induce a map  
\begin{align*}%\label{eq:uY1}
 \pr_\De: \; \uY_\Vv\; \to\;  \De_\Kk = {\textstyle \bigcup_{J\in \Ii_\Kk}  \De_J/\!\!\sim}
\end{align*}
where  the simplicial complex $\De_\Kk$ (with  boundary identifications induced by the face inclusions $\io_{IJ}$) is the topological realization of the poset $\Ii_\Kk$.\footnote
{
\, The topological realization of  a topological category has one $k$-simplex for each length-$k$ composable string of morphisms, with the \lq obvious' boundary identifications.  Thus $\De_\Kk$ has one $k$-simplex for each $I\in \Ii_\Kk$ with $|I|=k+1$.
Observe that as the associated  footprint covering  $(F_I)_{i\in \Ii_\Kk}$ of the zero set $X$ is refined, the space $\De_\Kk$ gives better and better approximations to the topology of $X$:  indeed the \c{C}ech cohomology of   $\De_\Kk$ converges to  that of $X$.} 
There is also a projection 
\begin{align*} %\label{eq:prV2}
\pr_{|\Vv|}: \uY_\Vv\to |\Vv| \sqsubset |\Kk|,\qquad [e,x;t]\mapsto |x|.
\end{align*}

\begin{rmk}\rm  (i)  The projection $\pr_{|\Vv|} \times \pr_ \De$ induces a map   $$
\uY_\Vv \to \|\Vv\|'\;\; \subset\;\;  |\Vv|\times \De_\Kk,
$$
 whose image  $\|\Vv\|'$ is closely related to, but not the same as, the  topological realization 
$\|\bB_\Kk\big|_\Vv^{\less \Ga}\|$ of the category $\bB_\Kk\big|_\Vv^{\less \Ga}$ in \eqref{eq:TVIJ}.   For example, if $x\in V_J$ is such that its image $|x|: = \pi_\Kk(x)$ in $|\Kk|$ lies outside all the other sets $\pr_\Kk(V_I), I\ne J$,  then it gives rise to a single point in $\|\bB_\Kk\big|_\Vv^{\less \Ga}\|$ (since the only morphism involving $x$ is the identity morphism)  while it corresponds to a whole simplex $x\times \De_J$ in 
$\|\Vv\|'$.\footnote
{
\, If the isotropy is trivial, there is an embedding $\|\bB_\Kk\big|_\Vv^{\less \Ga}\|  \to \|\Vv\|'$ , whose image can be described  using versions of the  sets  $\ov\st^\De_J(|x|)$ in \eqref{eq:starx} below.
}
   The partial boundary $\p'\, Y_{\Vv,J,\, \ve}\subset \p Y_{\Vv,J,\, \ve}$ that we consider below could be understood in terms of an embedding of 
$\|\bB_\Kk\big|^{\less \Ga}_\Vv\|$  into  $\|\Vv\|'$.  However, we will take a more naive, geometric point of view.
\MS

\NI (ii) We saw in Remark~\ref{rmk:poly} that  in the polyfold setting one can use an sc-smooth partition of unity
to construct  a 
finite dimensional branched manifold $M$ with section $\Sss: M\to E_A$ that is a global chart for $X$.
One can think of the extra 
coordinates $t\in \De_J$ (with $\sum t_i = 1$) as a kind of \lq external' partition of unity that 
gives a more indirect way to patch the different coordinate charts together.   
\hfill$\er$   
\end{rmk}

\MS

 \NI {\bf The boundary collar.}
  We now consider lifts to $Y_{\Vv,J,\,\ve}$ of  the following  collar on $\p \De_J$
\begin{align}\label{eq:collDe}
 c_{J}^\De: \p \De_J\times [0,w) \to \De_J,\quad (t,r)\mapsto (1- r|J|)\,t + r |J|\,b_J,
\end{align}
 where $b_J = (\frac 1{|J|},\cdots, \frac 1{|J|})$ is the barycenter of $\De_J$ and $w< \frac1{4|J|}$;
see Figure~\ref{fig:3}.
Note that any $t\in \De_J$ with at least one component $t_i< w$ is in the image of this collar.  In order to
get maximal control over the collar 
we will not define it on all of $\p Y_{\Vv,J,\,\ve} $ since much of  $\p Y_{\Vv,J,\,\ve} $ is irrelevant to the task at hand.  Indeed,
we are only interested in boundary points $(e,x; t)$ with $x\in \TV_{IJ}$ for  $I\subsetneq J$ while, by Proposition~\ref{prop:Y},
 a general boundary point
has $$
x\in V_J\cap s_J^{-1} (E_I) = V_J\cap \TU_{IJ},
$$
 a set that  is usually strictly larger than the overlap $\TV_{IJ}$ (which is defined in \eqref{eq:VIJ}).
Although the submersion axiom \eqref{eq:subm0} implies that each $\TV_{IJ}$ is a submanifold in $V_J$ of codimension $\dim(E_{J\less I})$, 
we will make the following definition of the \lq boundary' of $V_J$: 
\begin{align}\label{eq:pVJ}
\p V_J: = {\textstyle \bigcup_{H\subsetneq J} }\TV_{HJ},
\end{align}
which lies over the  \lq boundary' $
 \p |V_J| =  \bigcup_{H\subsetneq J} |V_{HJ}| 
 %\;\subset \ol(|\Vv|) : =   {\textstyle \bigcup_{I\subsetneq L} } |V_{IL}|
 $ of $|V_J|$.

We will define the collar $$
c_J^Y: \p'\, Y_{\Vv,J,\,\ve} \times [0,w_J)\to Y_{\Vv,J,\,\ve},
$$
over a subset $\p' \,Y_{\Vv,J,\,\ve} $ of points $(e,x;t)\in  \p Y_{\Vv,J,\,\ve} $ such that  $x\in \p V_J$ and $t$ is restricted  
 to  lie in the set $\ov\st_J^\De(|x|)$ defined as follows.
 Recall that
for each  $x\in V_J$ the sets $H$ such that $|x|: = \pi_\Kk(x) \in \pi_\Kk(V_H)$  (where $\pi_\Kk: V_J\to |\Kk|$ is the projection 
\eqref{eq:piK})
form a chain
\begin{align}\label{eq:chainI}
I: = I_{\min}(|x|) = I_0(|x|)\subsetneq  I_1(|x|)\subsetneq \cdots \subsetneq  I_{m}(|x|) = I_{\max}(|x|)=:K.
\end{align}
If $  J = I_n(|x|),\;  n\le  m$  we will write
\begin{align}\label{eq:starx}
\ov\st_J^\De(|x|): & = \conv(b_{I_0}, b_{I_1}, \dots, b_{I_{n-1}}) \subset  \p_{J\less I_{n-1}(|x|)} \De_{J},
\end{align} 
for the convex hull of the barycenters of the simplices corresponding to the elements of this chain: see Figure~\ref{fig:fund1}.
Note that $\ov\st_J^\De(|x|)$ lies in the boundary of $\De_J$.

\begin{figure}[htbp] %  figure placement: here, top, bottom, or page
   \centering
  \includegraphics[width=5in]{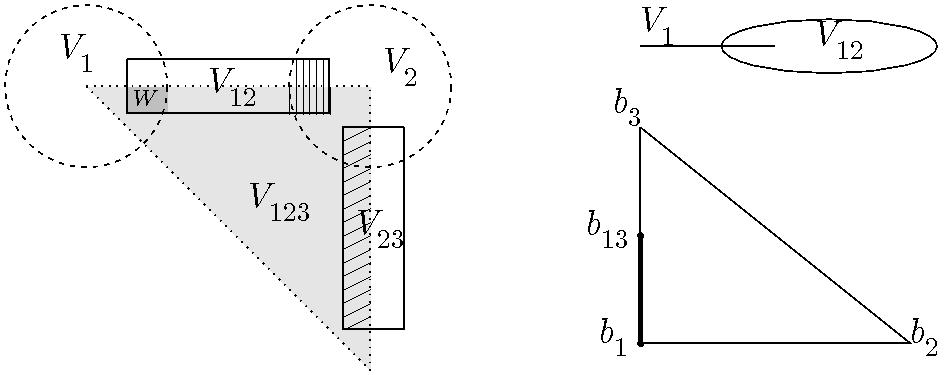} 
   \caption{The figure on the left is schematic, showing the sets $|V_I|$ rather than their (disjoint) lifts $V_I$; the sets $V_{2,12}\subset V_2$ and $V_{23,123}\subset V_{23}$ are hatched, while for $x$ in the  shaded set $W$, we have $ I_{\min}(|x|) = \{1\},  I_1(|x|) = \{1,2\}, I_{\max}(|x|) = \{1,2,3\}$.
 The top right illustrates the change in dimension from $V_1$ to $V_{12}$, while the bottom right shows $\ov\st_J^\De(|x|)$ for  $x \in \TV_{1,123}\cap \TV_{13,123}$.}
   \label{fig:fund1}
\end{figure}

The domain $\p'\, Y_{\Vv, J, \,\ve}\subset \p Y_{\Vv, J, \,\ve}$ of the collar map $c^Y_J$ contains all the points in the image of the injections $\io_{EV}$ in \eqref{eq:ioEV}, as well as the lifts to 
$Y_{\Vv, J, \,\ve}$ of all points in 
 $\im(c^Y_H)$
 where $I\subsetneq H \subsetneq J$.  
 To obtain points with more general $t$-coordinate 
we  consider the following {\bf rescaling operation}.
Suppose given $t\in \De_J$ and a tuple $\mu_J= (\mu_j)_{j\in A}$  such that $\mu_j=1$ if $ j\notin J$, $\mu_j>0,\;\forall j$, and $\mu_J\cdot t \in \De_J$.
Then for any element $(e,x;\,t)\in Y_{\Vv,J,\,\ve}$,
%For any map $T: \De_J\to \De_J$ and any element $(e,x; t)\in Y_{\Vv,J,\,\ve}$  such that $(T_i(t) = 0)\Longrightarrow (s_i(x) = 0)$ 
there is a commutative diagram
\begin{align}\label{eq:TDe}
\xymatrix {
(e,x;\, t) \ar@{|->}[d]_{\pr_{E_{A\less J}}\times \pr_V} \ar@{|->}[r]^{\mu_J\,\cdot \qquad\; } %^{T^Y\qquad\quad} 
&  \bigl((\mu_J)^{-1}\cdot e,x; \, \mu_J\cdot t\bigr)\ar@{|->}[d]^{\pr_{E_{A\less J}}\times \pr_V }\\
(e_{A\less J}, x)   \ar@{|->}[r]^{=} & (e_{A\less J}, x) ,
}
\end{align}
where we assume $\|(\mu_J)^{-1}\cdot e\| < \ka\, \eps_{I(x)}$ so that
the top arrow  has target in $Y_{\Vv, J,\,\ve}$.
\MS

The following result concerns a reduction $\Vv$ plus choice of constants $\ve$ that are {\bf compatible} in the sense of Definition~\ref{def:compatV}. In particular this means that property (i) in Proposition~\ref{prop:Y} holds, and that $(\Vv, \ve)$ is compatible with a fixed choice of local product structures as in \eqref{eq:subm0}. The proof is given in Lemma~\ref{le:col} below.

 \begin{prop}\label{prop:col}    Let  $(\Vv,\,\ve)$   be a  compatible reduction of an atlas $\Kk$. Then for each $J\in \Ii_\Kk$ there is an open subset
 $\p'\,  Y_{\Vv,J,\, \ve}\subset \p\,  Y_{\Vv,J,\, \ve}$, a constant $w_J>0$, and a $\Ga_A$-equivariant embedding 
\begin{align}\label{eq:coll1}
c_{J}^Y: \p'\,  Y_{\Vv,J,\, \ve}\times [0, w_J) \to Y_{\Vv,J,\, \ve},\quad
\bigl((e,x;t),r\bigr) \mapsto  (e',x'; c_J^\De(t,r)),
\end{align}
with the following properties:\footnote
 {\, The precise definition of $\p'\,  Y_{\Vv,J,\, \ve}$ may be found in \eqref{eq:dkY} and \eqref{eq:colYY}.
By slight abuse of language we will call $\p'\,  Y_{\Vv,J,\, \ve}$ the {\bf domain} of $c_J^Y$.
}
\begin{itemlist}\item  
$\p'\,  Y_{\Vv,J,\, \ve} \subset \bigl\{(e,x;t): \exists I\subsetneq J,  x^0\in \TV_{IJ}, \mbox{ s.t.}\;\;  x\approx x^0,\ 
 t\in \ov\st^\De_J(|x^0|)\bigr\}$,
\item 
$c_{J}^Y$  is {\bf compatible with the projections} to $E_{A\less \bullet}$  as follows:
 we have
 \begin{align}\label{eq:coll19}
\io_{EV}(E_{A\less I, \eps_I}\times \TV_{IJ}) \; \subset\; \p'\,  Y_{\Vv,J,\, \ve},\quad  \forall\,\, I\subsetneq J.
\end{align}
  Further, % and 
\begin{align}\label{eq:coll20}
& c_{J}^Y\bigl((e,x;t), 0\bigr) = (e,x;t),\quad \forall (e,x;t)\in \p'\,Y_{\Vv,J,\, \ve},\\ \notag
& \pr_{E_{J\less I}}(e) = 0\;\; \Longrightarrow\;\; c_{J}^Y\bigl((e,x;t), r\bigr)  =\bigl( e,x; c^\De_J(t,r)\bigr), \mbox{ and }
\end{align}
\begin{align}\label{eq:coll2}
&\pr_{E_{A\less I}} \circ c_{J}^Y\bigl(\io_{EV}(e,x), r\bigr) =  \pr_{E_{A\less I}} (e),\quad \forall (e,x)\in E_{A\less I, \eps_I}\times \TV_{IJ},\end{align}
\item the sets $\p'\,  Y_{\Vv,J,\, \ve}$ are {\bf compatible with covering maps} as follows:
 if $I\subsetneq H\subsetneq J$, then   the relevant part of the image of $c^Y_H$ lifts to the domain $\p'\, Y_{\Vv, J,\,\ve}$ of $c^Y_J$.  More precisely, 
if $(e,x;t)\in \p'\,  Y_{\Vv,J,\, \ve}$ has $x\in \rho_{HJ}^{-1}(\TV_{IH})\cap \TV_{HJ}$\footnote
{
\, By \eqref{eq:VIJ}, when $I\subsetneq H\subsetneq J$ any two of the sets $\TV_{IJ}, \TV_{HJ},  \rho_{HJ}^{-1}(\TV_{IH})$ determine the third.}
 and $t\in \p_{H\less I} \De_H$, then 
$(e,\rho_{HJ}(x);\,t)$ is in the domain $\p'\,  Y_{\Vv,H,\, \ve}$ of $c^Y_H$ and
for all $r\in [0,w_H)$ there is 
 $(e',x';\,t') \in \p'\,  Y_{\Vv,J,\, \ve}$ with $x'\in \TV_{HJ}$ such that 
 \begin{align}\label{eq:collH}
c^Y_H((e,x;t),r) = \bigl(e',\rho_{HJ}(x');\, t'\bigr)  \; \in \; Y_{\Vv,H,\ve}.
 \end{align}
 Further,  the restriction of $c^Y_H$ to   
 $ Y_{\Vv,H,\, \ve} \cap \pr_V^{-1}( \TV_{IH}\cap V_{HJ} )$  has a well defined lift 
 (also called $c^Y_H$) to $Y_{\Vv, J,\, \ve} $ such that for all $x\in  \TV_{IJ}\cap \TV_{HJ}$
 \begin{align}\label{eq:coll4}
 (\pr^Y_{HJ})_*\bigl( c^Y_H(e,x;t),r\bigr) = \bigl( c^Y_H(e,\rho_{HJ}(x),t),r\bigr) \; \in \; Y_{\Vv,H,\ve},  \quad r\in [0,w_H), 
 \end{align}
where $ (\pr^Y_{HJ})_*$ is as in \eqref{eq:rhoIJ*}.
\item each $\p'\,  Y_{\Vv,J,\, \ve} $  is {\bf invariant under rescaling} as follows:  
if  $(e,\, x;\, t)\in \p'\, Y_{\Vv,J,\, \ve}$ where $t\in \ov\st_H^\De(|x|)$
 then for all $\mu_H$ as in \eqref{eq:TDe} such that  $\mu_H\cdot t\in  \ov\st_H^\De(|x|)$ we have
 $$
 \mu_H\cdot \bigl(e,x; \, t \bigr): = \bigl(\mu_H^{-1}\cdot e,x; \, \mu_H\cdot t \bigr)\in \p'\, Y_{\Vv,J,\, \ve}
 $$ 
 and
\begin{align}\label{eq:rescal}
& \pr_{E_{A\less H} \times V}\circ c^Y_J\bigl((e,\, x;\, t), r\bigr) = \\ \notag
& \qquad  \pr_{E_{A\less H} \times V}\circ c^Y_J\bigl( (\mu_H^{-1}\cdot e,x; \, \mu_H\cdot t ), r\bigr) \in E_{A\less H}\times V_J;
\end{align}
\item  the collar maps $c^Y_J$ are {\bf compatible with shrinkings} as follows: if $(\Vv', \ve')\sqsubset (\Vv, \ve)$ is another compatible reduction, then there are constants $0<w_J'< w_J$ such that the restrictions of the maps $c^Y_J$ 
to $\p'\, Y_{\Vv', J, \,\ve'}: = \p Y_{\Vv', J, \,\ve'} \cap \p'\, Y_{\Vv, J, \,\ve}$ have all the above properties with respect to the constants $w_J'$.
\item  if $\Kk$ is oriented then the collar map $c^Y_J$ is  compatible with the natural induced orientation on its domain and range.
\end{itemlist}
\end{prop}

By Lemma~\ref{le:Veps}, any reduction $\Vv''$ has a shrinking  $\Vv\sqsubset \Vv''$ that is compatible with respect to some choice of constants $\ve$ and hence supports 
a collar $\bigl(c^{Y}_J\bigr)_{J\in \Ii_\Kk}$ as in  Proposition~\ref{prop:col}.  
Further,
we  show in Corollary~\ref{cor:col}  that  $(\Vv^\infty,\ve^\infty)$ has a further nested shrinking that is collar compatible 
 in the following sense.

\begin{defn}\label{def:collcompat} 
Let $(\Vv^\infty,\ve^\infty)$ be a compatible reduction, with collars $\bigl(c^{Y,\infty}_J\bigr)_{J\in \Ii_\Kk}$.  
We say that a shrinking $(\Vv,\ve)\sqsubset (\Vv^\infty,\ve^\infty)$ is {\bf collar compatible} if it is compatible as in Definition~\ref{def:compatV} and if
for all $J\in \Ii_\Kk$ the collar map $c^{Y,\infty}_J$ restricts to a collar $(c^Y_J)_J$ on $(\Vv,\ve)$ whose widths $w_J$ 
satisfy
$ \sqrt {\eps_I}< w_J$ for all $I\subsetneq J$.  
\end{defn}

\subsection{Construction of the category $\bM$ and functor  $\Sss:\bM\to \bE_A$}\label{ss:M}
   
%    We use these manifolds to define the structural maps in the category $\bM$ and functor  $\Sss:\bM\to \bE_A$ as follows.

In \eqref{eq:Mm}, the  component $M_J$ of $\Obj_\bM$ was defined as
 \begin{align}\label{eq:MJ}
M_J = E_{A\less J,\eps_J}\times V_J,
\end{align}
which is a manifold of dimension $d + \dim E_A$.
We take $M_{IJ}: = E_{A\less J,\eps_J}\times V_{IJ}$, and define the  map $\tau_{IJ}: \TM_{IJ}\to M_{IJ}$  that attaches $M_J$ to $M_I$
to have domain a suitable open subset $ \TM_{IJ}\subset M_J$ and to extend the atlas structural map
$$
\rho_{IJ}: \{0\}\times \TV_{IJ}\to \{0\}\times V_{IJ} \subset M_{IJ}\subset M_I.
$$
We require that $\tau_{IJ}$ is a $\Ga_A$-equivariant covering map, induced by a free action of $\Ga_{J\less I}$.
Further, to obtain a category, these maps must be   compatible with composition:
 i.e. for $I\subset H \subset J$ we need
\begin{align}\label{eq:taucomp}
\tau_{HJ}\circ \tau_{IH} = \tau_{IJ} \;\; \mbox{ on }\;\; \TM_{IJ}\cap \TM_{HJ} \cap \tau_{HJ}^{-1}(\TM_{IH}) = \TM_{IJ}\cap \TM_{HJ}.
\end{align}
(Note that by  \eqref{eq:VIJ} any two of the sets $\TM_{IJ}, \TM_{HJ}, \tau_{HJ}^{-1}(\TM_{IH})$ determine the third.)
For maximal elements $J$ of $\Ii_\Kk$, we then define $\Sss_J: M_J\to E_A$ as the projection
\begin{align*}%\label{eq:WsJ2}
\Sss_J: M_J\to E_A,\quad& (e_{A\less J},x)\mapsto  (e_{A\less J}, s_J(x)). 
\end{align*}
The above  should be considered as the default formula for $\Sss_J$, that holds at points $(e_{A\less J},x)\in M_J$ where $x$ is far from any overlap $V_{JK}$ with $J\subsetneq K$.  However, 
 in general it must be modified in ways explained in Example~\ref{ex:MJ} below.
\MS

Before giving  the general formulas for $\mu_J, \tau_{IJ}, \Sss_J$, we discuss an example. Part (i) shows the role of the collar in constructing $\tau_{IJ}$, 
and also how to achieve the closed graph condition in \eqref{eq:clgraph}, while 
part (ii) explains the relevance of the collar's compatibility with projections and  rescaling to the proof of the composition rule~\eqref{eq:taucomp}.  The usefulness of considering
multiple collar compatible shrinkings $(\Vv^n, \ve^n)$ will also become apparent.
We will use  cutoff 
functions $\bigl(\chi_{IJ}: V_I\to [0,1]\bigr)_{I\subsetneq J}$ of the following form:   if $\Vv\sqsubset \Vv'$  we  have 
\begin{align}\label{eq:be0}
{\textstyle \supp(\chi_{IJ})\subset \bigcup_{I\subsetneq H\subset  J}V_{IH}',\quad\mbox{ and } \;\;  
 \bigcup_{I\subsetneq H\subset  J} \ov V_{IH} \subset \intt (\chi_{IJ}^{-1}(1)).}
\end{align}
\MS

\begin{example}\label{ex:MJ}\rm {\bf (Attaching the $M_J$).}\, 
We begin by considering the case when the isotropy groups are trivial, so that $\tau_{IJ}: \TM_{IJ}\to M_{IJ}$ is a homeomorphism.
It is then easiest to define its inverse
$$
\al_{IJ}: = \tau^{-1}_{IJ}: M_{IJ}\to \TM_{IJ},
$$
since $M_{IJ}\subset M_I$ is defined to be the product $E_{A\less I,\de_I}\times V_{IJ}$ (where $V_{IJ}$ is defined in \eqref{eq:VIJ})
while $\TM_{IJ}$ will simply be defined as the image
$\al_{IJ}(M_{IJ})$. 
As in \cite{MW2}, we use the notation 
$\phi_{IJ}: = \rho_{IJ}^{-1}: V_{IJ}\to \TV_{IJ}$ for the inverse of the atlas structural map $\rho_{IJ}$.
\MS

\NI
(i)  Consider the case when 
there are two basic charts with labels $1,2$. 
Then $\bM$ has  three components:\footnote
{
\, Here we simplify notation by writing $M_{12}: = M_{\{1,2\}}, M_{1,12}: = M_{\{1\} \{1,2\}}$ and so on.
For an example of this construction, see \S\ref{ss:exam}.
}
 $$
M_1= E_{2,\de_2}\times V_1,\quad  
M_2 = E_{1,\de_1}\times V_2,\quad  M_{12}: = V_{12},
$$
where we assume 
$(\Vv, \vd)$ is collar compatible as in Definition~\ref{def:collcompat}. 
In particular, this means 
that for $i=1,2$ we have $\de_i < w_{12}^2$, where $w_{12}$ is the  width of the collar $c^Y_{12}$.
We first  define
 the attaching maps $\al_{1,12}$ and $\al_{2,12}$, then define  the sections  $\Sss_I$. and finally  prune the sets $M_{12}$ so as to satisfy the closed graph condition.

We define $\al_{1,12}$ as a composite $M_{1,12}: = E_{2,\de_2}\times V_{1,12}\; \to \;Y_{\Vv, 12,\vd}\; \to \;M_{12}$:
 \begin{align}\label{eq:al112} \notag
\al_{1,12}\bigl( (e_2,x)\bigr) &= \pr_V\Bigl(c_{12}^Y\bigl(\io_{EV}(e_2,x),\, r\bigr)\Bigr) \;\;  \mbox{ with } r: = \sqrt{ \|e_2\|} \\
& = \pr_V\Bigl(c_{12}^Y\bigl((s_1(x), e_2, \phi_{IJ}(x); b_1),r\bigr)\Bigr) \;\; \\ \notag
& = \pr_V\Bigl(\bigl(e_1',e_2, x'; (1-\ r, r)\bigr)\Bigr),\;\;\\ \notag
& = x'   \in V_{12} = M_{12}, 
\end{align}
where $\io_{EV}$ is the map in \eqref{eq:ioEV},
$b_1 = (1,0)$ is the barycenter of $\De_1$ considered as a point in  $\De_2$, 
 we have used formula \eqref{eq:collDe} for $c^\De_{12}$, and we have used the fact
from \eqref{eq:coll2} that $e_2$ is unchanged by $c_{12}^Y$.    We note the following.
\begin{itemlist}\item   Because  $(\Vv,\de)$ is collar compatible, Definition~\ref{def:collcompat} implies that
 the collar width satisfies $w_{12} > \sqrt{ \|\de_2\|}> r$.  Hence the 
 element $c_{12}^Y\bigl((s_1(x), e_2, \phi_{IJ}(x); b_1),r\bigr)$ is well defined for all $(e_2,x)\in M_{1,12}$.
\item Because the collar variable $r: = \sqrt{ \|e_2\|}$ vanishes 
 for the points   $(0,x)\in M_{1,12}$,  the map $ \al_{1,12}$
extends the inclusion
$\phi_{IJ}: V_{IJ}\to \TV_{IJ}$ by \eqref{eq:coll20}, as is required by  Proposition~\ref{prop:M}~(i).  Further, 
 for small enough $\de_i$ the closures of the  images of $ \al_{1,12}$ and the similarly defined map $ \al_{2,12}$ are  disjoint.
\item  Because  the points $(e,x;t)\in Y_{\Vv, J,\,\ve}$ satisfy $s_J(x) = t\cdot e_J$ and we chose $r = \sqrt{ \|e_2\|} $, 
we have  $$
r \,\|e_2\| =  (\|e_2\|)^{3/2} = \|s_2(x')\|,
$$ so that $r = \|s_2(x')\|^{1/3}$ is determined by $x'$.
% we can make a similar definition for 
\item To see that
$ \al_{1,12}$  
is injective, notice that because $c_{J}^Y$ is injective it suffices to check that the other elements, $e_1', e_2,r$ that appear in the tuple
$\bigl(e_1',e_2, x'; (1-r, r)\bigr) \in Y_{\Vv,12,\,\ve}$ are determined by $x'\in V_2$.   But we saw above that $r=  \| s_2(x')\|^{1/3}$, so that 
the equations $s_1(x) = (1-t)e_1', \ s_2(x) = te_2$ determine $e_1', e_2$.
\end{itemlist}

We now define $\Sss_{12}:= s_{12}: M_2=V_2\to E_{12}$, and define $\Sss_i $ on $\al_{i,12}^{-1}(\TM_{i,12})$ by pullback: thus on this set
$$
\Sss_i(e_j,x) = \bigl(\|e_j\|^{1/2} e_j,\, s_i(\al_{i,12} (e_j,x)\bigr),\quad i\ne j,
$$
has the form claimed in \eqref{eq:sJ}.
We then extend $\Sss_i$ to the rest of $M_i$ by patching it to the default map  $(e_j,x) \mapsto (e_j\, s_i(x))\in E_j\times E_i = E_{12}$
via  the
 cutoff  $\chi_{i}$  
 in  \eqref{eq:be0}:
\begin{align}\label{eq:Sss1}
\Sss_i(e_j,x) & = \chi_{i,12} (x) \bigl(\|e_j\|^{1/2} e_j,\, s_i(\al_{i,12} (e_j,x)\bigr)  + (1- \chi_{i,12} (x))\bigl(e_j,\, s_i(x)\bigr) \in E_{12}.
\end{align}
For this to be well defined, we need  $\al_{i,12}$ to extend to a neighborhood of $M_{i,12}$ in $M_i$.  But we can always assume that $\Vv$ is a shrinking of some other reduction $\Vv'$.  Then 
because the collar extends over $\Vv'$ 
we may extend $\al_{i,12}$ over the corresponding set $M_{i,12}'$ by using the above  formula \eqref{eq:al112}.
It is then clear that $\Sss_i^{-1}(0) = \{0\}\times s_i^{-1}(0)$. 
\MS

It remains to arrange that $\al_{i,12}$ has closed graph.  
Note that its restriction to $\{0\}\times V_{i,12}$ does have closed graph because $\Vv$ is a reduction of a good atlas $\Kk$, which among other things implies that the realization $|\Vv|\subset |\Kk|$ is Hausdorff; see the discussion around \eqref{eq:piK}, \eqref{Vv}.
Denote by 
\begin{align}\label{eq:Fr}
\Fr(M_{i,12}): = cl(M_{i,12})\less M_{i,12}
\end{align}
 the frontier of $M_{i,12}$ in $M_i$, where, as usual, $cl$ denotes the closure.
As above, we may assume that $\al_{i,12}$ extends to a homeomorphism $\al_{i,12}: cl(M_{i,12})\to V_{12}'$, which evidently has a closed graph.   Hence  it suffices  to arrange that
$V_{12}\cap \al_{i,12}(\Fr(M_{i,12}))  = \emptyset.  $  But $$
V_{12}\cap \, cl\bigl(\al_{i,12}(\Fr(M_{i,12}))\bigr) \subset cl(\TV_{i,12})\less \TV_{i,12}
$$
 is a closed subset of $V_{12}$ that is disjoint both from $cl(\TV_{j,12})$ (by the separation property of the sets $\TV_{1,12}, \TV_{2,12}$) and from the zero set
$s_{12}^{-1}(0)$ (because    $|\Vv|$ is Hausdorff).   Hence, as in Figure~\ref{fig:5}, if this set is nonempty we can simply remove it from $V_{12}$, i.e. we replace $V_{12}$ by 
\begin{align}\label{eq:prune}
V_{12}\less \textstyle{\bigcup_{i=1,2}} cl\bigl(\al_{i,12}(\Fr(M_{i,12}))\bigr).
\end{align}
 \begin{figure}[htbp] %  figure placement: here, top, bottom, or page
   \centering
   \includegraphics[width=4in]{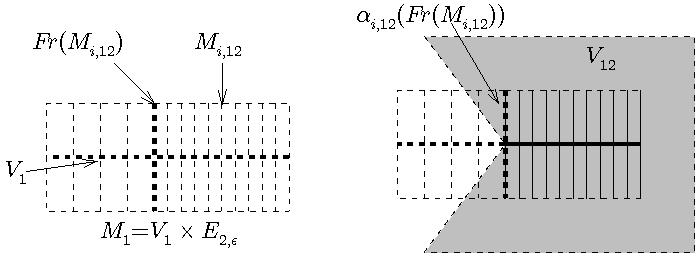} 
   \caption{Removing points from $V_{12}$ so that $\al_{i,12}$ has closed graph.  Since $V_{12}$ is open the point where the two heavy lines cross is not in $V_{12}$.  The set $\TM_{i,12}\;\cong\; \TV_{i,12}\times E_{i,\eps_i}\subset V_{12}$ is hatched.}
   \label{fig:5}
\end{figure}

\NI
(ii)  Now suppose that the atlas $\Kk$ has three basic charts with labels $1,2,3$, so that the sets $V_I$ in the reduction $\Vv$  intersect as in  Figure~\ref{fig:fund1}.
We assume that the isotropy is trivial and all $E_i\ne 0$, and again  explain how to choose the constants $\de_i$, and define the attaching maps $\al_{IJ}$ and sections $\Sss_I$ that involve the vertex $1$, 
namely those with labels $1,12,13,$ and $123$.  It is now convenient to assume that we have four nested 
collar compatible  shrinkings  $(\Vv^1,\ve^1) \sqsubset  (\Vv^2,\ve^2) \sqsubset  (\Vv^3,\ve^3) \sqsubset  (\Vv^4,\ve^4)$ of $\Vv'$.  
Correspondingly, with $I\subset \{1,2,3\}$ and $k\le \ell \le 4$ we define
$$
M_I^k: = E_{I, \eps_i^k}\times V^k_I,\quad M_{IH}^{k,\ell} =  E_{I, \eps_i^k}\times V^{k,\ell}_{IH}\; \subset\;  M_{I}^{k},
$$
where
$$
V^{k,\ell}_{IH} = V_I^k \cap V_{IH}^\ell = V_I^k  \cap \pi_\Kk^{-1}\bigl(\pi_\Kk(V_H^\ell)\bigr)
$$
We aim to define
a category with basic domains of the form  $M_I^{|I|}$ and compatible morphisms
$\al_{IH}: M_{IH}^{|I|,|H|} \to M_H^{|H|}$.
However, to make these continuous and to define the corresponding maps $\Sss_I$ we have to define transition functions 
on larger  sets such as $M_{IH}^{|I|+1,|H|}$.  As in (i), we will first define suitable maps $\al_{IH}$  and sections $\Sss_I$, and then will prune domains
to achieve the closed graph condition.\MS

If $|I|=1, |H|=2$ we define $\al_{IH}: M_{IH}^{1,2}\to M_H^2$ as in (i) above.  
These  methods also  easily adapt to define 
the maps $\al_{IH}$ for $|I|= 2$
and  
$\Sss_I: M_I\to E_A$ for $|I|\ge 2$. % and compatible transition maps $\al_{IH}$ for $|I|\ge 2$.
Indeed, if $J: = \{1,2,3\}$ then % $\Sss_J$ is determined by \eqref{eq:sJ}, while 
$$
\al_{1i, J}:  M_{1i,J}^{2,3} \to   M_J^3  %: = \mu_{J}\circ \al_{1k,J}
$$
can be defined much as in \eqref{eq:al112}.  The only new point is that because $\De_{1i}$ is a $1$-simplex,
we have to decide how to lift $V_{1i, J}$ to $\p Y_{\Vv, J,\,\ve}$ in order to use the collar.  For now, we use the 
 default choice given by the embedding $\io_{EV}$ in \eqref{eq:ioEV}, i.e. we embed it over the barycenter $b_{1i}$ of $\De_{1i}$ which we identify with the corresponding point $\io_{1i,J}(b_{1i})$ in $\De_{J}$.  Thus with $i\ne j,\  i,j \in \{2,3\}$, we define 
 \begin{align}\label{eq:al1k123}
\al_{1i,J}: &\; M_{1i,J}^{2,3}  \to  M_J^3:\; (e_j,x)\mapsto x' \;\;\mbox{ as follows: } \\ \notag
 E_{3, \eps^2_i}\times V_{1i}^{2,3}\ni &\; (e_j,x) \longmapsto % \\ % \begin{array}{l}  \\
 %c_{J}^Y\Bigl(\bigl( 2\cdot s_{1i}(x), e_j, \phi_{1i,J}(x); b_{1i}\bigr), r\Bigr),\quad r= \sqrt{ \|e_j\|}\\ \notag
 c_{J}^Y\Bigl(\bigl( \io_{EV}(e,x)\bigr), r\Bigr),\quad r= \sqrt{ \|e_j\|}\\ \notag
 & =   \bigl(e'_{1i}, e_j, x'; c^\De_J(b_{1i}, r)\bigr) \in Y_{\Vv^3, J,\,\ve^3} \\ \notag
 &  \longmapsto x' \in M^3_J. 
\end{align}
Since $r$ depends on $e_3$ and hence on $s_3(x')$ as above, it follows as before that 
 $\al_{1i,J}$ is injective. Notice also that if $x\in V_{1i, J}^{2, \ell }$ the point $\phi_{1i,J}(x)$ would lie in $\TV^\ell_{1i,J}$
% 
% $\p Y_{\Vv^\ell, J,\,\ve^3}$,
 as would its image $x'$ under the collar map since the collar maps preserve the shrinkings by  Proposition~\ref{prop:col}.
Taking $\ell = 4$ here, we may therefore 
 define $\Sss_{12}$ by pullback from $\Sss_{J}$ on $M_{12,J}^{2,3}$, tapering it off to the product
 $s_{12}\times \pr_{E_j}$ outside the larger set $E_{j,\eps_{1i}} \times V_{1i,J}^{2,4}$ by using the cutoff functions $\be_{1i,J}$ as in \eqref{eq:Sss1}.

  The main new task is to  define
$$
\al_{1,J}: M_{1,J}^{1,3}\to M_{J}^3,\mbox { so that } \al_{1,J}: = \al_{1i,J}\circ \al_{1,1i} \mbox{ in } M_{1,J}^{1,3}\cap M_{1,1i}^{1,2}.
$$
If $x\in V_{1,J}^{1,3}\less \bigcup_{i=2,3} V_{1,1i}^{1.3}$, (i.e. $x$ is \lq\lq far" from $V_{1,1i}^{1,2}$) then   we may define
\begin{align}\label{eq:al1123i}
\al_{1,J}(e_{23},x) = \pr_{E_3\times V}\Bigl(c^Y_{J}\bigl((s_1(x), e_{23}, \ \phi_{1,J}(x), \ b_1), r\bigr)\Bigr),\;\; r =\sqrt{\|e_{23}\|},
\end{align} 
as in \eqref{eq:al112}.
Hence the lift of $\al_{1,J}(e_{23},x)$ to $Y_{\Vv^3,  J, \,\ve^3}$  lies over the ray 
$c^\De_{J} \bigl(b_1 \times [0,w_0]\bigr)\subset \De_J$.
On the other hand, the composite $\al_{1i,J}\circ \al_{1,1i}$ first uses the collar $c^Y_{1i}$ for $b_1$ in $\De_{1i}$ and then the collar $c^Y_J$ of $b_{1i}$ in $\De_J$, and hence its natural lift to $Y_{\Vv^3,  J, \,\ve^3}$  is rather different.
  We  interpolate between these two maps 
as follows,  
where  we take $i=2$ for clarity, and use cut-off functions $\be_{1,12}$ as in \eqref{eq:be0}, with support in $V_{1,12}^{1,3}$ and that equal $1$ near the closed set $\ov V_{1,12}^{1,2}\sqsubset V_{1,12}^{1,3}$. Thus with $x\in V_{1,12}^{1,3}\cap V_{1,J}^{1,3}$, we define
\begin{align}\label{eq:al1J}\notag 
(e_{23},x) &\longmapsto\; c^Y_{12}\Bigl(\bigl(s_1(x),e_{23},\phi_{1,12}(x); b_{1}\bigr), \ r \Bigr) \mbox{ where }  r: = \be_{1,12}(x)\sqrt{\|e_2\|}\\ \notag
 &\hspace{1in}  =: (e_1', e_{23},x'; 1-r,r) \; \in \; Y_{\Vv^3,12,\,\ve^3} \\ 
& \longmapsto\; c^Y_{J}\bigl( (e_1',e_{23}, \phi_{12,J}(x');\, 1-r,r), r'\bigr) =:  \bigl(e_1'', e_{23}, x''; t''\bigr)\in Y_{\Vv^3,J,\,\ve^3} \\ \notag
&\hspace{1in}  \mbox{ where }\;\; %\begin{array}{l}  r: = \be_{1,12}(x)\sqrt{\|e_2\|}\\
r': = \max \bigl((1-\be_{1,12}(x))\sqrt{\|e_{2}\|}, \sqrt{\|e_3\|}\bigr)\\ \notag
& \longmapsto\;  x''=:\al_{1,J}\bigl((e_{23},x)\bigr) \; \in \; M_{J}^3 =  V_{J}^3 %\end{array}
\end{align}
Note the following.
\begin{itemlist}\item   Here (as in \eqref{eq:coll4}) we consider $c^Y_{12}$ to be the lift  to $\p'\, Y_{\Vv,J,\,\ve}$ of the collar 
for $\p'\, Y_{\Vv,12,\,\ve}$, and the  
  composite  $c^Y_J\circ c^Y_{12}$ is defined by \eqref{eq:collH}.
\item
The above map $(e_{23},x)\mapsto x''$ is continuous,  and equals that given in \eqref{eq:al1123i} when $\be_{1,12}(x) = 0$ because $\|e_{23} \| = \max\{ \|e_i\|: i=2,3 \}$ by definition. 
\item If
 $x\in V_{1,J}^{1,3}\cap V_{1,12}^{1,2}\subset V_{1,J}^{1,3}\cap (\be_{1,12}^{-1}(1))$,  then $$
 \al_{1,12}(e_{23},x)= \al_{12,J}\circ \al_{1,12}(e_{23},x).
 $$
Indeed, the invariance of the collar  under rescaling in \eqref{eq:rescal}  shows that applying the second collar map at $(1-r,r)$  with $r' = \sqrt{\|e_3\|}$
 and then projecting to $M_J^3$ gives the same result as rescaling, then applying the second collar at $b_{12}$ with the same $r'$, and then projecting to  $M_J^3$. Note that 
by \eqref{eq:coll2} this last claim holds even if $e_3 = 0$, so that the second collar map has $r'=0$ when $\be_{1,12}(x) = 1$.
\item  It remains to check that this map $(e_{23},x)\mapsto x''$ is injective. Since the first two maps in \eqref{eq:al1123i} are injective, it suffices to check that the projection
$(e_1'', e_{23}, x''; t'')\to x''$ is injective.
But  both collar maps preserve $e_2,e_3$ by the extended corner control in \eqref{eq:coll2}.
Hence, for $i=2,3$  we know  $\|e_i\|$ and therefore $t_i''$  from $s_i(x'') = t_i'' e_i$. Since  $\sum_i t_i''=1$, we therefore know $t''$
and hence also $e'' = (e_1'', e_{23})$.
\end{itemlist}

As before, we define $\Sss_1$ by pullback via $\al_{1,*}$ over $E_{23,\eps_1}\times \bigcup _{\{1\}\subsetneq  J} V_{1,J}$, extending to the rest of $M_1$ via a cutoff function $\be_{1,J}$.
However, to do this we need the pullback of $\Sss_1$ to be compatibly defined on a set that is larger than that on which we ultimately want $\Sss_1$ to equal the pullback.  But we can arrange that the identity 
$\al_{1,J} = \al_{12,J}\circ \al_{1,12}$ actually holds on a neighborhood of the closure of $V_{1,12}^{1,2}\cap V_{1,J}^{1,3}$, 
since in \eqref{eq:al1J} $\be_{1,12} = 1$  on a  neighborhood of $\ov V_{1,J}^{1,3}$, and we can always extend the domain of 
$\al_{1,12}$ to $V_{1,2}^{1,3}$.  Therefore we can imitate the formula in \eqref{eq:Sss1}.  

It remains to prune the domains $M_J$ so as to achieve the closed graph condition for all maps $\al_{IJ}$.  We will do this by downwards recursion on $I$. Thus, first taking $|I|=2$, we remove points from $M_{123}$ so that the maps $\al_{I,123}$ have closed graph, and then with $I = \{i\}$ remove points from all $M_J$ with $|J|\ge 2$ so that the maps $\al_{i,J}$ have closed graph.  At each stage we use the analog of formula \eqref{eq:prune}, removing
 from $V_J$ all points in $\cl_{V_J}\bigl(\al_{IJ}(\Fr(M_{IJ}))\bigr)$ where $\Fr(M_{IJ})$ is the frontier of $M_{IJ} = M_{IJ}^{|I|,|J|}$ in $M_I: = M_I^{|I|,|I|}$.
 Since  $\Fr(M_{IJ})\subset M_{IJ}^{|I|,|J|+1}$, the
 points removed   lie in the image of the extension of the collar over $Y_{J,\Vv^{|J|+1},\,\ve}$ but not in the image of the collar over 
 $Y_{J,\Vv^{|J|},\,\ve}$.  Hence, because the $\al_{IJ}$ are defined in terms of the collar map,  these points do not lie in  $\im \al_{HJ}$ for any $H\subsetneq I\subsetneq J$.  Thus the different steps do not interfere with each other.

\MS

\NI (iii)  If the isotropy is nontrivial, then we can still adopt the above approach, but now must interpret $\al_{IJ}$  as a local $\Ga_x$-invariant inverse to $\tau_{IJ}$ and then define $\TM_{IJ}$ to be the $\Ga_A$-orbit of its image. 
Further, we must make equivariant constructions, 
but this is possible since the collar is  equivariant, so that all the above formulas are appropriately equivariant.   In particular, the sets that must be removed 
in order to achieve the closed graph condition for the local inverse  $\al_{IJ}$ are $\Ga_x$-invariant, so that we can arrange that
$\tau_{IJ}$ has closed graph by removing its $\Ga_A$ orbit.  \hfill$\er$ 
 \end{example}

The next  result is essentially a restatement of Proposition~\ref{prop:M}, though it gives a little more information on the nature of the map $\tau_{IJ}$.
  Since the proof is rather complex, we describe the strategy here. 
%The role of the key ingredients  have already  b, 
As  in Example~\ref{ex:MJ}, we define the maps $\al_{IJ}$ by downwards recursion on the cardinality $|I|$ of the index set $I$, shrinking domains at each step.  In order to extend the interpolation formula for $\al_{IJ} = \tau_{IJ}^{-1}$ given in \eqref{eq:al1J} to a chain of inclusions $I_0\subsetneq I_1\cdots \subsetneq I_k$ of length $k>1$,
we apply an iterated sequence of collar maps over a family of paths $\Pp(e,x)$ in the simplex $\De_J$ as described in Step 2 below.  We then define the attaching maps $\tau_{IJ}$ and $\Sss_I$, and check that they have the needed properties.

\begin{prop}\label{prop:M1}  Suppose given a good atlas $\Kk$ on $X$.  Then  there is a reduction $\Vv$ and set of constants $\vd = (\de_I)_{I\in \Ii_\Kk} >0$, such that the following properties hold with
$$
M_I: =  E_{A\less I,\de_I}\times V_I,\quad M_{IJ}: =  E_{A\less I,\de_I}\times V_{IJ}.
$$
\vspace{-.15in}
\begin{itemlist} \item[{\rm (i)}]
For each $I\subset J$ there are open sets $\TM_{IJ}\subset M_J$ and $\Ga_A$-equivariant maps
$$
\tau_{IJ}: \TM_{IJ}\to M_{IJ} 
$$
that restrict to $\rho_{IJ}$ on $\{0\}\times \TV_{IJ}$ and are
such that \begin{itemlist}\item
 $\TM_{IJ}$ is a product $E_{A\less J, \de_I}\times \TM^0_{IJ}$ where 
 $\TV_{IJ}\subset \TM^0_{IJ}\subset V_J$
and $cl(\TM^0_{IJ})\cap cl(\TM^0_{HJ}) = \emptyset$ unless $I,H$ are nested, and
\item  $\tau_{IJ}= \id_E \times \tau_{IJ}^0$ where 
$\tau_{IJ}^0: \TM_{IJ}^0\to E_{J\less I, \de_I}\times V_{IJ}$ has the following properties:
\begin{itemize}\item[-]  $\tau_{IJ}^0(x) = \bigl(0,\rho_{IJ}(x)\bigr)$  for $x\in \TV_{IJ}$;
\item[-] $\tau_{IJ}^0$ has closed graph; and
\item[-]
$\tau_{IJ}^0$ quotients out by a free action of $\Ga_{J\less I}$ that extends to a free action on a neighborhood of $cl(\TM_{IJ}^0)$ in $V_J$.
\end{itemize}
%\item[-]  Moreover  
%\begin{align}\label{eq:taurestr} \tau_{IJ}^0|_{\TV_{IJ}} = \rho_{IJ}.
%\end{align} 
\end{itemlist} 
\item[{\rm (ii)}]   for $I\subsetneq J\subsetneq K$ we have 
\begin{align} \label{eq:tauIJK} \tau_{JK}(\TM_{IK}\cap \TM_{JK}) &= \TM_{IJ}\cap M_{JK},\quad \mbox{  and }\;\;
\tau_{IK}  = \tau_{IJ}\circ \tau_{JK}.
\end{align}
\item[{\rm (iii)}] For each $J$ there is $\Sss_J : M_J\to E_A$ such that for all $J\subset K,$ we have
\begin{align}\label{eq:sJ1}
&\Sss_{J}\circ \tau_{JK} = {\Sss_K}|_{\TM_{JK}} , \quad \Sss_J^{-1}(E_J) \subset \{0\}\times V_J  \; \mbox{ and }  \\  \notag
& \qquad \qquad %\Sss_J^{-1}(0) =\bigl \{(0,x)\ \big| \ s_J(x) = 0\bigr\}.
\Sss_J(0, x)  = (0,s_J(x)).
%e_{A\less J}, x)  = \bigl(e_{A\less J}, \Sss_{J|J}(e_{A\less J}, x)\bigr) \in E_{A\less J}\times E_J, \quad \mbox{ where } \\ \notag
\end{align}
\item[{\rm (iv)}]  If the initial atlas $\Kk$ is oriented, then so is the category $\bM$ defined by the above data as in \eqref{eq:Mm}.
\end{itemlist} 
\end{prop}
\begin{cor} Proposition~\ref{prop:M} holds.
\end{cor}
\begin{proof} 
If the category $\bM$ is defined as in \eqref{eq:Mm} using the above  data $M_I, \TM_{IJ}, \tau_{IJ} $, then all the properties of
Proposition~\ref{prop:M} hold.
\end{proof}

\begin{proof}[Proof of Proposition~\ref{prop:M1}]  {\bf Step 1:}  {\it The set-up and basic strategy of proof.}

Fix a shrinking $\Gg^0 = (G_I^0)_{I\in \Ii_\Kk}$ of the footprint cover. 
By Corollary~\ref{cor:col}
 we may choose a family of nested collar compatible shrinkings as above
$$
\psi^{-1}(\Gg^0) \sqsubset (\Vv^1,\,\ve^1) \sqsubset \cdots \sqsubset (\Vv^{\ka+1},\,\ve^{\ka+1})\sqsubset (\Vv^{\infty},\,\ve^{\infty})  \sqsubset \Uu^\infty,
$$
with collar widths that increase with $m$.  The projection  $\pi_\Kk: U_I^\infty \to |\Kk|$ quotients out by $\Ga_I$ and its restrictions to the $\Vv^m$ have the property that $$
\pi_\Kk(\ov{V^k_I}) \cap \pi_\Kk(\ov{V_J^\ell}) \ne \emptyset\;\Longleftrightarrow \; I\subset J \mbox{ or } 
J\subset I.
$$
  For $m\le \ell$ we denote $V^{m,\ell}_{IJ} : = V_I^m\cap \pi_\Kk^{-1}(\pi_\Kk(V^\ell_J))$, and for $m \le |I|,\; m\le \ell \le |J|$
define
\begin{align}\label{eq:mIJ0}
M_I^m: =  E_{A\less I,\eps_I^m}\times V_I^m, \qquad M_{IJ}^{m,\ell} = E_{A\less I,\eps_I^m}\times V_{IJ}^{m,\ell} .
\end{align}
For each $I\subsetneq J$ and $m\le |I|$ we will define  $\Sss_J: M_J^{|J|}\to E_A$,
a  subset $\TM_{IJ}^{m,\ell}\subset M_J^\ell$ and a $\Ga_A$-equivariant covering map
$$
\tau^{m,\ell}_{IJ}:  \TM_{IJ}^{m,\ell} \to M_{IJ}^{m,\ell}
$$ 
with the following properties:
\begin{itemlist}\item[{\rm (a)}]  $\tau^{m,\ell}_{IJ}$ has product form and closed graph as in (i), and quotients out by a free action of $\Ga_{J\less I}$
on $ (\TM_{IJ}^{m,\ell})^0$;
\item[{\rm (b)}]   for all $m\le m'\le |I|, \ell\le \ell'\le|J|$, $\TM_{IJ}^{m,\ell} \subset \TM_{IJ}^{m',\ell'}$  and  $\tau^{m',\ell'}_{IJ}|_{ \TM_{IJ}^{m,\ell}} = \tau^{m,\ell}_{IJ} $;
\item[{\rm (c)}]  if $I\subsetneq H\subsetneq J$  then $\tau_{IJ}^{|I|,|J|} = \tau_{HJ}^{|H|,|J|}\circ \tau_{IH}^{|I|,|H|}$ on their common domain; moreover this domain maps onto
$$
E_{A\less I, \eps_I} \times \Bigl(V_{IJ}^{|I|,|J|}\cap \rho_{IJ}(V_{HJ}^{|H|,|J|})\Bigr) \subset M_I^{|I|}.
$$
\item[{\rm (d)}]   if $I\subsetneq J$ then $\Sss_I\circ \tau_{IJ} = \Sss_J$ on $\TM_{IJ}^{|I|,|J|}$;
\item[{\rm (e)}]  $\Sss_J^{-1}(0) = \{0\}\times s_J^{-1}(0)\subset M_J^{|J|}$.
\end{itemlist}
In the end we will take 
\begin{align*}% \label{eq:MII}
M_I: = M_I^{|I|}, \quad  M_{IJ}: = M_{IJ}^{|I|,|J|}
\end{align*}
 with the corresponding sets $\TM_{IJ}^{|I|,|J|}$, and  the restrictions of the maps $\tau_{IJ}$ and  $\Sss_I$.  In particular, $\de_I = \eps_I^{|I|}$.

\MS

For simplicity, we first assume that the isotropy groups are trivial. 
As in Example~\ref{ex:MJ} (see in particular \eqref{eq:al1J})
for $I\subsetneq J$ 
  we will define a family of
 injective  maps
$$ 
\al_{IJ}: M_{IJ}^{|I|+1, |J|+1}\cap \{(e,x)\, \big|\; \|e_{J\less I}\|< \eps_I^{|I|}\} \;\; \to \;\; 
M_J^{|J|+1} , \quad  \la\ge 1
% M_J^{|J|+1} \cap \bigl(E_{A\less J}\times \TV_{IJ}^\infty\bigr), \quad  \la\ge 1
$$ %\end{align} 
(where $e_{J\less I}: = \pr_{E_{J\less I}}(e)$) with well defined restrictions
\begin{align}\label{eq:b}
&\al_{IJ}: = \al_{IJ}\big|_{M_{IJ}^{m,k}} : M_{IJ}^{m,k} \to M_J^k, \quad m\le |I|+1, k\le |J|+1, m \le k,
\end{align}
 such that
\begin{align}\label{eq:c}
& \al_{IJ} = \al_{HJ}\circ \al_{IH} \mbox{ on }  M_{IJ}^{|I|,|J|}\cap 
\al_{IH}^{-1}(M_{HJ}^{|H|,|J|}), \quad \forall \; I\subsetneq H \subsetneq J.
\end{align}
Then we define
$$
\TM_{IJ}^{m,\ell} = \al_{IJ} (M_{IJ}^{m,\ell}),\quad  \tau_{IJ} = \al_{IJ}^{-1}.
$$
With this, conditions (b), (c) will hold and $\tau_{IJ} = \al_{IJ}^{-1}$ has the required product form.  We will arrange the rest of (a) later.  
\MS

\NI {\bf Step 2:}  {\it Definition of  $\al_{IJ}$ via  the paths $\Pp(e,x)$.}

\begin{figure}[htbp] %  figure placement: here, top, bottom, or page
   \centering
 \includegraphics[width=2.5in]{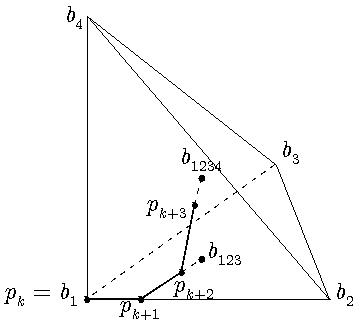} 
   \caption{The path $\Pp(e,x)$ with $I_k = \{1\},\dots, I_{k+3} = \{1,2,3,4\}$. }
   \label{fig:2}
\end{figure}

To define $\al_{IJ}(e,x)$ 
we consider the chain of length $m = m(|x|)$
formed by the sets $H$ such that $|x|\in |V_H|^{|H|+1}$ 
\begin{align}\label{eq:chain2}
I_{\min}(|x|) = I_0(|x|)\subsetneq  I_1(|x|)\subsetneq \cdots \subsetneq  I_{m}(|x|) = I_{\max}(|x|),
\end{align}
modifying the definition of $\ov\st^\De_J(|x|)$ from \eqref{eq:starx} accordingly.
Extending the procedure in \eqref{eq:al1J}, if $I = I_{k}(|x|)$ we  define $\al_{IJ}(e,x)$  by applying  collar maps in
$Y_{\Vv^{\ka+1}, I_m, \ve^{\ka+1}}$ a total of
$m- k$ times with initial points $p_{n-1}\in \pr_\De^{-1}(\De_{I_{n}})$ and collar lengths $r_n$ for $n=k+1,\dots, m=m(|x|)$.
% that   as in \eqref{eq:al1J} depend on cut-off functions  as well as the quantities  $a_n: = \sqrt{\|e_{I_{n+1}\less I_n}\|}$.
In fact, it is useful to think of applying the iterated collar map that lies over the path $\Pp(e,x)$ in  $\ov\st^\De_J(|x|)$
with the following vertices:
 \begin{align*}%\label{eq:p}
&p_k= b_{I_k}, \;\; %p_{m+1} = (1-r_1) b_{I_m} + r_1b_{I_{m+1}} = c_{I_m}^\De(p_m, r_1)., \;\;\cdots\;\; \\ \notag
  p_{n} = (1-r_n) p_{I_{n-1}} + r_nb_{I_{n}} = c_{I_{n}}^\De(p_{n-1}, r_n), \quad k<n\le m(|x|),
\end{align*}
(see Figure~\ref{fig:2}) where the $r_n$ are described below.
Note that by the collar compatibility with covering maps in \eqref{eq:coll4}  
it makes no difference whether at the $n$th step we apply the  collar map over the segment $[p_{n-1}, p_n]$ in $Y_{\Vv,I_n, \ve}$ 
(where $\Vv: = \Vv^\ka$)
 and then lift to  the next level $Y_{\Vv,I_{n+1}, \ve}$,  or whether we first lift all the way to $Y_{\Vv,I_{m}, \ve}$ (where $I_m = I_{\max}$), and then apply the collar maps.
We take the second approach, first   lifting
the initial point $(e_{A\less I_k}, x)$ to 
 \begin{align*}%\label{eq:phicol}
(e_{A\less I_k}+ b_{I_k}^{-1}\cdot s_{I_k}(x),\, \phi_{I_k I_m}(x); b_{I_k})\in 
\p_{I_{m}\less I_{k}} Y_{\Vv, I_{m}, \ve^{\ka+1}} \cap \p'_{I_{m}\less I_{0}} Y_{\Vv, I_{m}, \ve^{\ka+1}}
\end{align*}
%(see \eqref{eq:} and \eqref{eq:} for the definition of $\p'_{I_{m}\less I_{0}} Y_{\Vv, I_{m}, \ve^{\ka+1}}$) 
and then applying successive collar maps that remain in the boundary $\p'\, Y_{\Vv, I_{m}, \ve^{\ka+1}} $ until the very last step.
%The rather complicated definition of  $\p'_{J\less I} Y_{\Vv, J, \ve}$ may be found in \eqref{eq:dKY} and \eqref{eq:colYY}.  
Note that  by the collar compatibility with shrinkings we can work in $\Vv: =\Vv^\ka$ rather than in the different $\Vv^i$.

To complete this definition of $\al_{IJ}(e_{A\less I},x)$ it remains to define the lengths $r_n = r_n(x)$ for $k+1\le n \le m$.
To achieve consistency with coordinate changes, for each $I\in \Ii_\Kk$, we choose a cutoff function $\chi_I:|\Kk|\to [0,1]$ such that
\begin{align}\label{eq:chi}
& \supp(\chi_I)\subset \pi_\Kk(V_I^{|I|+1}),\qquad \chi_I^{-1}(1)\subset \pi_\Kk(V_I^{|I|}),
\end{align}
and for each $J$ denote its pullback to the set $V_J^{|J|+1}$ by the same letter.  Then, writing $a_n: = \sqrt{\|e_{I_n\less I_{n-1}}\|}$
and $\chi_i: = \chi_{I_i}$, we 
define
\begin{align*}%\label{eq:rn}
r_{m+1}(x): & = \chi_{{m+1}}(x) a_{m+1},\;\; r_{m+2}(x) = \chi_{{m+2}}(x) \max\bigl((1-\chi_{{m+1}}(x))a_{m+1}, a_{m+2}  \bigr), \dots
\\ \notag
 r_n(x) : &= \chi_n(x) \Bigl( \max_{m< j\le n} \la_j a_j\Bigr),\quad  \la_j:= \prod_{i=j}^{n-1} (1-\chi_i(x)),\;\;  j<n,\;\;  \la_n: = 1.
\end{align*}
To check that  $\al_{IJ}(e_{A\less I},x)$ is well defined we note the following.
\begin{itemlist}
\item The path $\Pp(e,x)$ depends both on the position of $|x|$  with respect to the sets $|V_H|^{|H|+1}$ in the chain \eqref{eq:chain2}, and on the relative sizes $a_k$ of the relevant components of $e$; cf. equations \eqref{eq:al1J}.
\item In order for the collar maps to be defined over $\Pp(e,x)$, we must have $r_n(x)< w_{I_n}$  for all $n$.  But $$
r_n\le \max_{m<j\le n} \sqrt{\|e_{I_n\less I_{n-1}}\|} < \sqrt{\|e_{A\less I}\|} < \sqrt{\eps_{I_m}}< w_{I_n}
$$
for all $m>n$ because $(\Vv,\ve)$ is collar compatible: see Definition~\ref{def:collcompat}.
\item Further at each stage we need the image of the iterated collar map to lie in the domain of the next collar map
It follows from \eqref{eq:coll19} that the initial point $$
(e_{A\less I_k}+ b_{I_k}^{-1}\cdot s_{I_k}(x),\, \phi_{I_k I_m}(x); b_{I_k})$$
 of $\Pp(e,x)$ does lie in
$\p'\, Y_{\Vv,J,\,\ve}$.  One then uses the fact that these domains  $\p'\, Y_{\Vv,J,\,\ve}$ are compatible with covering maps, as explained in 
\eqref{eq:collH},\eqref{eq:coll4}.
\item
To see that the path $\Pp(e,x)$ varies continuously with $x$, it suffices to check continuity for a sequence of points $x^\nu\to x^\infty$ for which just one of the functions $\chi$ ---  say $\chi_s$ --- changes from a  positive value to zero.
But in this case (assuming that $e$ is fixed) the functions $r_i(x)$ are continuous for $i<s$, while for $i\ge s$ we have
\begin{align*}
& a_i^\nu = a_i^\infty, i\ne s, s+1,  \quad a_s^\infty = \max (a_s^\nu, a_{s+1}^\nu),\\
&\lim_\nu r_{i}(x^\nu)  = r_i(x^\infty), i<s, \quad
\lim_\nu r_s(x^\nu) = 0, \quad \lim_\nu r_{i}(x^\nu)  = r_{i-1}(x^\infty), i>s.
\end{align*}
\item  If $x\in V_{IH}^{|I|+1, |H|}$ for $H = I_s$ where $m<s< n$, then $\chi_s(x) = 1$.  In this case,
we can divide $\Pp(e,x)$ into two independent segments at the point $p_s$, because the lengths   $r_n(x), n>s$ no longer depend on $a_i, i\le s$ since $\la_i = 0$ for $i\le s$.   Further, the second part of $\Pp(e,x)$ projects  to the path $\Pp(\phi_{IH}(x))$ under the natural projection $$
\bigl(\conv(b_{I_0},\dots, b_{I_{\max}(|x|)})\bigr)  \less \bigl(\conv(b_{I_0},\dots, b_{I_s}) \bigr)\;  \to \; \conv(b_{I_s},\dots, b_{I_{\max}}).
$$
\end{itemlist}

\NI {\bf Step 3:}\ {\it Definition of the maps $\al_{IJ}$ and sections $\Sss_I$ in the case of trivial isotropy.}

With these formulas in hand, we now define
 the maps $\al_{IJ}$ and sections $\Sss_I$ by downwards recursion on $|I|$.  
 For $|J|=\ka: = \max\{|J|: J\in \Ii_\Kk\}$, we define 
 \begin{align*}
  \Sss_J:& =  \Sss_J',\quad \mbox{ where }\\
    \Sss_J': & \; M_J\to E_A,\quad  (e_{A-J},x)\mapsto (e_{A\less J}, s_J(x)).
\end{align*}
 If
$|I| = \ka-1$,  for $x\in V_{IJ}^{|I|+1, |J|}$ the path $\Pp(|x|)$ has one segment of length $\chi_{I} a_k: =\chi_{I}  \sqrt{\|e_{J\less I}\|}$,
 and we define $\al_{IJ}:M_{IJ}^{|I|, |J|}\to M_J^{|J|}$ by applying the collar map as in \eqref{eq:al1k123}.
 For these values of $x$ we have $\chi_I(x) = 1$.  However the fact that  we have defined $\al_{IJ}$ over the larger set
 $V_{IJ}^{|I|+1, |J|}$ means that the function
\begin{align}\label{eq:SssH}
 \Sss_I: = \prod_{J: I\subsetneq J} (1-\chi_J) \Sss_I' + \sum_{J: I\subsetneq J} \chi_J  \al_{IJ}^*(\Sss_J): V_{I}^{|I|}\to E_I .
\end{align} 
% 
% \phi_{IJ}^*(\Sss_J):  \bigcup_{J: I\subsetneq J} V_{IJ}^{|I|+1,J}\to E_A
 is well defined and is compatible under pullback from  $V_J$.  

 Let us now suppose that maps $\al_{IJ}: V_{IJ}^{|I|+1,|J|+1}\to  V_{J}^{|J|+1}$, and functions $\Sss_I: V_I^{|I|}\to E_A$ have been defined for all $I\subsetneq J$ with $|I|>k$ so as to satisfy conditions~\eqref{eq:b},\eqref{eq:c}, and consider $I$ with $|I|=k$.  Because there are no transition functions $\al_{II'}$ between these sets $V_I$ we can work separately with each such $I$.  Then define $\al_{IJ}(x)$ for $x\in V_{IJ}^{|I|+1,|J|+1}$ by applying the collar maps $c^Y_{HJ}$  as described in Step 2 over the part, called $\Pp_{IJ}(x)$ below,  of the path 
$ \Pp(e,x)$ from $p_k = b_{I}$ (where $I = I_k(|x|)$) to $p_q$, where $J = I_q(|x|)$.

We check the properties of $\al_{IJ}$ as follows.

\begin{itemlist}\item  The map $\al_{IJ}$ depends continuously on $x$ because we saw above that the path $\Pp(e,x)$ depends continuously on $x$, and because by  \eqref{eq:coll2} the collar map along a path segment of length $0$ is the identity.
\item Both $\TM_{IJ}$ and $\al_{IJ}$ have the product form required by (a) because the collar map $c^Y_J$ does not change the components of $e_{A\less I}$ that lie in  $E_{A\less J}$; cf. \eqref{eq:coll2}.
\item  We repeatedly use the fact that the collar is compatible with all the shrinkings to show  that (b) holds.
\item  To prove the composition formula (c), we use the fact proved above that 
when  $x\in M_{IH}^{|I|+1,|H|}$, the path $\Pp_{IJ}(x) $ divides into two independent segments, the first of which is simply $\Pp_{IH}(x)$, while the second projects onto  $\Pp_{HJ}(\phi_{IJ}(x))$.  Now use the invariance of the collar map under rescaling \eqref{eq:rescal}.
\item To see that $\al_{IJ}$ is injective,
notice first that the path $\Pp_{IJ}(x)$ is determined by $x$.  Hence the collar maps
applied to the lift $(e',\phi_{IJ}(x); b_I)$ of $(e,x)\in M_{II}$ to $Y = Y_{\Vv,J,\ve}$ give a point in $Y$ that lies over a point $t_x\in \De_J$, that is determined by $\Pp(e,x)$ because the collar $c^Y_J$ lifts $c^\De_J$ by \eqref{eq:coll1}.  But 
 the collar maps are injective, as is the 
projection $Y_{\Vv,J,\ve}\cap \pr_\De^{-1}(t_x)$ to $M_J$.
\end{itemlist}
 
 Finally, we define
 $\Sss_I$ as in \eqref{eq:SssH}.  This clearly has the properties required in (iii).
\MS

\NI {\bf Step 4:}\ {\it Completion of the proof in the case of trivial isotropy.}
 The first claims  in (i), namely that   $\tau_{IJ}^0$ extends $\rho_{IJ}$ and that $\TM_{IJ}$ is a product of the form $E_{A\less J, \de_I}\times \TM_{IJ}^0$, are clear.  
 To establish the separation claim, namely that $cl(\TM^0_{IJ})\cap cl(\TM^0_{HJ}) = \emptyset$ unless $I,H$ are nested, notice that   the intersections 
 of $cl(\TM^0_{IJ}), cl(\TM^0_{HJ})$ with $\{0\}\times V_J$ certainly have this property by definition of a reduction.  Hence, starting with maximal $|I|$ as usual, we may, if necessary, shrink the constants $\de_I$  so that this property holds.
 Further, (iv) holds, because if $\bK$ is oriented, then so are all the manifolds $Y_{\Uu, J, \,\ve}$ and $M_I$.  Since the 
 structural maps in $\Kk$ preserve orientation by definition, and the
 collar maps $c^Y_J$   preserve orientation by Proposition~\ref{prop:col}, so do the maps $\tau_{IJ}$ constructed above. 

It remains to arrange that the maps $\al_{IJ}$ have closed graph.  We do this by the method described in Example~\ref{ex:MJ}.
Given $I\subsetneq J$, recall that $M_{IJ}: = M_{IJ}^{|I|,|J|}\subset M_{IJ}^{|I|,|J|+1}$  and define 
%Thus for $I\subsetneq J$, we define
  $\Fr(M_{IJ}): = \cl(M_{IJ})\less M_{IJ}$ where we take the closure in $M_{IJ}^{|I|,|J|+1}$.  Then  the maps $\al_{IJ}$ extend to give a compatible family of embeddings defined over  $\cl(M_{IJ})$.  The images $\al_{IJ}(\cl(M_{IJ})), \al_{HJ}(\cl(M_{HJ}))$ are disjoint unless $I,H$ are nested.  Moreover,
   if $H, I$ are nested, the intersection $\al_{IJ}(M_{IJ})\cap  \al_{HJ}(\Fr(M_{HJ}))$ is empty because $\al_{IJ}(M_{IJ})\subset M_J = M_J^{|J|}$ while 
 $   \al_{HJ}(\Fr(M_{HJ}))\subset \Fr(M_J) \subset M_J^{|J|+1}\less M_J^{|J|}$.  
 Hence, if  we define 
\begin{align}\label{eq:MKprune}
 M_J': = M_J\less {\textstyle \bigcup_{I\subsetneq J} } cl(\al_{IJ}(\Fr(M_{IJ})),
\end{align}
 the maps $\al_{IJ}$  for $I\subsetneq J$ have image in $M_J'$ and closed graph.    Moreover, if we have already arranged that
 all the maps $\al_{JK}: M_{JK}\to M_K$ for $J\subsetneq K$ have closed graph and satisfy the compatibility conditions \eqref{eq:c} for all $I\subset J\subset K$,
then if we replace the domain $M_{JK}$ by $M_J'\cap M_{JK}$ the maps  $\al_{JK}: M_{JK}'\to M_K$ will still have these properties.  
 Hence we may arrange that all the maps $\al_{IJ}$ have closed graph by applying these two steps for each $J$, starting with $J$ such that $|J|$ is maximal and then working down.
 
This completes the proof if the isotropy groups are trivial.
\MS

\NI {\bf Step 5:}\ {\it The case of nontrivial isotropy.}

To construct the maps $\tau_{IJ}$ in general, we argue as above, taking $\phi_{IJ}(x)$ to be the local inverse to the covering map $\rho_{IJ}$ at $x\in \TV_{IJ}$, and then defining $\tau_{IJ}$ to be the $\Ga_A$-equivariant extension of  $\al_{IJ}^{-1}$ to a neighborhood of the orbit of $(0,x)$ in $\TM_{IJ}$.  
To see that this definition is consistent and independent of the choice of $x\in \rho_{IJ}^{-1}(\rho_{IJ}(x_0)$, note that
 the collar map is equivariant and, once the shrinkings $(\Vv^k, \ve^k)$ are chosen,  the only other choice in 
the above construction is that of the cutoff functions $\chi_I$ in \eqref{eq:chi} whose pullbacks to the sets $V_I$ are also equivariant.
Hence the local inverse  $\phi_{IJ}(x_0)$  is invariant under the stabilizer group $\Ga_x$, and so the extension is well defined.

\MS

Since all the previous arguments apply without essential change, it
  remains to check that  $\tau_{IJ}^0$ quotients out by a free action of $\Ga_{J\less I}$
on $\TM_{IJ}^0$ that extends to a neighborhood of $\cl(\TM_{IJ}^0)$.  
 To establish this, we must define an appropriate action of $\Ga_{J\less I}$ on $\TM_{IJ}^0$.  
If $\Ga_{J\less I}$ acts trivially on $E_{J\less I}$, then this action is simply the restriction of the given action of $\Ga_{J\less I}$ on  $V_J$.  However, in general  this is not the case, and the new action $$
\Ga_{J\less I}\times \TM_{IJ}^0\to \TM_{IJ}^0,\quad (\ga,x)\mapsto \ga*x  
$$
is described as follows.  
Notice first that because the collar $c_J^Y$ is $\Ga_A$-equivariant and injective,  each point 
 $x_0\in \TV_{IJ}\subset \TM_{IJ}^0$ %\subset (E_{A\less J}\times  V_J)$ 
 with $\tau_{IJ}^0(x_0) = 
(0, x_0')\in E_{J\less I, \de_I}\times V_{IJ}$
has a neighborhood $\Nn(x_0)$ on which $\tau_{IJ}^0$ is injective and has  image $\Nn'$ of product form, namely $\Nn' = 
 E_{J\less I, \de_I}\times \Oo'\subset  E_{J\less I, \de_I}\times V_{IJ}$.  Further,  $\Ga_{J\less I}$ acts on $\Nn'$ via its action on 
 $ E_{J\less I, \de_I}$, since it fixes the points of $V_{IJ}\subset V_I$.  
If $\tau_{IJ,x_0}^{-1}:\Nn'\to \Nn(x_0)$ is the local inverse to $\tau_{IJ}$ at $x_0$, we now define
\begin{align}\label{eq:ga*}
\ga * x: = \ga\cdot_J \tau_{IJ,x_0}^{-1}(\ga^{-1}\cdot_I  \tau_{IJ}(x)), \quad x\in \Nn(x_0).
\end{align} 
where for clarity we have written $x\mapsto \ga\cdot_I x$ (resp. $x\mapsto \ga\cdot_J x$) for the standard action of $\ga\in \Ga_{J\less I}$ on $E_{J\less I, \de_I}\times V_{IJ}\subset  E_{J\less I, \de_I}\times V_{I}$\; (resp. on $\TM_{IJ}^0\subset V_J$).  
Then
\begin{align*}
\tau_{IJ}(\ga * x)& = \tau_{IJ}\bigl( \ga\cdot_J \tau_{IJ,x_0}^{-1}(\ga^{-1}\cdot_I  \tau_{IJ}(x))\bigr)\\
&= \ga\cdot_I\bigl(\tau_{IJ}\circ \tau_{IJ,x_0}^{-1}(\ga^{-1}\cdot_I  \tau_{IJ}(x)) \; = \; \tau_{IJ}(x),
\end{align*}
where the second equality uses the equivariance of $\tau_{IJ}$ with respect to the actions $\cdot_J, \cdot_I$.  
Now extend this action over the whole orbit by setting $\de*(\ga*x): = (\de\ga) * x$.
This new action $x\mapsto \ga*x$  is free, since $\Ga_{J\less I}$ acts freely on $\TV_{IJ}$.  Further,  
this action extends to a free action on a neighborhood of the closure of $\TM_{IJ}^0$ since it is determined by $\tau_{IJ}$, and hence by the collar, both of which can be extended. 
\end{proof}

\begin{lemma}\label{le:compat}  The  action $x\mapsto \ga*x$ of $\Ga_{J\less I}$ on %(a small neighborhood of)
 $\TM_{IJ}$ has the following properties:
\begin{enumerate}
 \item if  $H,I\subset J$, then the action of $\Ga_{J\less I}$  on $\TM_{IJ}$ preserves the subset $\TM_{IJ}\cap \TM_{HJ}$;
 \item if  $H\subset I\subset J$ then the restriction to  $\TM_{IJ}\cap \TM_{HJ}$ of the action of $\Ga_{J\less I}$  on $\TM_{IJ}$ agrees 
 with that obtained by considering $\Ga_{J\less I}$  as a subgroup of  $\Ga_{J\less H}$ and restricting the corresponding action from  $\TM_{HJ}$ to $\TM_{IJ}\cap \TM_{HJ}$.  
 \item if $H\subset I\subset J$ and $y\in \TM_{IJ}\cap \TM_{HJ}$,   then $$
 \tau_{IJ}(\ga_{J\less H} *y) = (\ga_{J\less H}|_{I\less H})* \tau_{IJ}(y)
 $$
 where $\ga_{J\less H}|_{I\less H}$ is the image of $\ga_{J\less H}$ under the projection $\Ga_{J\less H}\to \Ga_{I\less H}$.
 \item Properties (i) and (ii) continue to hold for the extension of the action to the closure $\cl(\TM_{IJ})$ of
 $\TM_{IJ}$ in $M_J$.
\end{enumerate}
\end{lemma}
\begin{proof}   (i) follows from \eqref{eq:ga*} because the action $x\mapsto \ga\cdot_J x$ preserves the  sets $\TM_{HJ}$ for all $H\subset J$.
(ii) also follows immediately from \eqref{eq:ga*} and the fact that $\tau_{HJ} = \tau_{HI}\circ \tau_{IJ}$ on $\TM_{IJ}\cap \TM_{HJ}$.  (iii) holds because the maps $\tau_{IJ}$ are equivariant with respect to the projection $\Ga_J\to \Ga_I$ and take $\TM_{IJ}\cap \TM_{HJ}$ to $M_{IJ}\cap \TM_{HI}$ by \eqref{eq:tauIJK}.  Finally (iv) holds because the extended action is  defined by extending the domain of the maps in \eqref{eq:ga*}.
\end{proof}

\begin{rmk}\rm \label{rmk:sm2}{\bf (The smooth case.)}  
Note first that if we apply the above construction to a smooth atlas (i.e. one that satisfies the smooth submersions condition in \eqref{eq:prod2}), then the 
charts used in \eqref{eq:phiYI} to give $Y_{\Vv,J,\,\ve}$ the structure of a  topological  manifold do not have differentiable inverses.  
A related problem may  also be seen in Example!\ref{ex:MJ}: 
%not smooth because its defining equation $s_J(x) = t\cdot e$ is not smooth at points $t\in \p \De$.  
%Further
 the attaching map $\al_{1,12}$ in \eqref{eq:al112} is given by the collar, which by \eqref{eq:coll00} has the form $(e_2,x)\mapsto x': = \phi(\|e_2\|^{1/2} e_2, x)$,
where $\phi$ is the local product structure along $\TV_{IJ}$  in \eqref{eq:subm0}.  Thus, even if $\phi$ were
 a diffeomorphism, $\al_{1,12}$ would not have a smooth inverse along the submanifold  $e_2 = 0$.  
Thus, just as in standard blow-up constructions, in order to obtain a smooth category $\bM$ from a smooth atlas one needs to choose a smoothing of $Y_{\Vv,J,\,\ve}$ along its boundary.

Alternatively, one could use a different construction that avoids 
%The construction in the smooth case is similar but somewhat
%easier since there is no need to 
introducing the manifold $Y$.  Instead, one can construct the all important collar structure used to define the maps $\tau_{IJ}$  
 by using the exponential map with respect to a suitable family of metrics on the sets $V_J$.  Indeed, recall that 
by the smooth tangent bundle condition \eqref{eq:prod2} the derivative  $\rd s_{J\less I}$ induces an isomorphism from
the normal bundle $T^\perp(\TV_{IJ})$ of $\TV_{IJ}$ in $V_J$ to the product $E_{J\less I, \eps_I}\times \TV_{IJ}$.  
%For example, in the situation considered in (ii) above, 
To explain the idea, let us suppose for simplicity that that the cover $\Vv$ is refined so that the group $\Ga_{J\less I}$ acts freely on the components on $\TV_{IJ}$, so that the restriction of $\tau_{IJ}$ to each component is  a diffeomorphism onto $V_{IJ}$ .
Then we can think of $V_{IJ}$ as a subset of $\TV_{IJ}$ and the task is to define 
a consistent family of injections $\al_{IJ}:  E_{J\less I, \eps_I}\times V_{IJ}\to V_J$.
To this end, choose a family of $\Ga_I$-invariant Riemannian metrics  $g_I$ on $V_I$ and constants $\eps_I$ that are compatible in the following sense:
\begin{itemlist} \item for each $I\subsetneq J$, $\TV_{IJ}$ is a totally geodesic submanifold of $(V_J, g_J)$ and 
$$
(\rho_{IJ})_*(g_J|_{\TV_{IJ}}) = g_I|_{V_{IJ}}.
$$
%$
%g_J|_{\TV_{IJ}} = (\phi_{IJ})_* (g_I|_{V_{IJ}})$;
\item $0<\eps_I<\eps_J$ if $I\subsetneq J$;
\item  for each $I\subsetneq J$, the $g_J$-exponential map  along directions perpendicular to
$\TV_{IJ}$  defines an embedding $\al_{IJ}: E_{J\less I, \eps_I}\times V_{IJ}\to V_J$;
\item the corners are locally flat, i.e. if $x\in \TV_{IJ}\cap \TV_{HJ}$ for $I\subsetneq H\subsetneq J$ then
$$
\al_{IJ}\bigl(e_{J\less H}+ e_{H\less I}, \ x\bigr) = \al_{HJ}\bigl(e_{J\less H}, \ \al_{IH}(e_{H\less I}, \ x)\bigr).
$$
\end{itemlist}
The last condition means that the composition rule holds directly, without having to introduce analogs of the paths $\Pp(e,x)$.
Of course, the choice of the $g_I,\eps_I$ requires some attention to detail as in
the proof of Lemma~\ref{le:Veps} 
 below;  see also  the construction of the perturbation section in \cite[\S7.3]{MW2}.  Thus one begins with a family of shrinkings $\Vv^\ka\sqsubset \cdots \sqsubset \Vv^1 \sqsubset \Vv^0$ of an initial reduction $\Vv^0$, where $\ka : = \max \{|J| \ | \ J\in \Ii_\Kk\}$ and then chooses metrics $g_J$ on $V_J^{|J|}$,
starting with $J$ of length $|J| = 1$, that satisfy the above conditions for the submanifolds
$\TV_{IJ}^{|J|}$ of $V_J^{|J|}$   for some constant $\eps_I'>0$.  Finally, once $g_J$ is defined on $V_J^\ka$ for all $J$, one chooses suitable constants $\eps_J$, now starting with maximal $|J|$ and working down.    Further details are left to the reader, as is the proof that   the resulting branched manifold is cobordant to the topological one constructed in detail above.  For this last step one would need to adapt the proof of uniqueness in Step 2 of the proof of Theorem~B in \S\ref{ss:topol}.

Besides  obtaining  a smooth rather than topological branched manifold, there are no real advantages to this construction unless one wants to work with the virtual fundamental class on the chain level using de Rham cochains.   Another point is that
 by \cite{Mbr} we can construct  the branched manifold $M$ to be a simplicial complex, so that we could simplify the proof of Lemma~\ref{le:fundM} by using (locally finite) singular homology instead of \c{C}ech homology. However, because we know nothing about $X$ except that it is compact and Hausdorff, the VFC has still to be considered as an element in \c{C}ech homology.  For further discussions of the smooth case, see Step 3 of the proof of Theorem~B.
 \hfill$\er$
\end{rmk}

\subsection{Proof of Theorems~A and~B}\label{ss:topol}

To prove Theorem~A, we must show that  the category $\bM$ constructed in \S\ref{ss:M}   
has a unique completion to a weighted branched groupoid, and then analyze the structure of this groupoid and the associated 
weighted branched manifold $(M,\La)$.  
The arguments needed here are very similar, but not identical, to those in  \cite[Proposition~2.3] {Morb} (which considers 
the case of the category $\bB_\Kk$ defined by an atlas with trivial obstruction spaces)  
and in \cite[\S3.3]{MWiso} (which analyzes the zero set of a transverse perturbation section).
Theorem~B has two parts. It first states  that if $\Kk$ is oriented the weighted branched manifold $(M,\La)$ carries a natural fundamental class $[X]^{vir}_\Kk$, a result that
 was proven in \cite{Mbr}  in the case when $\bM$ is smooth and  compact, with or without boundary.  
Although smoothness is assumed throughout \cite{Mbr}, the only place where this condition is essential is in the construction of the fundamental class in the proof of \cite[Proposition~3.25]{Mbr}. In this case, we may  replace $\bM$ by an equivalent wnb groupoid that is  tame in the sense that its branching loci are piecewise smooth and hence triangulable, which allows us to work with singular homology in the proof of Lemma~\ref{le:fundM}.
  In the present case,
we must use rational \c{C}ech cohomology, and the appropriate dual homology theory for noncompact manifolds as described in \S\ref{ss:app}.
The second and  more substantial part of  the proof of Theorem~B explains why  $[X]^{vir}_\Kk$ is independent of all choices made in its construction, and why, in the smooth case,  the new definition is consistent with the previous definition via perturbation sections.  
\MS

We begin with a lemma about groupoid completions of \'etale categories; for definitions, see \S\ref{ss:defs}, \S\ref{ss:br}.   As usual we denote by $\Ii$ a collection of subsets of a finite set $A$, and say that $I,H\in \Ii$ are nested if $I\subset H$ or $H\subset I$.  
We state the condition {\it identity} below for completeness: it follows immediately from the fact that every category has identity morphisms.

\begin{lemma}\label{le:complet}  Let $\Ii$ be a collection of subsets of a finite set $A$, and  $\bM$ be an \'etale category with
\begin{align*}%\label{eq:Mm1}
\Obj_{\bM} &= {\textstyle  \bigsqcup_{I\in \Ii} } M_J, \qquad 
\Mor_{\bM} ={\textstyle \bigsqcup_{I\subset J,\, I,J \in \Ii} \TM_{IJ} } \\ \notag
s\times t: &\;\;\Mor_{\bM}\to \Obj_{\bM}\times \Obj_{\bM},\quad (I,J,y)\mapsto \Bigl(\bigl(I,\tau_{IJ}(y)\bigr), \bigl(J,y\bigr)\Bigr),
\end{align*}
where  $\TM_{IJ}\subset M_J$ is an open 
subset  %whose closure $cl(\TM_{IJ})$ is disjoint from $cl(\TM_{HJ})$ unless $I,H$ are nested,
and  the maps  
$\tau_{IJ}: \TM_{IJ} \to M_{IJ}$ satisfy the following conditions:
\begin{itemlist} \item {\bf (identity)} for all $I\in \Ii$,  $\tau_{II} = \id$ on  $\TM_{II} = M_{II} = M_I$;
\item {\bf (composition)} for all $H\subset I\subset J$,   $\tau_{IJ}^{-1}(\TM_{HI}\cap M_{IJ}) = \TM_{HJ}\cap \TM_{IJ}$ and 
$\tau_{HJ} = \tau_{HI}\circ \tau_{IJ}$  on $ \TM_{HJ}\cap \TM_{IJ}$; hence, if  $z\in \TM_{IJ}\cap \TM_{HJ}$ where $H\subset I\subset J$
we have $(H,I,y)\circ (I,J,z) = (H,J,z).$
\item {\bf (separation)}  $\cl(\TM_{IJ})\cap \cl(\TM_{HJ} ) = \emptyset$ unless $I,H$ are nested;
\item {\bf (group actions)}  for each $i\in A$ there is a finite group $\Ga_i$ such that,
for all $I\subset J$, $\tau_{IJ}$ quotients out by the restriction to $\TM_{IJ}$ of a free action of $\Ga_{J\less I}$ on $cl(\TM_{IJ})\subset M_J$, where 
$\Ga_{J\less I}: =  \prod_{i\in J\less I} \Ga_i$.  Moreover, these actions $x\mapsto \ga*x$ satisfy the compatibility conditions listed in Lemma~\ref{le:compat}.
\end{itemlist}
Then, there is a unique nonsingular groupoid $\Hat\bM$ with the same object space and realization as $\bM$.
 Its morphism spaces for $I\subset J$ are 
\begin{align}\label{eq:MorHM}
\Mor_{\Hat\bM}(M_I,M_J) &:=
 {\textstyle \bigcup_{\emptyset\ne F\subset I} } \; \bigl(\TM_{IJ}\cap \TM_{FJ}\bigr)\times \Ga_{I\less F},\\ \notag
 &= \bigl\{ (y,\ga)\in M_J\times \Ga_J\ \big| \ y\in \TM_{IJ}, \ga\in \Ga_{I\less H_{y}}\bigr\},
 %\quad\mbox{where} \;\; F_x: = \min \{F: x\in \TM_{FJ}\}\\
\end{align}
where $H_y: = \min \{H: y\in \TM_{HJ}\}$, with 
\begin{align}\label{eq:MorHM1}
 s\times t\,(I,J,y,\ga_{I\less H_y})& = \bigl(\,(I,\, \ga_{I\less H_y}^{-1} * \tau_{IJ}(y)), \; (J,y)\bigr)\\ \notag
 (I,I,y,\ga_{I\less H_y})^{-1} &= (I,I,\,\ga_{I\less H_y}^{-1}*y,\,\ga_{I\less H_y}^{-1})
\end{align}
In particular, $\Hat\bM$ is \'etale, and  there is an injective functor $\bM\to \Hat\bM$.
\end{lemma}
\begin{proof}  
%Since the  situation considered here  is very similar (though  not identical)  to that in \cite[Lemmas~3.2.9]{MWiso}, we will  only sketch the main points.  
Observe first that because  a nonsingular category has at most one morphism between any two objects,  $\Hat\bM$ must have precisely one morphism between any two objects $(I,x), (J,y)$ that are equivalent under the equivalence relation $\sim_\bM$ on $\Obj_{\bM}$ generated by $\Mor_{\bM}$.  Hence, 
there is precisely one nonsingular groupoid with $\Obj_{\Hat\bM} = \Obj_{\bM}$ and $|\Hat\bM| = |\bM|$.   Since there is an injection
$$
\Mor_\bM(M_I,M_J) = \TM_{IJ}\; \hookrightarrow\;
 {\textstyle \bigcup_{\emptyset\ne F\subset I} } \; \bigl(\TM_{IJ}\cap \TM_{FJ}\bigr)\times \Ga_{I\less F}
 $$
  when $I\subset J$, and the structural maps described in \eqref{eq:MorHM1} are \'etale,  it remains to check that the
 formula \ref{eq:MorHM} does describe $\Mor_{\Hat\bM}(M_I,M_J)$.

The separation property implies that for each $y\in M_J$
 the set of $F$ such that $|y|\in |\TM_{FJ}|\subset |\bM|$ is nested. Let $H_y$ be the minimal such element.  By Lemma~\ref{le:compat},  for all $H_y\subset I\subset J$ the group $\Ga_{I\less H_y}$ acts freely on $\TM_{H_y I}$.  Hence each element in 
 $\bigcup_{\emptyset\ne F\subset I}  \; \bigl(\TM_{IJ}\cap \TM_{FJ}\bigr)\times \Ga_{I\less F}$
has a unique description of the form $(y,\ga)$ where $y\in \TM_{IJ}, \ga\in \Ga_{I\less H_y}$.  Further, given such $(y,\ga)$ 
it follows from Lemma~\ref{le:compat}~(iii) that $\tau_{H_y J}(y) = \tau_{H_y I} (\ga^{-1}* \tau_{IJ}(y))$ for any $I$ with $y\in \TM_{IJ}$, so that
$$
\bigl(J,y\bigr)\sim_\bM \bigl(H_y, \tau_{H_y J}(y)\bigr) = \bigl(H_y, \tau_{H_y J}(\ga^{-1}* \tau_{IJ}(y))\bigr) \sim_\bM \bigl(I,\tau_{I J}(y)\bigr).
$$
Hence for each $I$ with  $y\in \TM_{IJ}$ and each $\ga\in \Ga_{I\less H_y}$ there must be a morphism $m$ in $\Hat\bM$  from $(I,\ga^{-1}* \tau_{IJ}(y))$ to $(J,y)$.
 To see that these are the only morphisms in $\Hat\bM$ it remains to observe that each equivalence class $[(J,y)]$ contains a unique element of the form $(H_y,z)$ where $z\in M_{H_y}$; further if $H_y\subset I\subset J$ then $[(J,y)]\cap M_I = \{x\in \rho_{H_yI}^{-1}(z)\} $ consists of the $\Ga_{I\less H_y}$-orbit of $\tau_{IJ}(y)$. Since each morphism is uniquely specified by its source and target, there is no need to write out the composition rule in $\Hat\bM$ explicitly.\end{proof}

\begin{lemma}\label{le:Hcomplet}   Suppose in the situation of Lemma~\ref{le:complet} that for each $I\subset J$ the map $\tau_{IJ}$ has closed graph.
Then \begin{itemlist} \item[{\rm (i)}] the  maximal Hausdorff quotient $|\bM|_\Hh$ %is given by closing the equivalence relation on $\Obj_{\bM}$, and hence is
is  the realization of a nonsingular groupoid $\Hat\bM_\Hh$ with objects $\Obj_{\bM}$ and morphisms from $M_I$ to $M_J$ with $I\subset J$ given by
$$
\Mor_{\Hat\bM_\Hh}(M_I,M_J) :=
{\textstyle \bigcup_{\emptyset\ne F\subset I} } \; \bigl(\TM_{IJ}\cap cl(\TM_{FJ})\bigr)\times \Ga_{I\less F}.
$$
\item[{\rm (ii)}] For each $I$, the map $\pi_I^\Hh: M_I\to |\Hat\bM_\Hh|$ is a local homeomorphism with open image, and in particular is a proper map onto its image.
 \item[{\rm (iii)}]  The space $M: = |\bM|_\Hh = |\Hat\bM_\Hh|$ can be given the structure of a  weighted nonsingular branched  manifold
with weighting function $\La_M:M \to \Q^+ = \Q\cap (0,\infty)$  given for $p\in |M_I|_\Hh$ by 
$$
\La_M(p) := \tfrac 1{|\Ga_I|}\,  \# \bigl\{ y\in M_I \,\big|\, \pi_{\bM}^\Hh(y) =p \bigr\} = \tfrac {|\Ga_{I\less F_y}|}{|\Ga_I|}
$$
where $F_y: = \min \{F: y\in \cl(\TM_{FI})\} = \min\{F:  \pi_{\bM}^\Hh(y)\in \cl(\pi_{\bM^\Hh}(M_F)\}$.
Moreover the wnb manifold $M$ is oriented if $\bM$ is.
\end{itemlist}
\end{lemma}
\begin{proof}
Denote by $\approx_{\bM}$  the equivalence relation on $\Obj_{\bM}$  corresponding to the 	quotient map $\Obj_{\bM} \to |\bM|_\Hh$.   Its graph is the closure in $\Obj_{\bM}\times \Obj_{\bM}$  of  the graph of $\sim_{\bM}$.  
First consider the component in $\Mor_{\Hat\bM}$ consisting of morphisms from $M_I$ to $M_J$ for $I\subset J$ with  $\ga = \id$.  This set can be identified with $\TM_{IJ}$ and has closed graph by hypothesis.  Next consider the set of morphisms in $\Hat \bM$ from $M_J$ to $M_J$:
\begin{align*}
\Mor_{\Hat\bM}(M_J,M_J)& :=
 {\textstyle \bigcup_{\emptyset\ne F\subset J} } \; \bigl(\TM_{FJ}\bigr)\times \Ga_{J\less F},\\
 s\times t\,(J,J,y,\ga)& = \bigl(\,(J, \ga^{-1}*  y), \; (J,y)\bigr).
\end{align*}
These morphisms form a group  with closure
\begin{align*}
\Mor_{\Hat\bM_\Hh}(M_J,M_J) &:=
 {\textstyle \bigcup_{\emptyset\ne F\subset J} } \; \bigl(M_J\cap cl(\TM_{FJ})\bigr)\times \Ga_{J\less F},\\
 & = \bigl\{(y, \ga)\ \big| \ y\in M_J, \ \ga \in \Ga_{J\less F_y}\bigr\}\quad\mbox{ where }\\
s\times t\,(J,J,y,\ga)& = \bigl(\,(J, \ga^{-1}*  y), \; (J,y)\bigr).
\end{align*}
Next observe that 
 every morphism $(I,J, y,\ga)\in \Mor_{\Hat\bM}(M_I,M_J)$, where $I\subset J$, may be written as the composite 
$$
(J,J,y,\ga)\circ (I,J, \ga^{-1}*y, \id)
$$
 of a morphism of the second type followed by one of the first type. 
 Therefore, because 
 the action of $\Ga_{I\less F}$ on $\TM_{FI}\cap \TM_{IJ}\subset M_J$ (where $F\subset I\subset J$ ) extends to an action
on $cl(\TM_{FJ})\cap\TM_{IJ}$ by Lemma~\ref{le:compat}~(iii), 
the limit of a convergent sequence of morphisms also is such a composite.
Claim (i) then follows easily. 

Since $|\Hat\bM_\Hh|$ has the quotient topology, to establish (ii) it suffices to show that  
$(\pi_J^\Hh)^{-1} (\pi_I^\Hh(M_I))$ is open in $M_J$ for all $J$.   
This set  is empty unless $I\subset J$ or $J\subset I$.  In the former case, %$U\cap M_{IJ}$ is open in $M_I$ and 
$(\pi_J^\Hh)^{-1} \pi_I^\Hh(M_I) = \tau_{IJ}^{-1}(M_{IJ}) = \TM_{IJ}$, which is open. 
In the latter case, %$U\cap \TM_{JI}$ is open in $M_I$ and 
$(\pi_J^\Hh)^{-1} \pi_I^\Hh(M_I) = \tau_{IJ}(\TM_{IJ})$, which is also open.    This proves (ii).

To prove (iii), note that by (ii) we may define the local branches at $p=\pi_I^\Hh(y) \in  |M_I|_\Hh$ to be the image under $\Ga_{I\less F_p}$ of an open neighborhood $U\subset M_I$ of $y$ that is  disjoint from  $\cl(\TM_{FI})$ unless $F_y\subset F$ and is also 
disjoint from its images under $\Ga_{I\less F_y}$. 
Each such local branch is given weight $\frac 1{|\Ga_I|}$.  
It  then follows easily from Lemma~\ref{le:compat}  that $\La_M$ is well defined and has the required properties. For more details, see \cite[Lemma~3.2.10]{MWiso}.  Finally, the statement about orientations is clear.
\end{proof}

\begin{rmk}\label{rmk:Hcomplet}\rm  As in \cite[Lemma~3.2.10]{MWiso}, it follows from part (ii) of Lemma \ref{le:Hcomplet} that the topology on $|\Hat\bM_\Hh|$ is second countable, locally compact,
and metrizable.
\end{rmk}  
\MS

With these preliminaries in hand, it is easy to show that in the oriented case $M = |\Hat\bM_\Hh|$ has a fundamental class.

\begin{lemma}\label{le:fundM}  Let $\bM$ be oriented with corresponding oriented wnb groupoid $\Hat\bM_\Hh$ constructed as in Lemma~\ref{le:Hcomplet}, and let $M: = |\Hat\bM_\Hh|$.  Then
there is a class $\mu_M\in  \check{H}^\infty_N(M)$ with the following property:  if $U : = \pi_I^\Hh(M_I)$ for some $I\in \Ii_\Kk$, then
\begin{align}\label{eq:fundc}
\rho_{M,U}(\mu_M) = \tfrac 1{|\Ga_I|}\ (\pi_I^\Hh)_* (\mu_I) \; \in \;  \check{H}^\infty_N(U),
\end{align}
where $\mu_I\in  \check{H}^\infty_N(M_I)$ is  the fundamental class  in \eqref{eq:ffcl} and $\rho_{M,U}$ is the restriction map on homology in \eqref{eq:rhoYU}.
\end{lemma}
\begin{proof}
It follows from Lemma~\ref{le:Hcomplet} that the statement of the lemma makes sense: the class $\mu_I$ exists by property (a$'$) in \S\ref{ss:app}, 
the restriction exists by (b$'$) because $U$ is open, and   the pushforward exists by (c$'$) because the map $\pi_I^\Hh:M_I\to |M|$ is proper.  
We prove the lemma by showing that for $k = 1,2,\dots,$ there is a class $\mu_k$ on $W_k: = 
\bigcup_{I: |I|\le k}\pi_I^\Hh(M_I)$ such that
$$
\qquad \qquad 
\mu_k|_{\pi_I^\Hh(M_I)} \; = \; (\pi_I^\Hh)_*(\tfrac 1{|\Ga_I|} \mu_I),  \qquad \quad \forall I, |I|\le k.
$$
When $k=1$, $W_1$ is a disjoint union of sets $\pi_I^\Hh(M_I)$, where $|I|=1$, and we simply define  $\mu_1$ to be the given pushforward.    Let us suppose that $\mu_k$ is constructed, and consider the definition of $\mu_{k+1}$.  Since the sets 
$\bigl(\pi_J^\Hh(M_J)\bigr)_{|J|=k+1}$  are disjoint, it follows from (e$'$) that we can consider each of them separately.
Further, by applying  Mayer--Vietoris  with $U = W_k, V=\pi_I^\Hh(M_J)$ it suffices to show that
the classes $\mu_k\in \check{H}^\infty_N(W_k)$ and $(\pi_J^\Hh)_*(\frac 1{|\Ga_J|} \mu_J)\in \check{H}^\infty_N(V)$ have the same restriction to $W_k\cap V = \cup_{I\subsetneq J}\pi_I^\Hh(M_{IJ})$.  
But because  restriction commutes with pushforward by (d$'$), it suffices to prove the corresponding statement for the fundamental classes of the spaces $M_J$.   Namely, we must check that 
$$
\tfrac 1{|\Ga_J|}(\tau_{IJ})_*(\mu_J|_{\TM_{IJ}}) = \tfrac 1{|\Ga_I|}\ (\mu_I|_{M_{IJ}}).
$$
But on manifolds the homology theory $\check{H}^\infty$ agrees with the usual locally compact singular homology.  Hence the above property holds because, by hypothesis,  
the maps $\tau_{IJ}: \TM_{IJ}\to M_{IJ}$ are orientation preserving covering maps of degree $|\Ga_J|/|\Ga_I|$.
\end{proof}  

We are now in a position to prove the main theorems.  We begin with the proof of Theorem~\ref{thm:M}, which as already noted in  \S\ref{ss:br}
immediately implies Theorem~A.
\MS

\NI \begin{proof}[Proof of Theorem~\ref{thm:M}]\;  Given the oriented atlas $\Kk$ we construct the category $\bM$ as in Proposition~\ref{prop:M1}.  We saw in  Lemma~\ref{le:Hcomplet}~(i) that this category has a unique  Hausdorff groupoid completion $\Hat\bM_\Hh$, which proves (i).  Part (ii) follows immediately from Lemma~\ref{le:Hcomplet}~(iii).  Further, the action of the group $\Ga_A$  on $\bM$ and $E_A$ induces an action on $\Hat\bM_\Hh$ and $\bE_A$, and the functor $\Sss: \bM\to \bE_A$ extends to a $\Ga_A$-equivariant functor 
$\Hat\bM_\Hh\to \bE_A$.  Therefore it remains to check that the induced map $\Sss_M: M\to E_A$ on the realizations has compact zero set $\Sss_M^{-1}(0)$ and that there is a map $\psi:\Sss_M^{-1}(0)\to X$ that induces a homeomorphism $\Sss_M^{-1}(0)/\Ga_A\stackrel{\cong}{\to} X$.

Since $M_I\cap \Sss_I^{-1}(0) = \{0\}\times (V_I\cap s_I^{-1}(0))$ by \eqref{eq:sJ}, 
the full subcategory 
 of $\bM$ with objects $\bigsqcup_I M_I\cap \Sss_I^{-1}(0)$ includes into the full subcategory of $\bB_\Kk$ with objects 
  $\bigsqcup_I V_I\cap s_I^{-1}(0)$.    Hence there is an induced map on the realizations
  $$
  |\bM|\cap  \Sss^{-1}(0) \to |\Kk| \cap |\s|^{-1}(0)\cong X.
  $$  
  This is continuous and surjective, but not injective because we have not yet quotiented out by the group actions.  Nevertheless, because $X$ is Hausdorff, the universal property  satisfied by  the Hausdorff quotient implies that it factors through a map  $\psi: |\Hat{\bM}_\Hh|\cap  \Sss^{-1}(0)= \Sss_M^{-1}(0) \to X$.     
Further, because $\psi_I$ induces a homeomorphism $V_I\cap s_I^{-1}(0)/\Ga_I \to G_I\subset  X$ (where $G_I$ is the footprint of the reduced chart domain $V_I$) and $\Ga_{A\less I}$ acts trivially on $\Sss_I^{-1}(0)=V_I\cap s_I^{-1}(0)$, this map $\psi: \Sss_M^{-1}(0)\to X$ does factor through a bijective and continuous map $\Sss_M^{-1}(0)/\Ga_A\to X$.  To see that it is a homeomorphism, it suffices to check that
$\Sss_M^{-1}(0)$ is compact.

To this end, notice first that because
 the topology on $M$  is metrizable by Remark~\ref{rmk:Hcomplet}, we need only check that $\Sss_M^{-1}(0)$ is sequentially compact.  Thus, consider a sequence of points $p_k\in \Sss_M^{-1}(0)$.  Because $M$ is the union of the finite number of sets $|M_I|_\Hh$ we may suppose   that  that $p_k\in |M_I|_\Hh$ 
for all $k$. Choose a sequence $y_k\in M_I\cap \Sss_I^{-1}(0)$ such that $\pi_I^\Hh(y_k) = p_k$.  Then 
$y_k = (0,z_k)\in E_{A\less I,\eps_I}\times (V_I\cap s_I^{-1}(0))$, and $\psi(y_k) = 
\psi_I(z_k)\in X$.   By passing to a subsequence, we may suppose that the sequence $\psi_I(z_k)$ converges to $x_\infty\in X$.  Since the footprints 
$G_J: = \psi_J(s_J^{-1}(0))$ of the reduced charts form an open covering of  $X$,  we may further suppose that there is $J$ such that $\psi_I(z_k)\in G_J$ for all $k$ and that
this sequence has limit $x_\infty = \psi_J(z_\infty)\in G_J$.   
Because 
 $G_I\cap G_J \ne \emptyset$, the sets $I,J$ are nested, and the original sequence $p_k\in |M_I|_\Hh$ must lie in the intersection
 $p_k\in |M_I|_\Hh \cap |M_J|_\Hh$.
 Therefore the $p_k$ also have lifts $y_k'=(0,z_k')\in E_{A\less J,\eps_J}\times (V_J\cap s_J^{-1}(0)) $, and now it follows from the fact that the map
 $V_J\cap s_J^{-1}(0)\to G_J\subset X$ is finite to one that some subsequence of the $z_k'$ must converge to a some point  $z_\infty'$ in the finite set 
 $\bigl(V_J\cap s_J^{-1}(0)\bigr) \cap \psi_J^{-1}(x_\infty)$.    Hence $(p_k)$ has a subsequence that converges to $\pi^\Hh_J(0, z_\infty')\in |M_J|_\Hh\subset M$.  This completes the proof of Theorem~A.
\end{proof}
\MS

The proof of Theorem~B is somewhat longer, and hence we restate it for the convenience of the reader.  Here we assume that $\Kk$ and $X$ are as in Theorem~A. 
\MS

\NI {\bf Theorem B:} {\it If  $\Kk$ is oriented, there is a unique element $[X]^{vir}_\Kk\in \check{H}_d(X;\Q)$ that is defined as follows.  
 For $b \in \check{H}^d(X;\Q)$ and $D = d+\dim E_A$, we have
\begin{align*} 
&\langle [X]^{vir}_\Kk,\, b \rangle : = (\Sss_M)_*( \Hat{b})\in \check{H}^c_{\dim E_A}(E_A, E_A\less \{0\};\Q) \cong \Q,
\end{align*}
where $\Hat{b}$ is the image of $b$ under the composite 
\begin{align*} %\label{eq:capDD}
 \check{H}^d(X;\Q)\stackrel{\psi^*}\longrightarrow  \check{H}^d(\Sss_M^{-1}(0);\Q)\stackrel{\Dd}\longrightarrow \check{H}^c_{\dim E_A} (M, M\less \Sss_M^{-1}(0); \Q),
\end{align*}
and $\Dd$ is given by cap product with the fundamental class $\mu_M\in H^{d+\dim E_A}(M)$ constructed in Lemma~\ref{le:fundM}.  Moreover, $[X]^{vir}_\Kk$ depends only on the oriented  concordance class of $\Kk$, and in the smooth case agrees with the class defined in \cite{MWiso}.}
\MS

\begin{proof} {\bf Step 1:}  {\it Definition of $[X]^{vir}_\Kk$.}

Since the fundamental class $\mu_M$ exists by Lemma~\ref{le:fundM}, and an appropriate  cap product exists by point (f$'$) in the appendix, in order to see that $\langle  [X]^{vir}_\Kk,\, b \rangle$ is well defined it remains to note that the map
$$
(\Sss_M)_*: \check{H}^c_{\dim E_A} (M, M\less \Sss_M^{-1}(0); \Q) \to \check{H}^c_{\dim E_A}(E_A, E_A\less \{0\};\Q) \;\cong\; \Q
$$
is well defined. Further, it takes values in $\Q$, because $E_A$ is oriented by the definition in Remark~\ref{rmk:submer}~(iii) and the theory  $\check{H}_*^c$ coincides with singular homology theory on  simplicial spaces.
\hfill$\er$
\MS

\NI {\bf Step 2:}  {\it Proof of uniqueness.}

To prove the uniqueness of $ [X]^{vir}_\Kk$, one must state and prove the analog of Proposition~\ref{prop:M} for cobordism atlases, and also prove that all choices made in the construction  are unique modulo oriented cobordism. 
For the constructions that involve atlases, such results are proved in~\cite{MW1,MW2,MWiso}: see \cite[Prop.~4.2.3]{MW1} for 
different choices of metrics and  \cite[Thm.~4.2.7]{MW1} for different choices of tame shrinkings;  \cite[Theorem~5.1.6]{MW1} for a discussion of reductions, \cite[\S8]{MW2} for orientations (in particular \cite[Thm.~8.1.12]{MW2}) and
\cite[Appendix]{MWiso} for weighted branched cobordisms.   The present construction also requires a choice of local product structures (as in 
\eqref{eq:subm0}) and partition of unity (as in \eqref{eq:rhokk}) in order to define the collar of the manifolds $Y_{\Vv,J,\,\ve}$.
However, in distinction to the smooth case, it is not necessary to arrange that
cobordism atlases  have specified collars (i.e. local product structures) near the two boundary components because the VFC $[X]^{vir}_\Kk$ is now defined via diagram \eqref{eq:commcap} which involves restriction to the boundary rather than via a perturbation section that must be extended from the boundary to the interior.
  
Thus we define a  cobordism atlas $\Kk^{01}$ over $[0,1]\times X$ between two $d$-dimensional atlases $\Kk^0, \Kk^1$ on $X$ to be an atlas 
  $\Kk^{01}$ over $[0,1]\times X$  of dimension $d+1$ such that 
    \begin{itemize}\item[(i)]
  the charts whose footprints intersect $\p([0,1]\times X) = \sqcup_\al \al\times X$ are manifolds with boundary;
  \item [(ii)]
 for $\al = 0,1$ there are 
  functorial inclusions  
   $$
  \io_\al:  \Kk^\al \to \Kk^{01}, \quad    \io_\al^\Ii: \Ii_{\Kk^\al}\to \Ii_{\Kk^{01}}, \qquad \al = 0,1
   $$
that (for simplicity) we assume to have disjoint images, and  for each $I\in \Ii_{\Kk^\al}$ take the chart domain $U_I^\al$ onto the boundary $\p U^{01}_{I'}$ of the corresponding chart in ${\Kk^{01}}$, where $I': =\io_\al^\Ii(I)$, preserving orientation for $\al = 1$ and reversing it for $\al=0$;
\item [(iii)] we further require that the local product structures in  \eqref{eq:subm0} for the chart domains  in $\Kk^\al$ extend to local product structures near the 
boundary points of the corresponding chart domains in $\Kk^{01}$; 
\end{itemize}
We show in \cite[Thm.~7.1.5]{MW2} that any pair of reductions $\Vv^\al$ of $\Kk^\al$ may be extended to a reduction $\Vv^{01}$ of $\Kk^{01}$  
such that there are natural inclusions
$ \io_\al^V: |\Vv^\al|\to |\Vv^{01}|$ 
    that are homeomorphisms to their image.  Further, if
   $J\in  \Ii_{\Kk^{\al}}$  for $\al = 0,1$,
   then for suitable small $\ve^\al>0$   there is a commutative diagram
   \[
   \xymatrix
{
 E_{A^{01}\less A^\al,\,\ve^\al}\times  Y_{\Vv^\al,J, \,\ve}  \ar@{->}[d]_{ \pr_V}\ar@{->}[r]^{ \qquad \io_\al^Y} &  Y_{\Vv^{01},\io^\al(J), \,\ve} \ar@{->}[d]_{\pr_V}
\\
 V^\al_J \ar@{->}[r]^{\io_\al^V}& V^{01}_{\io^\al(J)}.
}
\]
Notice here that we take the product of $ Y_{\Vv^\al,J, \,\ve} $ with the extra obstruction spaces 
    $E_{A^{01}\less A^\al, \,\ve^\al}$ in order to increase its  dimension  to that of $  Y_{\Vv^{01},\io^\al(J), \,\ve}$.
  Because the maps \eqref{eq:subm0} in the submersion axiom for $\Vv^{01}$ extend those for $\Vv^{\al}$, we can choose the covering and partition of unity in   Step 2 of the proof of Lemma~\ref{le:col}  for $\Vv^{01}$ to extend those already chosen for $\Vv^\al$.  Therefore,
    we can construct the collars on $Y_{\Vv^{01},\io^\al(J), \,\ve}$  to extend already constructed collars on  the sets $Y_{\Vv^{\al}, J, \,\ve}$.
Hence, after possibly shrinking $ \ve>0$,
  we can arrange that for small $\ve^\al>0$ there are embeddings
\begin{align*}%\label{eq:EM}
\io_\al^M:  E_{A^{01}\less A^\al, \,\ve^\al}\times  M^\al  \to M^{01},\quad s.t. \;\;  \sqcup_\al \im(\io_\al^M) = \p M^{01};
\end{align*}
 and also that the map $
\Sss_M^{01}: M^{01}\to E_A$  satisfies 
\begin{align}\label{eq:EM1}
 \Sss_M^{01}\circ \io^M_\al = \Sss_M^\al\circ \pr_M^\al:\;\;  E_{A^{01}\less A^\al, \,\ve^\al} \times M^\al\to E_A,
\end{align}
where $\pr_M^\al:   E_{A^{01}\less A^\al, \,\ve^\al} \times M^\al\to M^\al$ is the projection.

Because $M^{01}$ is constructed from an atlas for the product 
$[0,1]\times X$,  the natural projection $(\Sss_M^{01})^{-1}(0) \to [0,1]\times X$ factors through a homeomorphism.
$$
(\Sss_M^{01})^{-1}(0)/\Ga^{01}\stackrel{\cong}\longrightarrow  [0,1]\times X 
$$ 
Notice here that  for $\al = 0,1$, the group $\Ga_{01}$ decomposes as a product that we will write $\Ga'_{01\less \al}\times \Ga_\al$, where 
$\Ga'_{01\less \al}$ acts trivially on $(\Sss_M^{01})^{-1}(0)\cap (\im  \io^M_\al)$.  Therefore
there are natural identifications
$$
\bigl((\Sss_M^{01})^{-1}(0) \cap (\im \io^\al_M)\bigr)/\Ga^{01}\;\cong \; \bigl((\Sss_M^{\al})^{-1}(0)\bigr)/\Ga^\al \cong \; \{\al\} \times X\; \subset\; [0,1]\times X.
$$

Thus,  $M^{01}$ is an oriented  branched manifold of dimension $N^{01} + 1$, where $N^{01} = d + \dim E_{A^{01}}$, with boundary that decomposes as a union
\begin{align}\label{eq:EM2}
\p M^{01} =  \sqcup_{\al=0,1} EM^\al   \quad \mbox{ where } \;\;  EM^\al: = \io^\al_M(E_{A^{01}\less A_\al, \,\ve^\al} \times M^\al).
\end{align}
For $\al=0,1$, the branched manifold $EM^\al$  carries a fundamental class $$
\mu_{EM^\al}: = \mu_{E_{A^{01}\less A_\al}} \times \mu_{M^\al}.
$$
Because the isotropy group of the boundary chart labelled $I_\al$ in $\Kk_\al$ equals that of the corresponding cobordism chart in $\Kk^{01}$, the equation \eqref{eq:fundc} is consistent with  the boundary map in the long exact sequence \eqref{eq:homles} for the pair $(M^{01}, \p M^{01})$.
Hence the proof of Lemma~\ref{le:fundM}  adapts to show that the interior of $M^{01}$ also carries a fundamental class 
\begin{align}\label{eq:fundW}
\mu_{M^{01}}\in \check{H}^\infty_{N^{01}+1}(M^{01}\less \p M^{01})
\end{align}
 such that
\begin{align*}%\label{eq:fundW1}
\p \bigl(\mu_{M^{01}}\bigr) = (\mu_{EM^1}, - \mu_{EM^0}) \in   \check{H}^\infty_{N^{01}}(EM^{0})\oplus  \check{H}^\infty_{N^{01}}(EM^{1}) \cong
 \check{H}_{N^{01}}(\p M^{01}),
\end{align*}
where $\p$ is the boundary map in the long exact sequence in \eqref{eq:homles}.

We now apply the cap product in \eqref{eq:commcap}  with 
\begin{align*} & Y = M^{01}, \quad 
 U =  (\Sss_M^{01})^{-1}(E_{A^{01}}\less \{0\}) \;\subset\; M^{01},   %\\  \notag
\quad  A = \sqcup_{\al=0,1} EM^\al. %\\  \notag
\end{align*}
Then $Y\less U = (\Sss_M^{01})^{-1}(0) $ is compact with a natural projection to $[0,1]\times X$ and hence to $X$. Since these maps are proper, any class $b\in \check{H}^d(X)$ pulls back to a class $b_Y\in \check{H}^d(Y\less U)$ such that $\io^*(b_Y) = b_A$ where $\io:A\to Y$ is the inclusion, and $b_A = (b_{0}, b_1)$, where $b_\al$ can be identified with the pullback of $b$ to $(\Sss_M^\al)^{-1}(0) \subset M_\al$.  Hence the cap product $$
(\p \mu_{M^{01}})\cap b_A \in \check{H}^c_{N^{01}}(A, U\cap A)
$$
is in the image of the map $\p'$ in \eqref{eq:commcap} and hence vanishes when pushed forward to $\check{H}^c_{N^{01}}(Y, U)$.  But there is a commutative diagram
\[
 \xymatrix
 {
 (\p\mu_{M^{01}}) \cap b_A \ar@{|->}[d]_{\io_*} \hspace{-.3in}&\in &\hspace{-.3in}  \check{H}^c_{N^{01}} (A, U\cap A) \ar@{->}[d]^{\io_*}  \ar@{->}[r]^{ \Sss_M\;\;\;\;\qquad} & \check{H}^c_{N^{01}} ( E_{A^{01}},  E_{A^{01}}\less \{0\})  \ar@{->}[d]^{=}\\
 0\hspace{-.1in}&\in & \hspace{-.3in}    \check{H}^c_{N^{01}} (Y, U) \ar@{->}[r]^{ \Sss_M\;\;\;\qquad\;\;} & \check{H}^c_{N^{01}} ( E_{A^{01}},  E_{A^{01}}\less \{0\}) .
}
\]
Hence $(\Sss_M)_*\bigl(  (\p \mu_{M^{01}})\cap b_A\bigr) = 0$.  
Since $(\p\mu_{M^{01}}) \cap b_A $ measures the difference between the two classes 
$\mu_{EM^{\al}}\cap b_\al $,  these classes  have the same image in 
$\check{H}^c_{N^{01}} ( E_{A^{01}},  E_{A^{01}}\less \{0\})$, as claimed.
\MS

\NI {\bf Step 3:} {\it Agreement with previous definition in the smooth case.}
It remains to show that in the smooth case the class $[X]^{vir}_\Kk$ constructed here agrees with that constructed in \cite[\S3]{MWiso}. 
The idea there was to construct a small smooth perturbation functor\footnote
{
\, for notation see  \eqref{eq:TVIJ}}
$$
\nu = (\nu_I): \bB_\Kk|_\Vv^{\less \Ga}\to \bE_\Kk|_\Vv^{\less \Ga}
$$
 such that $s_I + \nu_I$ is transverse to zero for all $I$, and then assemble the resulting zero sets $Z^\nu_I: = (s_1+\nu_I)^{-1}(0)\subset V_I$ into a weighted branched manifold 
$Z^\nu: = |\Hat Z_\Hh^\nu|$.  
Note that $Z^\nu$ is oriented and has a weight-preserving natural inclusion into $M$, i.e. each branch of    $Z^\nu$ is a submanifold of  a branch of $M$ with the same weight.
Now choose a sequence $\nu_k$ of perturbation sections with  $\|\nu_k\|\to 0$. There is a corresponding nested sequence of  neighborhoods $B_{\eps_k}(\io_\Kk(X))$ of the zero set $X\cong \io_\Kk(X) \subset |\Vv|$  with intersection equal to $\io_\Kk(X)$. 
Then the zero sets $Z^{\nu_k}$ map to $B_{\eps_k}(\io_\Kk(X))\subset |\Vv|$, and we showed in \cite[Thm.~3.3.5]{MWiso} that for  all $\ell>k$  
the two branched manifolds $Z^{\nu_\ell},  Z^{\nu_k}$ are cobordant in  $B_{\eps_k}(\io_\Kk(X))$ and hence represent the same homology class in $B_{\eps_k}(\io_\Kk(X))$.  It  follows
from the tautness property of rational \c{C}ech homology (see \S\ref{ss:app}(h\,$'$)) that the inverse limit of this sequence of classes in $B_{\eps_k}(\io_\Kk(X))$ determines a unique element 
of $\check{H}_d (\io_\Kk(X);\Q) \cong \check{H}_d (X;\Q)$ that we called  $[X]^{vir}_\Kk$ and showed to be  independent of all choices.

We now interpret this construction in the current setting. As above, fix  a compact neighborhood\footnote
{
\, One important difference between $|\Vv|$ and $M$ is that the zero set $|\s|^{-1}(0)$ does {\it not} have compact neighborhoods in $|\Vv|$
by \cite[Ex.~6.1.11]{MW2}, while it does in the branched manifold  $M$.}
 $\ov{\Nn_0}$ of   $\Sss_M^{-1}(0)$, so that
$$
 \de_0: =   \inf \bigl\{\|\Sss_M(x)\|: x\in \Fr{\Nn_0}: = \ov{\Nn_0}\less \Nn_0\bigr\} > 0,
$$
and choose a nested sequence $\ov{\Nn_k}$ of compact neighborhoods of $\Sss_M^{-1}(0)$ such that
\begin{align*} 
&  \textstyle{ \bigcap _k}\; \ov{\Nn_k}  = \Sss_M^{-1}(0), 
\quad  \Sss_M(\ov \Nn_k) \subset E_{A, \de_k}, \;\;\mbox { where } \de_{k+1}<  \de_k< \de_0.
  \end{align*}
Choose  a corresponding sequence of transverse perturbation sections
$\nu_k = (\nu_{k,I})$ 
such that the perturbed zero set $(s_I + \nu_{k,I})^{-1}(0)$ is contained in $V_I\cap \pi_\Kk^{-1}(\Nn_k)$ for all $I$,
and  for each $k$, consider the  map 
$$
\Hat{\nu}_k: M\to E_A,  \quad \Hat{\nu}_k \bigl(\pi_I(e_{A\less I},x)\bigr)= \nu_k(x) \in E_I\subset E_A.
$$
This is well defined because $\nu_k: \bB_\Kk|_\Vv^{\less \Ga}\to \bE_\Kk|_\Vv^{\less \Ga}$ is a functor.
Then
\begin{align*}
\pr_{E_{A\less I}} \bigl( (\Sss_M + \Hat{\nu}_k) (\pi_I(e_{A\less I}, x))\bigr) &  = \pr_{E_{A\less I}} \bigl( \Sss_M  (\pi_I(e_{A\less I}, x))\bigr)  \ne 0 \mbox{ if }e_{A\less I}\ne 0, \;\;\mbox{ while}\\
\pr_{E_{I}}\bigl(  (\Sss_M + \Hat{\nu}_k) (\pi_I(e_{A\less I}, x))\bigr) & = (s_I + \nu_{k,I}) (x).
\end{align*}
Therefore we may identify the weighted branched manifold $Z^{\nu_k}$ with the perturbed zero set
$$
(\Sss_M + \Hat{\nu}_k)^{-1}(0)\;\subset\; \Nn_k\;\subset\;  \Sss_M^{-1}(E_{A,\de_k}).
$$
Given $b \in \check{H}^d(X;\Q)$,  choose a sequence $b_k\in \check{H}^d(\ov{\Nn_k};\Q)$  such that
  $
  \underset{{\leftarrow}}\lim\; b_k = \psi^*(b),
  $
  where $\psi: \Sss_M^{-1}(0) \to X$ is the footprint map, and let $\io_k: Z^{\nu^k}\to \Nn_k$ be the inclusion.  We must show that
  $$
{\textstyle \lim_k}\; \; \langle  \mu_{Z^{\nu_k}},\, \io_k^*(b_k) \rangle =  (\Sss_M)_* (\mu_M\cap \psi^*(b)) \in \Q.
 $$
  Consider the diagram below, in which the top and bottom square commute while the middle homotopy commutes:
  \[
 \xymatrix
 {
 (M, M\less \Sss_M^{-1}(0))  \ar@{->}[r]^{\; \Sss_M} & (E_A, E_A\less \{0\}) \\
 (M, M\less \ov \Nn_k) \ar@{ ->}[u]^{\io} \ar@{<->}[d]_{=}  \ar@{->}[r]^{\; \Sss_M } & (E_A, E_A\less \{0\}) \ar@{<->}[u]_{=}\ar@{<->}[d]^{=}\\
  (M, M\less \ov \Nn_k)  \ar@{->}[d]_{\io} \ar@{->}[r]^{ \Sss_M + \nu_k\;} & (E_A, E_A\less \{0\})\ar@{<->}[d]^{=} \\
 (M,M\less Z^{\nu_k})  \ar@{->}[r]^{\Sss_M + \nu_k\;} & (E_A, E_A\less \{0\}).
}
\]
Because $Z^{\nu_k}$ is a weighted branched smooth submanifold of $M$ with orientation and weights compatible  with that of $E_A$ and $M$, its fundamental class 
$\mu_{Z^{\nu_k}}$ satisfies
\begin{align*}%\label{eq:muk}
 \mu_{Z^{\nu_k}}& = \mu_M\cap \bigl(( \Sss_M + \nu_k)^*(\oo_E)\bigr)  \in H_d(Z^{\nu_k}, \Q),
\end{align*}
where $\oo_E\in H^{\dim E_A}(E_A, E_A\less \{0\})$ is the natural generator.\footnote
{\, Note that we can use singular homology since we can assume that $Z^{\nu_k}$ and $M$ are simplicial complexes by \cite{Mbr}.}
This immediately implies that  $$
 \big\langle  \mu_{Z^{\nu_k}},\, \io_k^*(b_k) \big\rangle =  \big\langle ( \Sss_M + \nu_k)_*( \mu_M \cap \io_k^*(b_k)), \oo_E  \big\rangle\; \in \Q .  
  $$
Now note that the commutativity of  the above diagram implies  that   $$
  \underset{{\leftarrow}}\lim\; ( \Sss_M + \nu_k)_*( \mu_M \cap \io_k^*(b_k)) = 
( \Sss_M)_*( \mu_M \cap \psi^*(b))\; \in \; H_{\dim E_A}(M, M\less \Sss_M^{-1}(0)).
$$
The result follows. \end{proof}

With a little more work, we can prove that our construction extends to atlases for compact pairs $(W,X)$ as in
\cite[Lemma~5.2.4]{P}.  The following lemma defines  %define $ [W]^{vir}_\Kk$ as an element of 
$$
[W]^{vir}_\Kk\;\;  \in\;\;  \check{H}_{d+1}^\infty(W\less X) 
 = \Hom (\check{H}^{d+1}(W\less X); \Q),
 $$
 Note that $\check{H}_{d+1}^\infty(W\less X)  = \check{H}_{d+1}^c(W, X) $ by \S\ref{ss:app}~property (g$'$).
% which   is the theory dual to the rational \c{C}ech cohomology
%of the \lq interior' $W\less X$ of $W$. 

\begin{lemma}\label{le:boundvir}
Given an oriented $(d+1)$-dimensional Kuranishi atlas $\Kk^W$ with boundary on  a compact pair $(W, X: = \p W)$, there is 
an associated virtual fundamental class $[W]^{vir}_\Kk \in \check{H}_{d+1}^\infty(W\less X)$ such that 
\begin{align} \label{eq:virb}
\p ( [W]^{vir}_\Kk) = [X]^{vir}_\Kk\;   \in \; \check{H}_{d}^\infty(X) = \check{H}_{d}^c(X).
\end{align}
 where $\p$ is the differential in the long exact sequence \eqref{eq:homles}. In particular, the image of $[X]^{vir}_\Kk$ in 
 $\check{H}_{d}^\infty(W) = \check{H}_{d}^c(W)$ is zero.
 %$\check{H}_{d+1}^\infty(W\less \p W): = \Hom (\check{H}^{d+1}(W\less \p W); \Q)$  is the theory dual to \c{C}ech cohomology.
\end{lemma}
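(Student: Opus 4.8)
The plan is to mimic the construction that produces $M$ from a $d$-dimensional atlas, but now applied to the $(d+1)$-dimensional atlas with boundary $\Kk^W$, using the same machinery (the manifolds $Y_{\Vv,J,\,\ve}$, their collars, the category $\bM$, and the passage to the weighted branched manifold) already developed in \S\ref{s:out}--\S\ref{s:det}. First I would observe that the construction of Proposition~\ref{prop:M} and Theorem~\ref{thm:M} goes through verbatim when the chart domains $U_I$ are manifolds with boundary, provided one requires, as in the cobordism case treated in Step~2 of the proof of Proposition~\ref{prop:main1}, that the local product structures \eqref{eq:subm0} extend over the boundary and that the charts whose footprints meet $X=\p W$ are manifolds with boundary. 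This produces an oriented weighted branched manifold with boundary $(M^W,\La^W)$ of dimension $N^W+1$, where $N^W = d+\dim E_{A^W}$, together with a $\Ga_{A^W}$-equivariant map $\Sss_M^W: M^W\to E_{A^W}$ whose zero set is compact with $(\Sss_M^W)^{-1}(0)/\Ga_{A^W}\cong W$, and whose boundary $\p M^W$ is (after a product with the extra obstruction spaces, exactly as in \eqref{eq:EM}--\eqref{eq:EM1}) identified with the branched manifold built from the induced atlas $\Kk$ on $X$, in such a way that $\Sss^W_M$ restricts on $\p M^W$ to $\Sss_M\circ\pr_M$.

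Next, I would invoke Lemma~\ref{le:fundM} together with the boundary version of it used in the cobordism argument: the interior of $M^W$ carries a fundamental class $\mu_{M^W}\in\check{H}^\infty_{N^W+1}(M^W\less\p M^W)$ with $\p(\mu_{M^W}) = \mu_{\p M^W}$ (identified via \eqref{eq:fundW1} with $\mu_{E_{A^W\less A}}\times\mu_M$ up to the orientation sign built into the convention for boundaries of cobordism atlases). Then, paralleling Step~1 of the proof of Proposition~\ref{prop:main1}, I would define $[W]^{vir}_\Kk\in\check{H}^\infty_{d+1}(W\less X)$ by the formula
\begin{align*}
\langle\be,[W]^{vir}_\Kk\rangle := (\Sss^W_M)_*\bigl(\mu_{M^W}\cap (\pr\circ\psi^W)^*(\be)\bigr)\in\check{H}^c_{\dim E_{A^W}}(E_{A^W},E_{A^W}\less\{0\};\Q)\cong\Q,
\end{align*}
for $\be\in\check{H}^{d+1}(W\less X;\Q)$, where $\psi^W$ is the footprint map and the cap product is the one from \S\ref{ss:app}~(f$'$); here one uses that the restriction of $\Sss^W_M$ away from its zero set has image in $E_{A^W}\less\{0\}$ so the relevant relative classes make sense, and the identification $\check{H}^\infty_{d+1}(W\less X) = \check{H}^c_{d+1}(W,X)$ from \S\ref{ss:app}~(g$'$). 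The relation \eqref{eq:virb} then follows from the commutativity of cap product with the boundary map $\p$ in the long exact sequence \eqref{eq:homles} applied to the triple $\bigl(M^W,\, M^W\less(\Sss^W_M)^{-1}(0),\, \p M^W\bigr)$, exactly as in the diagram at the end of Step~2 of the proof of Proposition~\ref{prop:main1}: pushing $\p\mu_{M^W}$ forward under $\Sss^W_M$ and capping with the pullback of $\be$ computes $[X]^{vir}_\Kk$ on the boundary component and $0$ from the interior, since $\p\mu_{M^W}$ bounds in $M^W$. The final clause, that the image of $[X]^{vir}_\Kk$ in $\check{H}^\infty_d(W)=\check{H}^c_d(W)$ vanishes, is then immediate from \eqref{eq:virb} and exactness of the sequence \eqref{eq:homles} for the pair $(W,X)$, since $[X]^{vir}_\Kk = \p([W]^{vir}_\Kk)$ lies in the image of the boundary map and hence maps to zero under $\check{H}^\infty_d(X)\to\check{H}^\infty_d(W)$.

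The main obstacle I anticipate is verifying the compatible-collar construction of Proposition~\ref{prop:col} in the presence of the genuine manifold boundary $X = \p W$: one must choose the local product structures \eqref{eq:subm0}, the adapted covers $(|W_\al|)$, and the partitions of unity \eqref{eq:rhokk} so that the collars $c^Y_J$ on the $Y_{\Vv,J,\,\ve}$ are simultaneously compatible with the internal \lq\lq corners'' coming from the poset (as in the closed case) \emph{and} with the external boundary coming from $\p W$, without the two interfering. This is essentially the same bookkeeping already carried out for cobordism atlases in Step~2 of the proof of Proposition~\ref{prop:main1} — where \cite[Thm.~7.1.5]{MW2} is used to extend reductions over the boundary and the collars are chosen to extend those on the boundary faces — so I would cite that argument and only indicate the (routine) modifications needed when $\p W$ is connected rather than a disjoint union $\{0,1\}\times X$; the orientation sign conventions near $\p M^W$ need to be tracked so that \eqref{eq:virb} holds with the correct sign.
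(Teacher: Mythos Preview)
Your proposal is correct and follows essentially the same approach as the paper: both adapt the cobordism construction (Step~2 of Proposition~\ref{prop:main1}) to the one-ended case $\Kk^0=\emptyset$, build $(M^W,\La^W)$ with $\p M^W \cong E_{A^W\less A}\times M^X$, define $[W]^{vir}_\Kk$ by the cap-product formula, and deduce \eqref{eq:virb} from the compatibility of the cap product with the boundary/coboundary maps (the paper makes this explicit via the identity $j_*\bigl((\p\mu_M^W)\cap\be'\bigr)=\io_*\bigl(\mu_M^W\cap(\de\be')\bigr)$, i.e.\ $\langle\p[W]^{vir}_\Kk,\be\rangle=\langle[W]^{vir}_\Kk,\de\be\rangle$). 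Your discussion of the collar-compatibility obstacle is apt and is handled exactly as you suggest, by citing the cobordism argument.
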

\begin{proof}
We define the notion of  an oriented $(d+1)$-dimensional Kuranishi atlas 
$(\Kk,\p \Kk)$ 
for the pair $(W, \p W)$ by replacing $[0,1]\times X$ by $W$ in the above definition of a cobordism atlas.
Thus we take $\Kk^{01}=:\Kk^W$ to be an atlas  for $W$, $\Kk^1=: \Kk^X$ an atlas for $X$ and $\Kk^0$ to be empty, and assume the obvious analogs of (i) --(iii) above.
Then, given a branched manifold  $(M^X, \La^X)$ constructed from $\Kk^X$,  we may construct a branched manifold $(M^W,\La^W)$ with boundary  
$$
\p(M^W) =  E_{A^{W}\less A^X} \times M^X,
$$
 and extend $\id\times \Sss_{X}$ from $\p(M^W)$ to a map $\Sss_{W}: M^{W}\to E_{A^{W}}$ that satisfies the analogs of equations \eqref{eq:EM1} and  \eqref{eq:EM2} above. Further,  using the fundamental class 
$\mu_M^{W}\in H_{N^W}^\infty(M^W\less \p M^W)$
defined as in \eqref{eq:fundW}, %defines 
% satisfies the analog of 
  we define an element  
%  $ [W]^{vir}_\Kk \in \check{H}^c_{d+1}(W, \p W)$
%by applying the 
\begin{align*}
& [W]^{vir}_\Kk \in \check{H}_{d+1}^\infty(W\less X) \quad\mbox{ by setting }  \\ \notag
&\langle  [W]^{vir}_\Kk,\, b \rangle : = (\Sss_{M^W})_*( \Hat{b})\in \check{H}_{\dim E_{A^{W}}}(E_{A^{W}}, E_{A^{W}}\less \{0\};\Q) \cong \Q,
\end{align*}
where $\Hat b$
%\in \check{H}_{\dim E_{A^{W}}}(M^W \less \p M^W, \Sss_{M^W}^{-1}(E_{A^{W}}\less \{0\});\Q)$
 is defined as follows.    Let
$$
Y = M^W, \quad A = \p M^W, \quad U = \Sss_W^{-1}(E_{A^W}\less \{0\}).
$$
%Y = M^W\less \p M^W, \;\;   U = M^W \less \bigl(\p M^W \cup \Sss_{W}^{-1}(0)\bigr), \;\; 
% Y\less U = \Sss_{M^W}^{-1}(0) \less \p M^W.
% $$  
 Then
the pullback $\psi^* b\in \check{H}^{d+1} (Y\less (U\cup A); \Q)$ %(\Sss_{M^W}^{-1}(0) \less \p M^W);\Q)$ 
of  $b \in \check{H}^{d+1}(W\less X;\Q)$  
determines  $$
\Hat{b} : = \mu_{M}^{W}\cap \psi^*b \; \in \check{H}^c_{\dim E_{A^{W}}}\bigl(Y\less A, U\less A;\Q\bigr)
$$
where 
$
\cap:  \check{H}^\infty_{p+q}(Y\less A) \otimes \check{H}^p(Y\less U) \to  \check{H}^c_{q}(Y, U\cup A)$
is as in \eqref{eq:cap} with $A = \emptyset.$

To prove \eqref{eq:virb}, note that in the following  diagram
(with the same $Y,U,A$)
 \begin{align*} %\label{eq:commcap2}
 \xymatrix
 {
 \check{H}^\infty_{p+q + 1}(Y\less A)  \ar@{->}[d]_{\p} \hspace{-.4in}& 
\otimes& \hspace{-.4in}\check{H}^{p+1}(Y\less (A\cup U))  %\ar@{->}[d]_{\p\otimes i^*}
 \ar@{->}[r]^{\cap\;\;\;}   &  \check{H}^c_{q}(Y\less A, U\less A) 
 \ar@{->}[r]^{\;\;\;\io_*}  
 & \check{H}^c_{q}(Y, U) %\ar@{->}[d]^{\io_*}
 \\
 \check{H}^\infty_{p+q}(A)   \hspace{-.3in} & \otimes& \hspace{-.3in} \check{H}^p(A \less U)  \ar@{->}[u] ^{\de} \ar@{->}[r]^{ \cap\;\;\;\;\;}  &
  % \check{H}^c_{q}(A, A\cap U) \ar@{->}^= 
%\hspace{.4in}
 \check{H}^c_{q}(A, A\cap U) \ar@{->}[ur]^{j_*} & %X
}
\end{align*}
%where now $Y = M^W, A = \p M^W, U = \Sss_W^{-1}(E_{A^W}\less \{0\}) $, 
we have
$$
j_*\bigl( (\p \mu_M^W)\cap b' \bigr) = \io_*\bigl(\mu_M^W \cap (\de b')\bigr) \;\;  \in\;\;  \check{H}^c_{q}(Y, U),
$$
for all $\mu \in  \check{H}^\infty_{p+q + 1}(Y\less A) $ and $b'\in  \check{H}^p(A \less U)$, where $\de$ is as in \eqref{eq:Ales}.\footnote{
\, This extension to property (B5)  on \cite[p.336]{Ma} holds by combining Properties (B4) and (B6).}\;
Since  $ \psi^*$ commutes with $\de$, this implies
$$
\bigl\langle \p ( [W]^{vir}_\Kk), b\bigr\rangle  = \bigl\langle  [W]^{vir}_\Kk, \de b\bigr\rangle, \quad\forall\; b\in   \check{H}^{d}(X).
$$
The result follows.
%If $\be' = \psi^*(\be)$ for $\be\in \check{H}^{d}(X)$, then  $\p'\be' = \psi^*(\p'\be)\in \check{H}^{d+1}(W\less X)$ by the naturality of the long exact sequence \eqref{eq:homles}.  Hence $\p ( [W]^{vir}_\Kk) = [X]^{vir}_\Kk$ as claimed.
\end{proof}

\section{Further details and constructions}\label{s:det}

In \S\ref{ss:Y} we first define the notion of a compatible shrinking $(\Uu,\ve)$ and 
prove Proposition~\ref{prop:Y}.  We then introduce the more intricate notion of a  compatible reduction $(\Vv,\ve)$,
which involves not only the compatibility of $\Vv$ with a set of constants $\ve$ but also its compatibility with a suitable cover 
of the set of overlaps in $|\Vv|$, properties that are essential 
 for the proof in  \S\ref{ss:col} 
 that  $Y_{\Vv, J,\, \ve}$ has a collar that satisfies  the conditions listed in
Proposition~\ref{prop:col}.  

\subsection{Shrinkings and the manifold $Y$}\label{ss:Y}

We  assume given an ambient preshrunk tame\footnote
{
\, For terminology see \S\ref{ss:defs}. }
 atlas $\Kk^\Om$ with chart domains $\Uu^\Om$, together with
a tame shrinking $\Uu^\infty \sqsubset \Uu^\Om$, 
and then choose a further shrinking $\Ff^0$ of the footprints $\Ff^\infty$ of $\Uu^\infty$.
 For short we write $\psi^{-1}(\Ff^0)\sqsubset \Uu^\infty\sqsubset \Uu^\Om$.  By the submersion axiom and the
 precompactness of $\TU_{IK}^\infty$ in $\TU_{IK}^\Om$ 
for each $I\subsetneq K$, we may  choose a finite set of points ${z_\al} \in \TU_{IK}^\Om$, constants $\eps_\al>0$, and 
 $\Ga_{z_\al}$-equivariant local homeomorphisms
\begin{align}\label{eq:dom0}
\phi^E_{IK,z_\al}: E_{K\less I, \eps_\al}\times \TW_{IK,z_\al}\to  U_K^\Om,\qquad  1\le \al\le A_{IK},
\end{align} 
where $ \TW_{IK,z_\al}\subset \TU_{IK}^\Om$ is open, such that
\begin{align}\label{eq:dom1}
&s_{K\less I}\circ \phi^E_{IK,z_\al}(e,y) = e, \quad \mbox{ and } \\ \notag
&\TU^\infty_{IK} \;\; \subset\;  \bigcup_{1\le \al \le A_{IK}}  \TW_{IK,z_\al} \subset\; \TU_{IK}^\Om,\qquad \forall\; I\subsetneq K.
\end{align} 
We may and will assume that each $\phi^E_{IK,z_\al}$ is $\Ga_K$-equivariant.   (To do this, 
first shrink the $\TW_{IK,z_\al}$ so that they have disjoint images under the group $\Ga_K/\Ga_{z_\al}$, and then replace them by their orbit under $\Ga_K/\Ga_{z_\al}$.)  
\MS

Our first task is to make a preliminary choice of shrinking so that the space $Y_{\Uu,J,\,\ve}$  is a manifold with boundary.
In the following definition,  condition  (b)  ensures that the charts are compatible with a fixed shrinking of the zero sets, while conditions (a) and (c) have already been used in the proof of Corollary~\ref{cor:bary}.  
Some version of condition (d) is an essential ingredient in the proof that $Y_{\Uu,J,\,\ve}$ is a manifold with boundary; see \eqref{eq:phiYI} below.  
As we saw in \eqref{eq:deIJ} and \eqref{eq:YIJ}, 
the  elements $(e,x;t)\in \p_{J\less I} Y_{\Uu,J,\,\ve}$ have $x\in \TU_{IJ}$ and  $\|e\|< \ka \eps_{I(x)}$ where 
$I(x)\subset I \subsetneq J$.  Therefore in order for the domain of the local chart in  \eqref{eq:phiYI}  to include all the boundary points of $Y_{\Uu,J,\,\ve}$,  we do need  the map $\phi^E_{IK,\eps}$ to be defined using a constant $\eps$ that is 
$>\ka\eps_I$, and we have chosen to use $(\ka+1)\eps_I$ for convenience and precision.  Note also that we do not insist that the image of the map 
$\phi^E_{IK}$ in \eqref{eq:phiE01} below is contained in $\Uu$ or even in $\Uu^\infty$.  Such a requirement comes later; see \eqref{eq:phiIHH}.

 \begin{defn}\label{def:compat}  Given $\psi^{-1}(\Ff_0)\sqsubset \Uu^\infty\sqsubset  \Uu^\Om$ as above,  we will say that a shrinking $\Uu$ and set of positive constants $\ve: = (\eps_K)_{K\in \Ii_\Kk}$ are {\bf $(\Gg_0, \Uu^\infty)$-compatible} 
 if the following holds:
 \begin{enumerate}\item[\rm (a)]   $0<\ka\, \eps_I < \eps_K$ if $I\subsetneq K$  (see \eqref{eq:ve});\vspace{.05in}
   \item[\rm (b)]    $
 \psi^{-1}(\Ff^0)\sqsubset  \Uu\sqsubset \Uu^\infty$; \vspace{.05in}
 \item[\rm (c)] 
 $s_I(\ov{U_I})\subset E_{I,\eps_I}$ for all $I$;\vspace{.05in}
    \item[\rm (d)]  for all $I\subsetneq K$, each $ z\in \TU_{IK}\subset U_K$ has a neighborhood $\TOo_{IK}\subset \TU_{IK}$ such that one of the homeomorphisms $\phi^E_{IK,z_\al}$ in \eqref{eq:dom0} restricts to give a 
 map
\begin{align}\label{eq:phiE01}
  \phi^E_{IK}:& E_{K\less I,(\ka+1)\eps_I}\times \TOo_{IK}\to U^\Om_K
\end{align} 
  that is a homeomorphism to its image, where $\ka: = \max \{|K|: K\in \Ii_\Kk\}$.
  \end{enumerate}
    For simplicity we call  the  pair $(\Uu,\,\ve)$ a {\bf compatible shrinking}. 
\end{defn}
    
    \begin{lemma}\label{le:Ueps} Suppose given  $\psi^{-1}(\Gg_0)\sqsubset \psi^{-1}(\Ff_0)\sqsubset \Uu^\infty\sqsubset  \Uu^\Om$ as above.  Then there is  an
 $(\Ff_0, \Uu^\infty)$-compatible shrinking $(\Uu,\,\ve)$. 
  \end{lemma}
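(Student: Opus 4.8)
The plan is to split the construction into two stages — first pin down the shrinking $\Uu$, then choose the constants $\ve$ — because the requirements on the $\eps_I$ pull in opposite directions: condition~(c) of Definition~\ref{def:compat} forces $\eps_I>\sup_{\ov{U_I}}\|s_I\|$, while (a) forces $\ka\,\eps_I<\eps_K$ for every $K\supsetneq I$ and (d) forces $\eps_I$ below a fixed ceiling. The piece of data that is \emph{not} at our disposal — and the reason the argument closes — is that the local product homeomorphisms $\phi^E_{IK,z_\al}$ of \eqref{eq:dom0} and their widths $\eps_\al=\eps_\al(I,K)$ are fixed once and for all from the submersion axiom before $\Uu$ and $\ve$, and satisfy $\TU^\infty_{IK}\subset\bigcup_{1\le\al\le A_{IK}}\TW_{IK,z_\al}$ by \eqref{eq:dom1}. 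Hence
$$
\theta:=\tfrac1{\ka+1}\,\min\bigl\{\eps_\al(I,K)\ \big|\ I\subsetneq K\in\Ii_\Kk,\ 1\le\al\le A_{IK}\bigr\}
$$
is a fixed positive number, and condition~(d) holds automatically for \emph{any} shrinking $\Uu\sqsubset\Uu^\infty$ whose constants satisfy $\eps_I<\theta$ for all $I$: given $I\subsetneq K$ and $z\in\TU_{IK}\subset\TU^\infty_{IK}$, pick $\al$ with $z\in\TW_{IK,z_\al}$, set $\TOo_{IK}:=\TU_{IK}\cap\TW_{IK,z_\al}$ (an open neighborhood of $z$ in $\TU_{IK}$), and note that since $(\ka+1)\eps_I<(\ka+1)\theta\le\eps_\al$ one has $E_{K\less I,(\ka+1)\eps_I}\times\TOo_{IK}\subset E_{K\less I,\eps_\al}\times\TW_{IK,z_\al}$, so $\phi^E_{IK}:=\phi^E_{IK,z_\al}\big|_{E_{K\less I,(\ka+1)\eps_I}\times\TOo_{IK}}$ is the restriction of a homeomorphism to an open subset, hence a homeomorphism onto its (open) image in $U^\Om_K$, as \eqref{eq:phiE01} requires.

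First I would choose $\Uu$ small enough that $\sup_{\ov{U_I}}\|s_I\|$ drops below a $\theta$-controlled threshold. For each $I\in\Ii_\Kk$ the set $C_I:=\ov{\psi_I^{-1}(F^0_I)}$ (closure in $U_I^\infty$) is a compact $\Ga_I$-invariant subset of $U_I^\infty$ by the hypothesis $\psi^{-1}(\Ff^0)\sqsubset\Uu^\infty$, and it lies in $s_I^{-1}(0)$ since that set is closed; in particular $s_I$ vanishes on $C_I$. Thus $\{x\in U_I^\infty \mid \|s_I(x)\|<\theta\,(\ka+1)^{|I|-\ka}\}$ is a $\Ga_I$-invariant open neighborhood of $C_I$, and, using that the manifold $U_I^\infty$ is locally compact and $\Ga_I$ is finite, one may pick a $\Ga_I$-invariant open $U_I$ with $C_I\subset U_I$ and $\ov{U_I}$ a compact subset of this neighborhood. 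The family $\Uu=(U_I)_{I\in\Ii_\Kk}$ then satisfies $\psi^{-1}(\Ff^0)\sqsubset\Uu\sqsubset\Uu^\infty$; its footprints contain the $F^0_I$, which cover $X$, and taking the coordinate changes to be the restrictions of those of $\Uu^\infty$ makes $\Uu$ a shrinking of the atlas in the sense of \S\ref{ss:defs} (cf.\ \cite[\S3.3]{MW1}). By construction $\max_{\ov{U_I}}\|s_I\|<\theta\,(\ka+1)^{|I|-\ka}$ for every $I$, which establishes (b).

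Then I would set $\eps_I:=\eta\,(\ka+1)^{|I|}$ for $I\in\Ii_\Kk$, where $\eta$ is any constant with
$$
\max_{I\in\Ii_\Kk}\frac{\max_{\ov{U_I}}\|s_I\|}{(\ka+1)^{|I|}}\ <\ \eta\ <\ \frac{\theta}{(\ka+1)^\ka};
$$
such $\eta$ exists because, by the previous step, each term on the left is $<\theta\,(\ka+1)^{|I|-\ka}/(\ka+1)^{|I|}=\theta/(\ka+1)^{\ka}$. With this choice: (c) holds since $\max_{\ov{U_I}}\|s_I\|<\eps_I$; the bound $\eps_I\le\eta\,(\ka+1)^{\ka}<\theta$ holds because $|I|\le\ka$, which by the first paragraph gives (d); and (a) holds because $I\subsetneq K$ forces $|K|>|I|$, whence $\eps_K/\eps_I=(\ka+1)^{|K|-|I|}\ge\ka+1>\ka$. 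Therefore $(\Uu,\ve)$ is an $(\Ff_0,\Uu^\infty)$-compatible shrinking.

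The only genuinely delicate point is reconciling the lower bounds on the $\eps_I$ forced by (c) with the rigid chain of inequalities $\ka\,\eps_I<\eps_K$ and the fixed ceiling $\theta$; this is precisely why one must shrink the domains before selecting the constants, and it works because $s_I$ vanishes on the precompact footprint preimage $C_I$, so shrinking $U_I$ toward $C_I$ drives $\sup_{\ov{U_I}}\|s_I\|$ to $0$. Everything else — local compactness of the chart manifolds, equivariance under the finite groups $\Ga_I$, and the fact that restricting the coordinate changes yields a valid shrinking — is routine and covered by the cited references.
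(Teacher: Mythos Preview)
Your verification of (a), (c), and (d) is correct, and the explicit choice $\eps_I=\eta(\ka+1)^{|I|}$ is a clean way to realize the chain of inequalities. However, there is a genuine gap at (b): you have not shown that your family $\Uu=(U_I)$ is a shrinking of $\Uu^\infty$ in the sense of \S\ref{ss:defs}. A shrinking is by definition itself an atlas, and for the subsequent Lemma~\ref{le:Y} it must in fact be tame (the identification $\TU_{IJ}=s_J^{-1}(E_I)$ is invoked there via \eqref{eq:Ix}). Choosing each $U_I$ independently as an arbitrary $\Ga_I$-invariant precompact neighborhood of $C_I$ guarantees neither: nothing forces $\rho_{IJ}(\TU_{IJ}^\infty\cap U_J)\subset U_I$, nor that the resulting footprints satisfy $F_I=\bigcap_{i\in I}F_i$. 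Your citation of \cite[\S3.3]{MW1} does not close this, since the results there produce tame shrinkings by a careful inductive procedure, not by taking arbitrary open subsets chart-by-chart.

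The paper's argument is shorter and sidesteps the issue entirely. It first invokes \cite[Prop.~3.3.5]{MW1} to obtain \emph{some} tame shrinking $\psi^{-1}(\Ff^0)\sqsubset\Uu'\sqsubset\Uu^\infty$, then picks constants $\ve$ satisfying (a) together with $\eps_I<\eps_\al/(\ka+1)$ for all relevant $\al$, and finally sets $U_I:=U_I'\cap s_I^{-1}(E_{I,\eps_I})$. This cut-down automatically preserves tameness, precisely because the coordinate changes intertwine the sections ($s_I\circ\rho_{IJ}=s_J$ on $\TU_{IJ}$) and the canonical inclusion $\Hat\phi_{IJ}:E_I\hookrightarrow E_J$ preserves the norm. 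Condition~(c) then holds tautologically, so there is no need to first drive $\sup_{\ov{U_I}}\|s_I\|$ below a threshold and then calibrate the $\eps_I$ against it as you do.
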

 
 \begin{proof} First choose any tame shrinking $\Uu'$ such that  $\psi^{-1}(\Ff^0)\sqsubset  \Uu'\sqsubset \Uu^\infty$, which is possible by \cite[Prop.~3.3.5]{MW1}.  Then each set $U_I'$ is covered by a finite number of the sets $\TW_{IK,z_\al}$ in \eqref{eq:dom0} and we  choose 
 any set of constants $\ve$ satisfying (a), and also so that $\eps_I<\frac {\eps_\al}{\ka+1}$ for all relevant $\al$.  Then, if we define
 $U_I: = U_I'\cap s_I^{-1}(E_{I, \eps_I})$, property (d) holds.  
Further, $\Uu: = (U_I)$ is  a tame shrinking of $\Uu^\infty$  because
the coordinate changes commute with the section maps $s_I$ and preserve the norms $\|\cdot \|$ on $E_A$.  (More precisely
\begin{align*}
\Hat\phi_{IK}\circ s_I \circ \rho_{IK} & =  s_K: \TU_{IK}\to E_K, 
\end{align*}
where the canonical inclusion
$\Hat\phi_{IK}: E_I\to E_K $  preserves $\|\cdot \|$, i.e. $\|\Hat\phi_{IK}(e)\| = \|e\|$.)   Hence $\Uu$ satisfies (c) and (b), as required. 
\end{proof}

From now on, we fix $(\Ff^0, \Uu^\infty)$, and hence cease to refer to them explicitly.

\begin{lemma}\label{le:Y}  
If $(\Uu, \ve)$  is  compatible, 
then for each $J$, $Y_{\Uu,J,\, \ve }$ is a manifold  of dimension $D: =  d+ \dim E_A+|J|-1$, with boundary equal to
\begin{align*}
 Y_{\Uu, J,\, \ve }\cap \pr_\De^{-1}(\p \De)& ={\textstyle \bigcup_{I\subsetneq J} \p_{J\less I}  Y_{\Uu, J,\, \ve} }\\
 &  = 
{\textstyle  \bigcup_{I\subsetneq J} \bigl\{(e,x;t): x\in \TU_{IJ}, t\in \p_{J\less I}\De_J\bigr\}.}
\end{align*}
 \end{lemma}
 
\begin{proof}   We  show that each point $(e,x;t)\in Y_{\Uu,J,\, \ve }$ has a neighborhood homeomorphic to an open subset of $(\R_{\ge 0})^k\times \R^{D-k}$, where  
 $k = \# \{j\in J\ \big|\ t_j = 0\}$.  Thus, the projection $\pr_\De:  Y_{\Uu, J,\, \ve }\to \De_J$ is compatible with
 the boundary structure of $\De_J$.

First consider a point $(e,x;t)\in Y_{\Uu,J,\,\ve}$ with $t_i\ne 0$ for all $i\in J$.  Then the coordinates $e_j, j\in J,$ are determined by $(x,t)$ 
via the requirement $s_J(x) = t\cdot e|_J$ while the components of $e|_{A\less J}: = (e_i)_{ i\in A\less J}$ can vary freely.    Hence
the tuple $(e,x;t)$ is uniquely determined by the point $(e|_{A\less J}, x;t) \in E_{A\less J} \times U_J\times \intt\De_J$, and so has a manifold neighborhood of dimension $D$.

It remains to define boundary charts at the points $(e^0,x^0,t^0)\in Y_{\Uu,J,\eps}$ with
$$
t^0\in \p \De_J = {\textstyle \bigcup_{I\subsetneq J} \p _{J\less I}\De_J =:
\bigcup_{I\subsetneq J} \intt \De_I.}
$$
Suppose first
   that $$
I(x^0): =    \{i: s_i(x^0)\ne 0\} =  \{i: t^0_i>0\} =:  I(t^0) =: I,
   $$ 
   so that  $x^0\in \TU_{IJ}$.  
   By \eqref{eq:dom1}, there is a neighborhood $\TOo$ of $x^0$ in $\TU_{IJ}$ 
   that is contained in one of the sets $\TW_{IJ,z_\al}$ in 
  \eqref{eq:dom0}, and below we denote by $\phi$ the associated map $\phi_{IJ, z_\al}^E$.
     There is a corresponding 
  neighborhood of $(e^0,x^0,t^0)$ in $$
  \p_{J\less I} Y_{\Uu, J,\, \ve} \cap \{(e,x;t): e|_{J\less I} = 0\}$$
given by
\begin{align*}
& \TOo'_{I,J, \, \ve}: = \big\{(e_{A\less J}+ t_I^{-1}\cdot s_I(x),x; t_I) \ \big| \ x\in \TOo, t_I\approx t_I^0, \|e_{A\less J}\|<\ka \eps_I\bigr\} 
 %& \hspace{2in}
 \subset \;
 \p_{J\less I} Y_{\Uu, J,\, \ve}. %\cap \{(e,x;t).
\end{align*}
Now consider the map
 \begin{align}\label{eq:phiYI}  
 & \psi:  E_{J\less I,(\ka+1) \eps_I}\times [0,\de)^{|J\less I|}\times \TOo'_{I,J,\, \ve} 
 %\cap ( E_{A\less  (J\less I)}\times \TOo \times \intt \De_I)\bigr)
  \;\longrightarrow\; Y_{\Uu^\Om,J,\, \ve}, %\quad \mbox{ by }
 \\ \notag
 &\qquad \bigl( e_{J\less I}, r_{J\less I}, (e_{A\less J}+ t_I^{-1}\cdot s_I(x),x; t_I)\bigr)\longmapsto  \\ \notag
&\qquad \qquad \bigl(e_{A\less J}+ e_{J\less I}+ (\la t_I)^{-1} \cdot s_{I}(x') ,\, x'; \,  \la t_I + r_{J\less I}\bigr),\\ \notag
&\qquad \qquad  \mbox { where } x': =  \phi\bigl( r_{J\less I}\cdot e_{J\less I}, x) \mbox{ for } \phi : = \phi_{IJ, z_\al}^E,
\\ \notag &\qquad \qquad  \mbox{ and }  \la: = 1-|r_{J\less I}| = 1-{\textstyle \sum_{j\in J\less I} r_j.}
\end{align}
To see that $\psi$ does have image in $Y_{\Uu^\Om,J,\, \ve}$ for sufficiently small $\de>0$ and $\TOo$, we check the  conditions in \eqref{eq:Y} as follows.
\begin{itemlist}\item
By \eqref{eq:dom1} $$
r_{J\less I}\cdot e_{J\less I} = s_{J\less I}\circ  \phi\bigl( r_{J\less I}\cdot e_{J\less I}, x) = s_{J\less I} (x'),
$$
so that the image $(e,x;t)$ of $\psi$ does satisfy the equation $s_J(x) = t\cdot e$
 if $x'\approx x^0$ and $\de>0$ is sufficiently small.
 \item  
 Next, we  check that 
 $ s_J(x') \in  E_{A,\eps_{I(x')}}$.   To this end, note first that because we started by assuming
 $I(x^0) = I$, the definition of $Y_{\Uu,J,\, \ve }$ implies that  $\|s_I(x^0)\|< \eps_I$.
Second, because  $s_I(x')\approx s_I(x^0)$, 
we have $s_I(x') < \eps_I$  for sufficiently small $\de,\TOo$.  But if 
  $r_{J\less I}\ne 0$  we have
 $I(x')\supsetneq I(x^0)$ so that  
 $\eps_I < \frac 1 \ka \eps_{I(x')}$  by \eqref{eq:ve}.  Therefore
because 
$\la\approx 1$ and
we use the sup norm on the product $E_A$, we have 
$$
 s_J(x') = 
e_{J\less I}+ (\la t_I)^{-1} \cdot s_{J\less I}\circ \phi\bigl( r_{J\less I}\cdot e_{J\less I},x) \in  E_{A,\eps_{I(x')}}
$$ 
  for sufficiently small $\de>0$.
\item Since elements in the domain of  $\psi$ have $\|e_{A\less J}\|<\ka \eps_I < \eps_{I(x')}$, %\in  E_{J\less I,(\ka+1) \eps_I}
elements in its image also satisfy this condition.
\end{itemlist}
%Thus the image is in $Y_{\Uu^\Om,J,\,\ve}$.   
It is now easy to check that $\psi$ is a local homeomorphism that equals  
 the identity map when  $r_{J\less I}  = 0$ since  $\phi(0,x) = x$.  Hence its restriction to a suitable open subset
  of its domain provides a
 local boundary  chart for $Y_{\Uu,J,\,\ve}$ at $(e^0,x^0,t^0)$.

 It remains to consider the case when $I = I(x^0)\subsetneq H = I(t^0)$.  In this case, write
 $t^0 = t^0_I +t^0_{H\less I}$.% where $H =  I(x^0) \subset I = I(t^0)$.
 Then the above formula for $\psi$ must be modified as follows:
Denote the elements of $I(t^0)$ by $t_H' =  t_I'  +t_{H\less I}'$.  Then, for $r_{J\less I} \approx 0$, we define
\begin{align}\label{eq:coord}
& \psi\Bigl( e_{J\less I}, r_{J\less I}, \bigl(e_{A\less J}+ (t_H')^{-1}\cdot s_I(x),x; \ t_H'\bigr)\Bigr)
 = \\ \notag
 & \qquad  \qquad  \quad 
 \Bigl(e_{A\less J}+ e_{J\less I} + (t_I'')^{-1}\cdot s_I(x''),\; x''; \; t_J ''\Bigr)
\end{align}
 where 
 \begin{itemize}
 \item $x$ varies in a neighborhood $\TOo\subset \TU_{HJ}$ of $x^0$; 
 \item  $\la < 1$ is chosen so that $t_J'': = \bigl((t_i)''\bigr)_{i\in J}$  
 has $|t_J''|: = \sum_{i\in J} t_i'' = 1$, where 
 \begin{align*} 
 & t''_i =   \la t_i',\mbox { if }  i\in I, \qquad t''_h =  \la t_h' + r_h \mbox { if }  i\in H\less I,  \qquad 
 t''_j = r_j  \mbox { if }  j\in J\less H
 %\end{array}\right.
 \end{align*}
 \item  $x''=\phi\bigl( t_{J\less I}''\cdot e_{J\less I}, x) \in V_J$.
% \item  and $ e_{J\less I}'' = (t_{J\less I}'')^{-1}\cdot $
 \end{itemize} 
%  and for suitable constant $\la >1$ define $(t^0_I)' = \la' t^0_I \in \De_I$, and $J': = J\less H$.  
 Then one can check as above that $\im\, \psi $ is a neighborhood of $(e^0,x^0,t^0)$ in $Y_{\Uu, J,\,\ve}$.
 This completes the proof.
% of (i).
% 
%Since $Y_{\Vv, J, \,\ve}$ is open in $Y_{\Uu, J, \,\ve}$, claim (ii) is an immediate consequence of (i).
\end{proof}

\begin{cor}\label{cor:Y} Proposition~\ref{prop:Y} holds.
\end{cor}
\begin{proof} Combine Lemmas~\ref{le:Ueps} and \ref{le:Y}.
\end{proof}

\begin{rmk}\label{rmk:colprod}\rm 
Notice that in  \eqref{eq:coord}  the coordinates $r_{H\less I}\in \R^{H\less I}$ parametrize directions %normal to 
tangent to   $\p_{J\less H} Y_{\Uu, J,\,\ve}$, while the coordinates $r_{J\less H}\in \R^{J\less H}$ parametrize the directions normal to the codimension $|J\less H|$-face $\p_{J\less H} Y_{\Uu, J,\,\ve}$. \hfill$\er$
\end{rmk}
\MS

We now  define and construct {\bf compatible reductions}  $(\Vv,\,\ve)$.   In order to prove Proposition~\ref{prop:col}, it turns out that we need more control over the sets $\Oo_{IK}$ in Definition~\ref{def:compat}~(d).  
Indeed, because of the consistency requirements on the collar, 
it  is not sufficient to choose the  $\Oo_{IK}$  separately for each pair $I\subsetneq K$; rather they must be chosen consistently for all pairs as we now describe. Further, because the collar has fixed width and image in $Y_{\Vv,J,\, \ve}$,  the product maps in (d) must have image in $V_K$ rather than in $V_K^\Om$.  Then, as we will see in the first step of the proof of Lemma~\ref{le:col} below, they can be used to provide local collars along the boundary of
$Y_{\Vv,J,\, \ve}$

%
%  This notion may seem overcomplicated, but it is crucial to the proof that the local collars  that we will construct over certain thickenings of the sets $|W_\al|$ below  can be assembled into global collars $c^Y_J$ with the consistency properties listed in Proposition~\ref{prop:col}.

Note first that because the local product structures 
\begin{align}\label{eq:phiHJ0}
\phi^E_{IK,z_\al}: E_{K\less I, \eps_\al}\times \TW_{IK,z_\al}\to  U_K^\Om,\qquad  1\le \al\le A_{IK},
\end{align} 
in \eqref{eq:dom0} are equivariant and satisfy $s_{K\less I}\circ \phi^E_{IK,z_\al}(e,y) = e$,
they descend via $\rho_{HK}$ whenever $I\subsetneq H \subsetneq K$.  More precisely, for such $H$
$$
\phi^E_{IK,z_\al}: E_{H\less I, \eps_\al}\times (\TU_{HK}\cap \TW_{IK,z_\al})\to  \TU_{HK}^\Om = s_K^{-1}(E_H)
$$
is the lift of a well-defined map
\begin{align}\label{eq:phiHJ}
& \phi^E_{IH, \rho_{HK}(z_\al)}: E_{H\less I, \eps_\al}\times \rho_{HK}(\TU_{HK}\cap \TW_{IK,z_\al})\to  U_{H}^\Om.
\end{align}
Before defining the notion of compatible reduction, we describe certain covers of the set $\ol( |\Vv|)$
of \lq overlaps' in $|\Vv|$, which is the image in $|\Vv|$ of the relevant part of the boundary of $\bigcup_J Y_{\Vv,J,\,\ve}$. See Figure~\ref{fig:W}.

\begin{figure}[htbp] %  figure placement: here, top, bottom, or page
   \centering
  \includegraphics[width=2.5in]{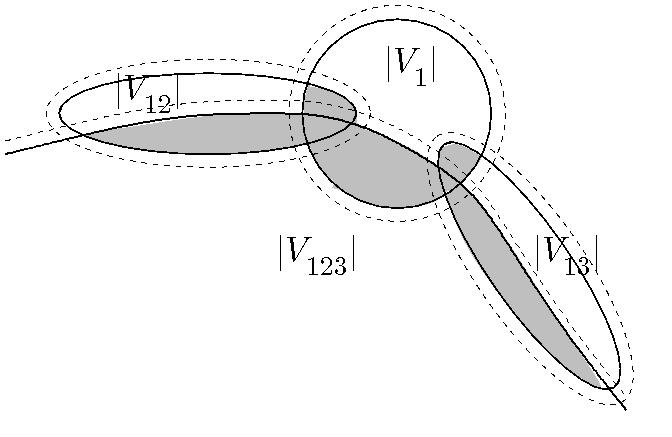} 
   \caption{Here $\ol( |\Vv|)$ is shaded, and the sets $V_I^\Om$ are given by dotted lines; note that  sets $|V_I^\Om|$ and $|V_J^\Om|$ are disjoint unless $I,J$ are nested.}
   \label{fig:W}
\end{figure}

\begin{defn}\label{def:lift} Given a subset $|W|\subset |\Vv^\Om|$ we say that $W\subset V_I^\Om$ is a {\bf lift}  of $|W|$ if
$$
\pi_\Kk(W) = |W|, \quad W = V_I^\Om\cap \pi_{\Kk}^{-1}(|W|),
$$
i.e. $W$ is a \lq full' inverse image of $|W|$ in $\Vv_I^\Om$.
\end{defn}

\begin{lemma}\label{le:W}
If  $(\Uu,\ve )$ is compatible, and $\Vv\sqsubset \Vv^\Om\sqsubset  \Uu $ is any nested reduction, denote by
\begin{align}\label{eq:ol}
\ol( |\Vv|):  = {\textstyle \bigcup_{I\subsetneq K}} |\ov V_{IK}| \;\subset \; |\Vv^\Om|,
\end{align}
the closure of the set of overlaps in $|\Vv|$.  Then we may cover $\ol(|\Vv|)$
 by a finite number of sets $(|W_\al|)_{1\le \al\le N}$, where for each $\al$ there is $\TW_{IK,z_\al}$  as  in \eqref{eq:phiHJ0} such that
$$
W_\al: = \TV_{IK}^\Om \cap \pi_\Kk^{-1}(|W_\al|)\; \subset\; \TW_{IK,z_\al}
$$  
is a lift of $|W_\al|$.
Moreover, we require that $I$ is minimal and $K$ is maximal in the sense that
\begin{enumerate}\item $W_\al$ is an open subset of $\TV_{IK}^\Om$,
\item 
$|V_H|\cap |W_{\al}| \ne \emptyset  \Longrightarrow  I\subset H\subset  K$.
\end{enumerate}
In this situation, we say that $\Vv$ is {\bf adapted to the cover $(|W_\al|)_{1\le \al\le N}$.}
\end{lemma}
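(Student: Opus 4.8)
The plan is to prove the lemma by a compactness argument: for each point of $\ol(|\Vv|)$ I would build a single overlap neighbourhood with all the required properties, and then pass to a finite subcover. First note that $\ol(|\Vv|)$ is compact: each $V_{IK}\subset V_I\sqsubset V_I^\Om$ is precompact, so $\ov V_{IK}$ is compact, hence so is $|\ov V_{IK}|=\pi_\Kk(\ov V_{IK})$, and $\Ii_\Kk$ is finite. Thus it suffices to show that every $p\in\ol(|\Vv|)$ admits indices $I\subsetneq K$ in $\Ii_\Kk$, a product chart $\TW_{IK,z_\al}$ as in \eqref{eq:phiHJ0}, and a neighbourhood $|W|$ of $p$ such that $W:=\TV_{IK}^\Om\cap\pi_\Kk^{-1}(|W|)$ is an open subset of $\TV_{IK}^\Om$, is contained in $\TW_{IK,z_\al}$, satisfies $\pi_\Kk(W)=|W|$, and meets $|V_H|$ only when $I\subset H\subset K$.

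Next I would fix $p$ and choose the indices. Let $\Cc(p):=\{H\in\Ii_\Kk:p\in\pi_\Kk(\ov V_H)\}$. By the reduction property \eqref{Vv} this set is a chain, so it has a minimum $I$ and a maximum $K$. Since $p\in\ol(|\Vv|)$ there are $I'\subsetneq K'$ with $p\in|\ov V_{I'K'}|$, and because $\ov V_{I'K'}\subset\ov V_{I'}$ while $\pi_\Kk(\ov V_{I'K'})\subset\pi_\Kk(\ov V_{K'})$, both $I'$ and $K'$ lie in $\Cc(p)$; hence $I\subseteq I'\subsetneq K'\subseteq K$, and in particular $I\subsetneq K$ as needed.

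Now I would lift $p$ into a product chart. Because $\ov V_H\subset V_H^\Om$ we have $p\in|V_I^\Om|\cap|V_K^\Om|$; pick any lift $\Ti x\in V_K^\Om$ of $p$. Since $|\Ti x|=p\in\pi_\Kk(V_I^\Om)$, tameness of the atlas (the overlap structure of a tame realization, cf. \cite[Thm.~3.1.9]{MW1}) together with the $\Ga_K$-invariance of $\TU_{IK}^\Om$ gives $\Ti x\in\TV_{IK}^\Om\subset\TU_{IK}^\Om$. Moreover $\Ti x\in V_K^\Om\subset U_K\subset U_K^\infty$ (using $\Vv^\Om\sqsubset\Uu$ and the compatibility clause $\Uu\sqsubset\Uu^\infty$), so $\Ti x\in\TU_{IK}^\infty$, and then \eqref{eq:dom1} yields an index $\al$ with $\Ti x\in\TW_{IK,z_\al}$ (here we use that the sets $\TW_{IK,z_\al}$ have been arranged to be $\Ga_K$-invariant). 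Since $\TW_{IK,z_\al}$ is open and $\Ga_K$-invariant and $\pi_\Kk$ restricts to the quotient map $V_K^\Om\to V_K^\Om/\Ga_K\cong|V_K^\Om|$, the image $\pi_\Kk(\TW_{IK,z_\al}\cap\TV_{IK}^\Om)$ is open in $|\TV_{IK}^\Om|$ and contains $p$; taking $|W|$ to be an open neighbourhood of $p$ inside it makes $W=\TV_{IK}^\Om\cap\pi_\Kk^{-1}(|W|)$ open in $\TV_{IK}^\Om$, a lift of $|W|$ (because $|W|\subset|\TV_{IK}^\Om|$), and contained in $\TW_{IK,z_\al}$ (because the preimage of a saturated set into a $\Ga_K$-invariant set lands in $\TW_{IK,z_\al}$).

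Finally, to obtain the minimality--maximality condition (2) I would shrink $|W|$ once more. If $H\in\Ii_\Kk$ does not satisfy $I\subset H\subset K$, then $H\notin\Cc(p)$, i.e. $p\notin\pi_\Kk(\ov V_H)$; as $\pi_\Kk(\ov V_H)\supseteq|V_H|$ is closed, $p$ has a neighbourhood disjoint from it, and intersecting these over the finitely many such $H$ (and with the $|W|$ from the previous step) produces the overlap neighbourhood claimed. Carrying this out at every point of $\ol(|\Vv|)$ gives an open cover of that compact set; a finite subcover, relabelled $(|W_\al|)_{1\le\al\le N}$, proves the lemma, with $\Vv$ declared adapted to it. The step I expect to be the crux is the third paragraph: checking that the canonical lift $\Ti x$ of an arbitrary point of $\ol(|\Vv|)$ lands in a region where a product chart $\TW_{IK,z_\al}$ is actually available. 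This is exactly where the whole chain of shrinkings $\Vv\sqsubset\Vv^\Om\sqsubset\Uu\sqsubset\Uu^\infty$, the tame overlap structure, and the defining property \eqref{eq:dom1} of the $\TW_{IK,z_\al}$ must be combined correctly; the remaining compactness-and-shrinking bookkeeping, and the fact that $W$ is simultaneously open, a lift, and subordinate to $\TW_{IK,z_\al}$, is then routine.
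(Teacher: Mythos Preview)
Your argument is correct and is genuinely different from the paper's. The paper proceeds by downward induction on the level $\ell=|I|$: it fixes a nested family of intermediate shrinkings $\Vv\sqsubset\Vv^1\sqsubset\cdots\sqsubset\Vv^\ka\sqsubset\Vv^\Om$, and at each stage produces a cover of $\bigcup_{|I|\ge\ell}|V_{IK}|$ by (a) trimming the sets from the previous stage (removing the closed pieces $\cl(\TV_{HI}^\ell)$ with $|H|=\ell$ so that condition~(ii) extends to the new level) and (b) adding finitely many new sets whose minimal index has $|I|=\ell$. Your route is a one-shot compactness argument: at each $p\in\ol(|\Vv|)$ you read off $I=\min\Cc(p)$ and $K=\max\Cc(p)$ from the chain of closures through $p$, lift $p$ into $\TV_{IK}^\Om\subset\TU_{IK}^\infty$, invoke \eqref{eq:dom1} to land in some $\TW_{IK,z_\al}$, and then shrink to kill the finitely many noncomparable $|V_H|$. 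Your approach is shorter and avoids the auxiliary shrinkings $\Vv^\ell$; the paper's layered construction makes the stratification by $|I_\al|$ visible during the proof itself, but since that ordering is reimposed separately in Step~3 of Lemma~\ref{le:col} anyway, nothing is lost by your method. The step you flagged as the crux (that the lift $\Ti x$ actually lies in $\TU_{IK}^\infty$) is indeed the only place where the full chain $\Vv^\Om\sqsubset\Uu\sqsubset\Uu^\infty$ and tameness are used, and you handle it correctly.
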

\begin{proof}  Choose compatible shrinkings $\Vv\sqsubset \Vv^1 \sqsubset \cdots \sqsubset  \Vv^\ka\sqsubset  \Vv^\Om \sqsubset \Uu $.
Work by downwards induction on $|I| = \ell\le \ka -1 $ so that at the $\ell$th stage we have a covering $(|W^\ell_\al|)_{\al\in B_{\ell}}$ of
$$
{\textstyle  \bigcup_{I\subsetneq K, \ell\le |I|}}\; |V_{IK}|
$$
with lifts $W_\al^\ell$ satisfying (i) and such that (ii) holds if $|H|\ge \ell$. 
When $\ell = \ka-1$, the existence of the finite covering holds by  the precompactness of $|\Vv|$ in $|\Uu|$ while (ii) is easy to arrange  because the sets $|V_H|$ with $|H|=\ell$ are    disjoint.
Now let us suppose that this holds for $\ell+1$ with the sets $(|W_\al^{\ell+1}|)_{\al\in B_{\ell+1}}$ and consider the statement for $\ell$.

The covering $(|W_\al^{\ell}|)$ will consist of sets of  two kinds:
\begin{itemlist}\item  If $|W_\al^{\ell+1}|$ lifts to  $W_\al^{\ell+1}\subset V_{I}^{\Om}$ where $|I|\ge \ell+1$ is as in (i) then we take the set $|W_\al'|$ where 
$$
W_\al': = W_\al^{\ell+1}\less {\textstyle \bigcup_{|H|=\ell} } \, \cl (\TV_{HI}^\ell).
$$
This is open in $V_{I}^{\Om}$ since we have removed a closed set, and satisfies (ii) for $\ell$.
These sets cover 
$$
\bigl({\textstyle  \bigcup_{I\subsetneq K,  \ell+1\le |I|}}\;  |V_{IK}^\ell|\bigr)\; \less \;
\bigl({\textstyle  \bigcup_{H\subsetneq K, |H|=\ell }}\; \cl(|V_{HK}^\ell|)\bigr).
$$
\item  Next add a finite cover of the compact set ${\textstyle  \bigcup_{H\subsetneq K, |H|=\ell }}\; \cl(|V_{HK}^\ell|)$
by sets $|W_\al|$ whose lifts lie in $V_{HK}^\Om $ where  $|H|=\ell$.    These obviously satisfy (ii).
\end{itemlist}
This completes the proof. \end{proof}

 \begin{rmk}\label{rmk:precomp0} \rm  (i)   If $\Vv$ is  adapted to the cover $(|W_\al|)_{1\le \al\le N}$, and $\Vv'\sqsubset \Vv$ is any shrinking, then 
$ \Vv'$ is  also adapted to the cover $(|W_\al|)_{1\le \al\le N}$.

\NI (ii)
 If $I\subsetneq H$ then in general $\TV_{IH}$ is not closed in $V_H$.  Therefore, in order to cover $\ol(|\Vv|)$ by sets $|W_\al|$ that satisfy condition (ii) in Lemma~\ref{le:W} one cannot insist that each set $|W_\al|$ lift to an open subset of some $V_I$, but rather as in Lemma~\ref{le:W}~(i) that it have a lift to an open subset of some $V_I^\Om\sqsupset V_I$.\hfill$\er$
 \end{rmk}

   \begin{defn}\label{def:compatV}  
 Suppose that  $\psi^{-1} \Gg^0 \sqsubset  \Vv^\Om \sqsubset \Uu  $ where $\Gg^0 \sqsubset \Ff$ is a reduction 
 of the footprint cover   (i.e. $G^0_I\sqsubset F  _I,\, \forall I$ and $\bigcup_I G^0_I = X$), and choose 
 a shrinking $\Vv^\infty\sqsubset \Vv^\om$  that  is adapted to the cover
 $(|W_\al|)_{1\le \al\le N}$ where $|W_\al|\subset |\Vv^\Om|$.   
 With these choices fixed, we then say that the pair $(\Vv, \ve)$ is  {\bf precompatible} if  the following conditions hold.
  \begin{enumerate} 
      \item[\rm (a$'$)]    $0<\ka \eps_I < \eps_J$ for all $I\subsetneq J$, 
 \item[\rm (b$'$)]   $\psi^{-1}(\Gg^0)\sqsubset  \Vv\sqsubset \Vv^\infty$; \vspace{.05in}
        \item[\rm (c$'$)]   $s_I(\ov{V_I})\subset E_{I,\eps_I}$  for all $I$;\vspace{.05in}
   \item[\rm (d$'$)]  for all $\al$ with $W_\al\subset \TV_{IK}^\Om$ and $I\subsetneq H\subset K$
  \begin{align}\label{eq:phiIHH}
     \phi^E_{IH,\al} \bigl(E_{H\less I,(\ka+1) \eps_I}\times (\TV_{IH}\cap \rho_{HK}(W_{\al}\cap \TV_{HK})\bigr) \subset V_H.
   \end{align}
  \end{enumerate}
   Further, we say that $(\Vv, \ve)$ is  {\bf compatible} if  it is precompatible and if 
  $(\Vv,\, \ve)\sqsubset(\Vv',\, \ve') $ where  $(\Vv',\, \ve') $ is also  precompatible and
 $\ve\le \ve'$, i.e. $\eps_J\le \eps_J'$ for all $J\in \Ii_\Kk$.
    \end{defn}
 
\begin{rmk}\label{rmk:precomp} \rm  
If $(\Vv,\ve)$ is compatible, so that it is a shrinking of the precompatible $(\Vv',\, \ve') $, then  
we may assume that each set $|W_\al|$ of the associated covering of $|\Vv|$ lifts to some subset $\TV_{IK}'$.  In other words, we can equivalently define  $(\Vv,\ve)$ to be compatible  if
  $(\Vv,\ve)\sqsubset \Vv^\infty$ is precompatible as above, for some  reduction $\Vv^\infty$ that is provided with constants $\ve^\infty\ge \ve$ such that (a$'$) and (c$'$) hold. \hfill$\er$
\end{rmk}
 
The next lemma shows that the hypothesis in Proposition~\ref{prop:col} can be satisfied.

 \begin{lemma}\label{le:Veps} Suppose given  $\psi^{-1}(\Gg_0)\sqsubset \psi^{-1}(\Ff_0)\sqsubset \Vv^\infty \sqsubset  \Vv^\Om$
such that  $\Vv^\infty$ is adapted to the covering $(|W_\al|)_{1\le \al\le N}$, where $|W_\al| \subset |\Vv^\Om|$.
  Then:
 \begin{enumerate}\item
 There is a precompatible shrinking $(\Vv,\, \ve)\sqsubset  \Vv^\infty$. 
 \item  Any precompatible $(\Vv',\, \ve')$ has a compatible shrinking $(\Vv,\, \ve)\sqsubset (\Vv',\, \ve')$.
\end{enumerate}
 \end{lemma}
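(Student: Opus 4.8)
The plan is to follow the proof of Lemma~\ref{le:Ueps}, transcribed from atlas charts to reduction domains; the single genuinely new point is that condition~(d$'$) asks the product neighborhoods of fixed width $(\ka+1)\eps_I$ to land inside the \emph{small} sets $V_H$, not merely inside $V_H^\Om$. The remedy is to fix the reduction — together with enough closure control — before the constants $\ve$ are chosen, and then take $\ve$ small. Concretely, for part~(i): starting from the fixed data $\psi^{-1}(\Gg_0)\sqsubset\psi^{-1}(\Ff_0)\sqsubset\Vv^\infty\sqsubset\Vv^\Om$ with $\Vv^\infty$ adapted to $(|W_\al|)_{1\le\al\le N}$, use the standard nesting technique of \cite{MW1} (the one used there to build reductions and compatible perturbations) to choose a reduction $\Vv^{(1)}$ with $\psi^{-1}(\Ff_0)\sqsubset\Vv^{(1)}\sqsubset\Vv^\infty$ — still adapted to $(|W_\al|)$ by Remark~\ref{rmk:precomp0}(i) — so that for all $I\subsetneq H\subset K$ and all relevant $\al$ the set $\cl\bigl(\TV^{(1)}_{IH}\cap\rho_{HK}(W_\al\cap\TV^{(1)}_{HK})\bigr)$ is contained in $V^{(1)}_H$, and moreover $\sup_{\cl V^{(1)}_I}\|s_I\|$ is small for every $I$ (this last reduction can be made in the obstruction-space direction only, which changes neither the footprints nor the closure property). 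The displayed set is compact: since $\Kk$ is tame, $\TU_{HK}=s_K^{-1}(E_H)$ is closed, so $\cl(\TV^{(1)}_{HK})\subset\cl(V^{(1)}_K)\cap\TU_{HK}$ is compact, and $\rho_{HK}$ is a finite covering map.

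Because each $\phi^E_{IH,\al}$ of \eqref{eq:phiHJ0}--\eqref{eq:phiHJ} restricts to the identity on the zero section, one compactness argument over all the finitely many triples yields $\de_0>0$ with $\phi^E_{IH,\al}\bigl(E_{H\less I,\de_0}\times(\TV^{(1)}_{IH}\cap\rho_{HK}(W_\al\cap\TV^{(1)}_{HK}))\bigr)\subset V^{(1)}_H$. Now pick $\ve$ with the ordering of~(a$'$) strengthened so that $(\ka+1)\eps_I$ is smaller than $\de_0$, than each $\eps_\al$, and than suitable fractions of the $\eps_J$ with $I\subsetneq J$, and set $V_I:=V^{(1)}_I\cap s_I^{-1}(E_{I,\eps_I/2})$. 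As in Lemma~\ref{le:Ueps}, $\Vv=(V_I)$ is a reduction (footprints unchanged, overlap condition inherited from $\Vv^{(1)}$); (b$'$) is clear and (c$'$) holds since $s_I(\cl V_I)\subset E_{I,\eps_I}$; and~(d$'$) holds because the domain in~(d$'$) for $\Vv$ lies inside that for $\Vv^{(1)}$, so the $\phi^E_{IH,\al}$-image already lies in $V^{(1)}_H$, while for a point $x'=\phi^E_{IH,\al}(e',y)$ of that image one has $\|s_H(x')\|=\max\bigl(\|e'\|,\|\mathrm{proj}_{E_I}s_H(x')\|\bigr)<\eps_H/2$, using $s_{H\less I}\circ\phi^E_{IH,\al}=\mathrm{proj}_{E_{H\less I}}$, the estimate $\|\mathrm{proj}_{E_I}s_H(y)\|=\|s_I(\rho_{IH}(y))\|\le\sup_{\cl V^{(1)}_I}\|s_I\|$, and continuity of $s_H$ near $y$ on the relevant compact set; hence $x'\in V^{(1)}_H\cap s_H^{-1}(E_{H,\eps_H/2})=V_H$. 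Finally the orientations and all remaining verifications are exactly as in Lemma~\ref{le:Ueps}. For part~(ii): given a precompatible $(\Vv',\ve')$ — so $\Vv'\sqsubset\Vv^\infty$ by~(b$'$) — re-run part~(i) with $\Vv^{(1)}$ replaced by a reduction $\sqsubset\Vv'$, still adapted to the same covering $(|W_\al|)$ by Remark~\ref{rmk:precomp0}(i). The only constraints placed on the new $\ve$ are finitely many strict upper bounds together with the separation inequalities of~(a$'$), all of which persist under replacing $\ve$ by any pointwise smaller tuple, so one may additionally impose $\eps_J\le\eps'_J$ for every $J$ (this only shrinks the $V_I$ and shortens the product neighborhoods in~(d$'$), so precompatibility survives). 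The resulting pair $(\Vv,\ve)$ is then precompatible, a shrinking of $(\Vv',\ve')$, and has $\ve\le\ve'$ — which is precisely the definition of $(\Vv,\ve)$ being compatible; and if one wants the refinement recorded in Remark~\ref{rmk:precomp} one first replaces $(|W_\al|)$ by a covering of $\ol(|\Vv'|)$ adapted to $\Vv'$ via Lemma~\ref{le:W}.

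The step I expect to be the main obstacle is exactly the uniform form of~(d$'$): arranging, before any constant is fixed, that the closures $\cl(\TV_{IH}\cap\rho_{HK}(W_\al\cap\TV_{HK}))$ lie inside $V_H$ simultaneously for all comparable triples $I\subsetneq H\subset K$ and all patches $W_\al$, so that a single collar width $\de_0$ serves them all. This forces the order of choices (first a reduction with tight closure control and small section norms, then small constants, with the usual downward-in-$|J|$ bookkeeping) and is where tameness of $\Kk$ and the adapted-cover machinery of Lemma~\ref{le:W} — together with the nesting arguments of \cite{MW1} — are genuinely used. Once it is in place, the remainder is a routine adaptation of Lemma~\ref{le:Ueps}.
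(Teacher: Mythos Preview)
There is a genuine gap at the step you flag as ``the main obstacle'': the self--referential closure condition you impose on the single reduction $\Vv^{(1)}$ cannot be arranged. You ask that
\[
\cl\bigl(\TV^{(1)}_{IH}\cap\rho_{HK}(W_\al\cap\TV^{(1)}_{HK})\bigr)\subset V^{(1)}_H,
\]
but $\TV^{(1)}_{IH}\subset V^{(1)}_H$ is the locus of points of $V^{(1)}_H$ lying over $|V^{(1)}_I|$, and this locus always runs out to the topological boundary of $V^{(1)}_H$ in the directions transverse to $\TU_{IH}$ (concretely: if $V^{(1)}_H$ is an open box and $\TV^{(1)}_{IH}$ is the slice $\{s_{H\less I}=0\}$ through it, the closure of the slice meets $\partial V^{(1)}_H$). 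Intersecting with $\rho_{HK}(W_\al\cap\TV^{(1)}_{HK})$ does not help, since when $H=K$ this factor is all of $V^{(1)}_H\cap W_\al$. Thus the compactness argument producing a uniform $\de_0$ with image in $V^{(1)}_H$ does not go through; at best you get the image in some \emph{larger} $V^\infty_H$, and then your $s_H$--estimate only lands you in $V^\infty_H\cap s_H^{-1}(E_{H,\eps_H/2})$, not in $V_H=V^{(1)}_H\cap s_H^{-1}(E_{H,\eps_H/2})$.

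The paper's proof is organized precisely to break this self--reference. It runs a genuine downward induction on $\ell=|I|$: at level $\ell$ one passes to an intermediate $\Vv'\sqsubset\Vv^{\ell+1}$ so that closures of the $\TV'$--sets land in the \emph{larger} $V^{\ell+1}_H$, obtains the $\phi$--image there by compactness, and then defines $V^\ell_J$ for $|J|>\ell$ by excising from $V^{\ell+1}_J$ only the points of $s_J^{-1}(E_I)\cap(V^{\ell+1}_J\setminus V'_J)$. The point of this surgery is that, by $s_{J\less I}\circ\phi^E_{IJ}(e,\cdot)=e$, every point in the $\phi$--image with $e\ne 0$ lies \emph{outside} $s_J^{-1}(E_I)$, so nothing in the image is removed and the inclusion into $V^\ell_H$ survives. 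Your closing remark about ``downward-in-$|J|$ bookkeeping'' shows you sense the need for induction, but this excision trick --- which is what actually converts the two--level containment into the one--level condition~(d$'$) --- is the missing idea.
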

 
 \begin{proof} The proof of (i) is somewhat similar to that of Lemmas~\ref{le:Ueps} and~\ref{le:W}, 
except that now we have to make sure that (d$'$) holds, i.e. that we can choose $\Vv$  so that
 the image of $  \phi^E_{IH,\al}$  lies in $V_H$  for all $I\subsetneq H \subset K$ rather than just in the fixed ambient space  
 $U_J^\Om$ as in \eqref{eq:phiE01}.   Claim (ii) then follows by the same argument, with  $\Vv^\infty$ replaced by  $\Vv'$.
 \MS
 
 To prove (i), we  first choose any reduction $\Vv^\ka$ of $\Uu$, where $(\Uu,\ve^\ka)$ is compatible, so that 
  $(\Vv,\,\ve)$ satisfies (a$'$),(b$'$),(c$'$).
 We then  work by
downwards induction on $\ell: = |J|$, so that after the $\ell$th stage we have chosen a reduction $(\Vv^\ell,\,\ve^\ell)$ 
with 
$$
\psi^{-1}(\Gg^0)\sqsubset \Vv^\ell\sqsubset \Vv^\ka, \quad \ve^\ell \le \ve^\ka
$$
that satisfies  (a$'$), (b$'$), (c$'$) for all $I,K$, 
and satisfies (d$'$) for all $I$ with $|I|\ge \ell$.
   Since (d$'$) is vacuous when $\ell = \ka$, it suffices to  suppose that 
we have found suitable $(\Vv^{\ell+1},\,\ve^{\ell+1})$ for some $1<\ell + 1 \le \ka$, and consider the construction of
 $(\Vv^\ell,\,\ve^\ell)$.   Our method gives $\ve^\ell$ where $\eps_J^\ell = \eps_J^{\ell+1} $ if $|J|>\ell$ and 
  $\eps_I^\ell \le \eps_I^{\ell+1} $ if $|I|\le\ell$.  Further, for $|J|>\ell$ we construct $V_J^\ell$ by removing some points in $\TV_{IJ}^{\ell+1}$ from $V_{J}^{\ell+1}$ for $|I|=\ell$. Note that removing these points does not affect the validity of (d$'$) for pairs  $I\subsetneq K$ with $|I|\ge \ell+1$.

Choose an intermediate  reduction $\Vv'$ such that $\Vv^0\sqsubset \Vv'\sqsubset \Vv^{\ell+1}$.
Because the subsets $\pi_\Kk(V_I^\infty)\subset |\Kk|$  with $|I| = \ell$ are disjoint, we may work separately with each such $I$.  
Given  $x\in V_I$ with $I \subsetneq K =  I_{\max}(|x|)$
the set 
$\TV_{IK}' = V_K'\cap \pi_\Kk^{-1}(\pi_\Kk(V_I'))$ is precompact in $\TV_{IK}^{\ell+1}=V_K^{\ell+1}\cap \pi_\Kk^{-1}(\pi_\Kk(V_I^{\ell+1}))$ and 
hence there is
 $0<\eps_I^\ell\le \eps_I^{\ell+1}$ so that for each $\al$ with $W_\al\subset V^\Om_I$ and each $I\subsetneq H \subset K$ we have
\begin{align}\label{eq:Wa}
  \phi^E_{IJ}\Bigl(E_{H\less I,(\ka+1)\eps_I}\times ( \TV_{IH}'\cap  \rho_{IH}^{-1}(W_\al)) \Bigr) \subset V^{\ell+1}_H.
\end{align}
For $J$ with $|J|>\ell$ we now define
$$
V_J^\ell : = V_J^{\ell + 1} \less {\textstyle  \bigcup_{I\subset J, |I|=\ell} \bigl(s_J^{-1}(E_I)\cap (V_J^{\ell+1}\less V_J')\bigr)} .
 $$
 Then $V_J^\ell$ is an open subset of $V_J^{\ell + 1} $, since we have removed a closed subset.
 Now choose $\eps_J^\ell$ for $|J|<\ell$ so as to satisfy (a$'$) and then define
  $$
 V_J^\ell: = \bigl\{x\in V_J^{\ell+1} \ | \ s_H(x) < \tfrac 12\eps_J^\ell\bigr\},\quad |J|\le \ell.
 $$
 Then  (c$'$) holds, and (b$'$) still holds for $J$ with $|J|>\ell$ because  it holds for $\Vv'$, and it holds when 
 $|J|\le\ell$ because 
 we did not change the zero sets $s_J^{-1}(0)$.   Moreover (d$'$) holds 
 because when $|J|>\ell$ the only points in $V_J^{\ell+1}$ that were removed to form $V_J^\ell$ 
 lie in $s_J^{-1}(E_I)$ for $I=\ell$.  But this   does not affect the validity of  \eqref{eq:Wa} (and hence \eqref{eq:phiIHH}) because 
 $$
   \phi^E_{IJ}\bigl((E_{J\less I,(\ka+1)\eps_I}\less \{0\})\times \{z\}\bigr) \cap s_J^{-1}(E_I) = \emptyset
   $$
   by the first equation in \eqref{eq:dom1}.
This completes the proof. \end{proof}

\subsection{Construction of the boundary collar}\label{ss:col}
 
It remains to  establish the existence of a collar with the properties stated in Proposition~\ref{prop:col}.
 Recall from \eqref{eq:collDe} that $\De_J$ has a collar of the following form\footnote
 {
 \, Here for the sake of clarity we write $t^\p$ for the coordinate of a general point in $\p \De_J$, while $t$ could be any point in $\De_J$.}
 \begin{align}\label{eq:tde}
 c_{J}^\De: \p \De_J\times [0,\de] \to \De_J,\quad (t^\p,r)\mapsto (1- r|J|)\,t^\p + r |J|\,b_J,
\end{align}
 where $b_J$ is the barycenter of $\De_J$ and $0<\de < \frac 14$: see Figure~\ref{fig:3}.   It is convenient to write
\begin{align*}%\label{eq:nbhdcol}
& \Nn_\de^\De(\p_{J\less I}\De): = 
 \{t\in \De_J \ \big| \  t_j< \de, \  \forall j\in J\less I\}. 
\end{align*}
Notice that 
\begin{align}\label{eq:nbhdcol2}
 c_{J}^\De\Bigl( (\p\De \cap \Nn_\de^\De(\p_{J\less I}\De))\times [0,\de)\Bigr) \subset \Nn_{2\de}(\p_{J\less I}\De);
 \end{align}
i.e.  the width-$\de$ collar of  the corner $\p\De \cap \Nn_\de^\De(\p_{J\less I}\De)$ lies in $\Nn_{2\de}^\De(\p_{J\less I}\De)$.
 We now show that for each $J$ this collar lifts to a (partial) collar for $\p Y_{\Vv, J\, \ve}$ with the properties stated in 
 Proposition~\ref{prop:col}.

 \begin{lemma}\label{le:col} Suppose that $(\Vv,\,\ve)$ is a compatible reduction.  Then for each $J\in \Ii_\Kk$ there is a constant 
 $w_J> 0$, subset $\p'\, Y_{\Vv,J,\,\ve}\subset \p Y_{\Vv, J,\,\ve}$ and map  $c^Y_J$  as in \eqref{eq:coll1}
 with the  properties detailed in  Proposition~\ref{prop:col}.
%\eqref{eq:coll2}, \eqref{eq:collH}, \eqref{eq:coll4}, and \eqref{eq:rescal}.
\end{lemma}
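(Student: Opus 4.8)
The plan is to carry out the three-step program indicated in Remark~\ref{rmk:out}, using the equivariant submersion axiom \eqref{eq:subm0} as the only geometric input and the bookkeeping built into the notion of a compatible reduction (Definition~\ref{def:compatV}) to make the pieces fit together.

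\textbf{Step 1 (Local collars from the submersion axiom).} Fix $J\in\Ii_\Kk$ and $I\subsetneq J$. For a point $x_0\in\TV_{IJ}$, pick — using Definition~\ref{def:compatV}(d$'$) — one of the $\Ga$-equivariant local product maps $\phi^E_{IJ}\colon E_{J\less I,(\ka+1)\eps_I}\times\TOo_{IJ}\to V_J$ with $x_0\in\TOo_{IJ}$, $s_{J\less I}\circ\phi^E_{IJ}(v,y)=v$ and $\phi^E_{IJ}(0,y)=y$. On the part of $\p'\,Y_{\Vv,J,\ve}$ lying over $\TOo_{IJ}$ in $V_J$ and over a neighbourhood of $b_I$ in $\ov\st_J^\De(|x|)$, I would define the \emph{local} collar by moving the $\De_J$-coordinate along the simplex collar $c^\De_J$ of \eqref{eq:collDe} and the $V_J$-coordinate in the $E_{J\less I}$-direction: $(e,x;t)\mapsto(e',\phi^E_{IJ}(r\cdot e_{J\less I},x);c^\De_J(t,r))$, where $e'$ is the unique element of $E_A$ that agrees with $e$ outside $E_I$ and makes the triple lie in $Y_{\Vv,J,\ve}$; this is legitimate because $c^\De_J(t,r)$ has all $J$-components positive for $r>0$, and since $s_{J\less I}\circ\phi^E_{IJ}(v,y)=v$ it preserves the $E_{A\less I}$-components, so \eqref{eq:coll20} and \eqref{eq:coll2} hold automatically, the map is the identity at $r=0$, it is $\Ga_A$-equivariant, and — because $\phi^E_{IJ}$ descends through $\rho_{HJ}$ for $I\subsetneq H\subsetneq J$ by \eqref{eq:phiHJ} — it is compatible with the covering maps $(\rho_{HJ})_*$ as in \eqref{eq:collH} and \eqref{eq:coll4}. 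The stratum $\TV_{IJ}$ with its chosen product structure is the ``locally flat corner'' picture of Remark~\ref{rmk:sm2}, and condition (d$'$) of Definition~\ref{def:compatV} is exactly what guarantees that along a nested corner $\TV_{I_0J}\supset\cdots\supset\TV_{HJ}$ the iterated product structures can be chosen literally compatibly, with images inside the shrunk sets $V_H$.

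\textbf{Step 2 (Globalising by Hatcher's patching).} The sets $\TOo_{IJ}$, together with the barycentric sets $\ov\st_J^\De(|x|)$, form a finite cover — indexed essentially by the cover $(|W_\al|)_{1\le\al\le N}$ of Lemma~\ref{le:W} — of the set $\p'\,Y_{\Vv,J,\ve}$ of points $(e,x;t)$ with $x\in\p V_J=\bigcup_{H\subsetneq J}\TV_{HJ}$ and $t\in\ov\st_J^\De(|x|)$. I would then invoke the standard patching argument for topological collars (Brown's collar theorem; see Hatcher~\cite{Hat}): after ordering the cover and choosing cut-off functions \emph{pulled back from $|\Kk|$} (hence $\Ga$-invariant and locally constant along the fibres of the $\rho_{HJ}$), one amalgamates the local collars of Step~1 into a single $\Ga_A$-equivariant embedding $c^Y_J\colon\p'\,Y_{\Vv,J,\ve}\times[0,w_J)\to Y_{\Vv,J,\ve}$ of a uniform width $w_J>0$. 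Since every local collar covers the \emph{same} simplex collar $c^\De_J$ in the $\De_J$-factor and leaves the $E_{A\less J}$-components untouched, this amalgamation can be performed fibrewise over the $\De_J$- and $E_{A\less J}$-directions, so properties \eqref{eq:coll20}, \eqref{eq:coll2}, the covering-map compatibility \eqref{eq:collH} and \eqref{eq:coll4}, and $\Ga_A$-equivariance all survive the patching; the description of $\p'\,Y_{\Vv,J,\ve}$ in the first bullet of Proposition~\ref{prop:col} holds by construction.

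\textbf{Step 3 (Rescaling invariance and shrinkings; the main obstacle).} Compatibility with shrinkings is cheap: each $\phi^E_{IJ}$ is the restriction of a product map defined on an ambient set $\TW_{IK,z_\al}$ strictly larger than the reduction, so $c^Y_J$ extends over any finer compatible $(\Vv',\ve')\sqsubset(\Vv,\ve)$ and one only decreases $w_J$, giving the last bullet. The delicate point — and what I expect to be the real obstacle — is the invariance under rescaling \eqref{eq:rescal} \emph{together with} the simultaneous validity of the covering-map relations. One has to organise the local collars so that, after projecting away the $E_H$-components, $\pr_{E_{A\less H}\times V}\circ c^Y_J$ depends only on data a rescaling $\mu_H$ supported on $E_H$ does not move — the $E_{A\less H}$-part of $e$, the point $|x|\in|\Kk|$, and the collar length $r$ — so that the $\mu_H$-action, which merely reweights the barycentric coordinate $t$ inside $\ov\st_J^\De(|x|)$ and inversely rescales $e|_{E_H}$, leaves those projections fixed. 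This forces the local collars near the various barycenters $b_{I_k}$ (with $I_k$ in the chain of $|x|$) to be chosen coherently, using at each barycenter the product structure transverse to the corresponding stratum; and it is precisely the ``compatible reduction'' hypothesis — adaptedness of $\Vv$ to a cover $(|W_\al|)$ (Lemma~\ref{le:W}) plus condition (d$'$) of Definition~\ref{def:compatV} — that makes the relevant iterated product structures available and lands them in the correct $V_H$. Once the local collars carry these coherence properties, the fibrewise patching of Step~2 preserves them and all assertions of Proposition~\ref{prop:col} follow; the existence of compatible reductions and of the nested chain of compatible shrinkings used above is supplied by Lemma~\ref{le:Veps} and Corollary~\ref{cor:col}.
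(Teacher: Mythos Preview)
Your three-step outline matches the paper's approach exactly, and Steps~1 and~2 are essentially correct (the paper's local collar is precisely your formula, organized via the boundary chart $\psi$ of \eqref{eq:phiYI} and the coordinates $r_{K\less I}$ related to the single collar variable $r$ through \eqref{eq:coll5}; and Step~2 is Hatcher's patching with partition of unity pulled back from $|\Kk|$).

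The genuine gap is in Step~3. You correctly locate the obstacle --- simultaneous rescaling invariance \eqref{eq:rescal} and covering compatibility \eqref{eq:collH}, \eqref{eq:coll4} --- but ``choose coherently'' and ``fibrewise patching preserves them'' is not a mechanism. The covering compatibility is a relation between \emph{different} global collars $c^Y_H$ and $c^Y_J$ for $H\subsetneq J$: the image of $c^Y_H$ must lie in the domain $\p'\,Y_{\Vv,J,\ve}$ of $c^Y_J$, and $c^Y_H$ must lift through $(\rho_{HJ})_*$. If you patch $c^Y_H$ and $c^Y_J$ using \emph{independent} partitions of unity (even ones pulled back from $|\Kk|$), there is no reason these relations survive. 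The paper's device is to \emph{order} the cover $(|W_\al|)$ so that the cardinality $|I_\al|$ of the minimal index is nondecreasing, set $N_k=\#\{\al:|I_\al|\le k\}$, and choose a \emph{nested} family of partitions of unity $(\la^k_\al)_{\al\le N_k}$ satisfying the consistency condition $\la^k_\al=\la^{k-1}_\al$ for $\al\le N_{k-1}$ (together with a shrinking \eqref{eq:oo} ensuring that sets with $\al>N_k$ avoid the level-$k$ overlaps). One then patches on the level-$k$ boundary $\p^k Y_{\Vv,J,\ve}$ using $(\la^k_\al)$; the consistency condition forces $c^Y_{J,k}$ to agree with $c^Y_{J,k-1}$ on their common domain, and this is what propagates the covering relations from the local collars to the global ones. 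Without this nested-partition scheme (or an equivalent device), the claim that patching preserves \eqref{eq:collH} is unjustified.

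A smaller point in Step~1: rescaling invariance of the \emph{local} collar is not automatic from your formula; the paper verifies it via the commuting diagrams \eqref{eq:rescal1}, \eqref{eq:rescal2} and then uses it to \emph{extend} the domain of each local collar from a neighbourhood of $b_I$ to all of $\ov\st(\Oo^\infty_{K,\al})$. You should do this extension before patching, since the sets $\ov\st^\De_J(|x|)$ appearing in the description of $\p'\,Y_{\Vv,J,\ve}$ are not small neighbourhoods of barycenters.
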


\begin{proof} 

The proof has three steps.  
\MS

\NI {\bf Step I:}  {\it Construction of local collars.}
As in Remark~\ref{rmk:precomp} we will assume that $(\Vv,\ve)\sqsubset (\Vv^\infty,\ve^\infty)$ is precompatible, where  each set  $|W_\al|$ lifts to some $\TV_{IK}^\infty$.  %When needed for clarity, we denote these $I,K$ by $I_\al, K_\al$.
In this step, we fix $\al, I=I_\al,$ and $K=K_\al$, and
 define a local collar of width $w_\al$ over a subset $\Oo^\infty_{K,\al}$ of $\p Y_{\Vv^\infty, K, \ve^\infty}$.
 This subset  is determined by  the set $W_\al\subset \TV_{IK}^\infty$, and is the inverse image of an open subset $|\Oo^\infty_{K,\al}|$ of the set of overlaps 
 $\ol(|\Vv^\infty|)$ in \eqref{eq:ol}.

To this end,  consider  the coordinate chart for $Y_{\Vv^\infty,K,\,\ve^\infty}$ given much as in \eqref{eq:phiYI} by
\begin{align}\label{eq:coll00}
&\psi: 
\; E_{A\less I, (\ka+1)\eps_I} \times W_\al  %\TOo_{IK}^V\times  \Oo^\De_I
 \times [0,\de_\al]^{|K\less I|} \; \longrightarrow\;  Y_{\Vv^\infty,K,\,\ve^\infty},\\ \notag
&\; \bigl( e_{A\less I} , x,  r_{K\less I}\bigr)\longmapsto \bigl(e_{A\less I}+ (\la b_I)^{-1} \cdot s_I(x'),\ x' ;\ \la b_I + r_{K\less I}\bigr),\;\; \mbox{ where }\\ \notag
&\;  \hspace{.5in}  x'= \phi_{IK,z_\al}^E(r_{K\less I}\cdot e_{K\less I},x), \;\; \la: = 1-|r_{K\less I}| =: 1-{\textstyle \sum_{j\in K\less I} r_j}. 
\end{align}
%where $ \TOo'_{IK}\subset \{(e,x)\in E_{I} \times \TV_{IK} \ \big| \ s_I(x) = b_I^{-1}\cdot e_I\}$ is open.
For each $x \in W_\al: = W_\al\cap \TV_{IK}^\infty$, restrict to those $r^\p_{K\less I}$ such that  $$
\la^\p b_I + r^\p_{K\less I}\in \ov\st^\De_K(|x|) \subset \p \De_K,
$$ 
where the superscript $\p$  indicates that the corresponding point lies  in the boundary.
The above map provides coordinates 
\begin{align}\label{eq:Cde}
\Cc^\de: %\bigl((e_{A\less I}, y), r^\p_{K\less I}\bigr) \mapsto \psi
\bigl(e_{A\less I}, x, r^\p_{K\less I}\bigr) \mapsto \psi\bigl(e_{A\less I}, x, r^\p_{K\less I}\bigr) = (e_{A\less I} + e_I'', x''; t^\p)
\end{align}
for an open subset 
\begin{align}\label{eq:OKal}
\Oo_{K,\al}^\infty \subset \bigl\{(e,x;t^\p):  t^\p\in \ov\st^\De_K(|x|),\  t^\p\approx 0\bigr\} % \Bigl(\subset \p Y_{\Vv^\infty,K,\,\ve}\Bigr),
\end{align}
of the boundary $\p Y_{\Vv^\infty,K,\,\ve}$.  We will assume, as we may, that 
$\Oo_{K,\al}^\infty = \pr_V^{-1}(|\Oo_{K,\al}^\infty| )$, where $|\Oo_{K,\al}^\infty| $  is open in 
%which we assume to be the inverse image of an open subset $|\Oo_{K,\al}^\infty| $ of 
$\ol(|\Vv^\infty|)\subset |\Vv|$.
%\vspace{-.2in}

\begin{figure}[htbp] %  figure placement: here, top, bottom, or page
   \centering
   \includegraphics[width=3in]{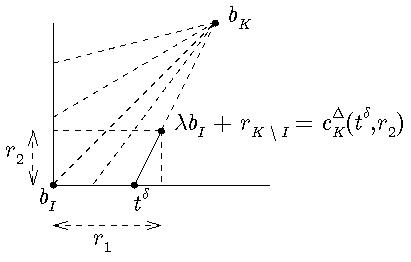} 
   \vspace{-.15in}
   \caption{Here  $K = I\cup\{1,2\}$ and $t^\de$ lies on the boundary with $t_2=0$. Hence $r_{K\less I} = (r_1,r_2)$ where
$r_2$ is the collar coordinate along the ray from $t^\de$ to $b_K$, while $t^\de = c^\De_K(b_I, r)$ for $r = (t^\de)_1$.
   }
   \label{fig:3}
\end{figure}
We  now define a collar over $\Oo_{K,\al}^\infty$  of width $w_\al < \frac 12\de_\al$ (see \eqref{eq:nbhdcol2}) as follows.
Given $$
(t^\p,r)\in \ov\st^\De_K(|x|)\times [0,\de),\; \mbox{ where } \; (t^\p,r)\approx (b_I, 0),
$$
  choose $r_{K\less I}^\p,r_{K\less I}$ (both $\approx 0$) so that
%, with $\la: = 1-|r_{K-1}|,\; \la': = 1-|r_{K-1}'|$ we have
\begin{align}\label{eq:coll5}
t^\p: = \la^\p b_I +  r_{K\less I}^\p,\quad c^\De_K(t^\p, r) =  \la\, b_I + r_{K\less I}, \;\;
\end{align} 
where 
$ \la^\p: = 1-|r^\p_{K-1}|,\; \la: = 1-|r_{K-1}|$; see Figure~\ref{fig:3}.
Then, with $\Cc^\de$ as in \eqref{eq:Cde},  define
\begin{align}\label{eq:coll4$}
&c^{Y}_{K,\al}\; :  \;\; \Oo^\infty_{K,\al} \times [0,w_\al) 
\to  Y_{\Vv^\infty,K,\,\ve},\\ \notag
&
 \bigl( (e_{A\less I} + e_I'', x''; t^\p), r\bigr)\stackrel{(\Cc^\de)^{-1}\times \id}\mapsto \bigl(\, (e_{A\less I}, x, r_{K\less I}^\p),\, r\bigr)
 %= \bigl( e_{A\less I}, \io_{EV}(0,x),\, r^\de_{K\less I}\bigr), \, r\bigr)
% \\
% & \qquad 
\; %\stackrel{\psi}
\mapsto \;  \psi \bigl(e_{A\less I}, x , r_{K\less I}\bigr),
\end{align}
where %$y = \io_{EV}(0,x)$  and 
$r_{K\less I} \in [0,\de)^{K\less I}$ is the function of $r_{K\less I}^\de$ and $\de$  defined in \eqref{eq:coll5}.
In particular, if $|K\less I| = 1$ then $r_{K\less I}$ has only one component, and so is the same as the collar variable $r$, while $t^\de = b_I$.  
Therefore the collar is simply given by $\psi$:
\begin{align}\label{eq:coll7}
&c^{Y}_{I\cup \{j\},\al}\; :  \;\; \Oo^\infty_{I\cup \{j\},\al} \times [0,w_\al) 
\to  Y_{\Vv^\infty,I\cup \{j\},\,\ve},\\ \notag
&
 \bigl( (e_{A\less I} + e_I, x; b_I), r\bigr)
 %= \bigl( e_{A\less I}, \io_{EV}(0,x),\, r^\de_{K\less I}\bigr), \, r\bigr)
% \\
% & \qquad %\stackrel{\psi}
\mapsto \;  \psi \bigl(e_{A\less I}, x , r\bigr).
\end{align}

The next task is to extend the domain of  this collar 
to
\begin{align}\label{eq:stOoK}
\ov\st\bigl(\Oo^\infty_{K,\al}\bigr): = \bigl\{(\mu_H\cdot  (e,x;t) \ |  (e,x;t)\in \Oo_\al,\;  \mu_H\cdot t \in \ov\st^\De_K(|x|)\bigr\}
\end{align}
by rescaling as follows.
%Suppose that the tuples $(t,r)$ and $(1-|r_{K\less I}|) b_I + r_{K\less I}$ are related as in \eqref{eq:coll5}.
Consider a tuple $\mu_H$ (as in \eqref{eq:rescal}),where $I\subset H\subsetneq K$,  and point $t^\de\in  \ov\st^\De_K(|x|)\cap (\{b_I\}\times [0,\de]^{|K\less I|})$
such that $$
\mu_H\cdot t^\p\in \ov\st^\De_K(|x|) \cap  (\{b_I\}\times [0,\de]^{|K\less I|}),
$$
 and let $\mu'_H\cdot $ with $(\mu_H')_i = 1$ for $i\notin H$ give the corresponding rescaling 
in the coordinates $\De_I\times [0,\de_\al]^{|K\less I|}$.   Thus if 
$c^\De(t^\p,r) = (1-|r_{K\less I}|)\, b_I + r_{K\less I}$ 
%we assume that 
%$(t,r)$ and $(1-|r_{K\less I}|)\, b_I + r_{K\less I}$ are related 
as in \eqref{eq:coll5}, we have
\begin{align}\label{eq:rescal5}
c^\De_K(\mu_H\cdot t^\p, r) =  \mu_H'\cdot(\la\, b_I + r_{K\less I}).
\end{align}
Note that this rescaling in the boundary $\p_{K\less H}\De_K$ does not affect the collar variable $r$ along this part of the boundary.
Then the
following diagram commutes, where we write
 $e_I' = (t_I)^{-1}\cdot s_I(x')$, $y: = (e_I, x;\, t_I)\in \p Y$:
\begin{align}\label{eq:rescal1}
\xymatrix
{
%\psi_{I,K,y} \bigl( \la^{-1}(e), x;  \la' t_I +  \la t_{H\less I} ),  t_{K\less H}\bigr)
\bigl( e_{A\less I}, y, r_{K\less I}\bigr)  \ar@{|->}[d]_{\mu_H'\cdot } \ar@{|->}[r]^{\psi\qquad\qquad\qquad } &
 \bigl( e_{A\less I} + e_I',\, x'= \phi(r_{K\less I}\cdot e_{K\less I}, x);\,\la b_I+ r_{K\less I}\bigr) \ar@{|->}[d]_{\mu_H'\cdot }\\
 \bigl((\mu_H')^{-1}\cdot e_{A\less I}, y,  \mu_H'\cdot r_{K\less I}\bigr)  \ar@{|->}[r]^{\psi\qquad\qquad} & \bigl( (\mu_H')^{-1}\cdot (e_{A\less I} + e_I'),\, x' \; ;\, \mu_H'\cdot(\la b_I+   r_{K\less I})\bigr),
}
\end{align}
because the rescaling on the left does not affect the image point $x'= \phi(r_{K\less I}\cdot e_{K\less I}, x)\in V_K$ on the right.
Therefore, because $c^Y_{K,\al}$ is a composite of $\psi^{-1}$ (at $r=0$) with $\psi$, and because rescaling does not affect the collar variable $r$,    the following diagram commutes:
\begin{align}\label{eq:rescal2}
\xymatrix
{
\bigl( ( e_{A\less I}+e_I'', x'';\, t^\p), r \bigr)  \ar@{|->}[d]_{{\mu_H\cdot }} \ar@{|->}[r]^{\;\;\;\;c^Y_{K,\al}} & 
(e', x'; t') \ar@{|->}[d]_{\mu_H\cdot}\\
\bigl( ( \mu_H^{-1}\cdot (e_{A\less I}+ e_I''), x'';\, \mu_H\cdot t^\p), r\bigr)\ar@{|->}[r]^{\qquad\; c^Y_{K,\al}} & 
%\psi\bigr(\mu^{-1}\cdot e_{A\less I}, (e_I,x; t_I), \, \mu\cdot t_{K\less I}\bigr) =
 \bigr( (\mu_H)^{-1}\cdot e', x',  \mu_H\cdot t'\bigr).
}
\end{align}
In other words, if we apply the collar and then rescale (a little) by $\mu_H$, we get the same result as  rescaling by $\mu_H$ and then applying the collar.
It follows that we can unambiguously extend the domain of the local collar to $\ov\st(\Oo^\infty_{K,\al})$ by 
defining 
$$
c^Y_{K,\al}\bigl( ( e_{A\less I}+ e_I'', x'';\, t), r \bigr) : = \mu_H^{-1}\cdot c^Y_{K,\al} \bigr( \mu_H\cdot (e', x',  t')\bigr),
$$
where $\mu_H$ is chosen so that $\mu_H\cdot (e', x', t')$ lies in the domain of the map in \eqref{eq:coll4$}.
Note that $c^Y_{K,\al}$ is equivariant because the maps in 
 \eqref{eq:phiHJ0}  and \eqref{eq:coll00} used to construct it are equivariant.
 \MS
 
Although we assumed in the above construction that $K$ was maximal, so that $W_\al\subset V_{IK}^\infty$ this condition was not used in any essential way in the above construction.
Thus for any $J$ such that $I\subsetneq J \subset K$, by using the map in \eqref{eq:phiHJ} instead of  \eqref{eq:phiHJ0} 
we can define a collar
 $c^Y_{J,\al}$  over 
\begin{align}\label{eq:colY}
c^Y_{J,\al}:\, &  \ov\st(\Oo^\infty_{J,\al})\times [0,w_\al) \to Y_{\Vv^\infty, J,\,\ve^\infty} \;\;\mbox{ where }
\\ \notag
\ov\st(\Oo^\infty_{J,\al}): & = \bigl\{ (e,\rho_{JK}(x),t^\p)\in \p Y_{\Vv^\infty, J, \ve^\infty} \ \big| \ x\in \TV_{JK}\cap W_\al,  (e,x;t^\p)\in \ov\st(\Oo^\infty_{K,\al})\bigr\},
\end{align}
and  $\ov\st(\Oo^\infty_{K,\al})$ is defined in \eqref{eq:stOoK}.
 
 Further we can restrict these collars to the corresponding  subsets $\ov\st(\Oo_{J,\al})$ 
 of $\p Y_{\Vv, J, \,\ve}$ for  all $I\subsetneq J \subset K$, obtaining a set of locally defined collars of width $w_\al$.
 Note that 
 this collar still has width $w_\al$ because we used the constant $\eps_I$ in \eqref{eq:coll00} rather than $\eps_I^\infty$. Hence
although $\ve < \ve^\infty$ in general, when we restrict the domain of $\phi$ in \eqref{eq:coll00} to 
 the points in $\p Y_{\Vv, K,\, \ve}$ 
 the image of $\phi$ lies in $Y_{\Vv, K,\, \ve}$ by condition (d$'$) in Definition~\ref{def:compatV}.

 We claim that  these collars satisfy all the conditions in Proposition~\ref{prop:col}.
In particular,  if $I\subsetneq H\subsetneq K$ the domain of $c^Y_{K,\al} $ contains the image of the collar $c^Y_{H,\al} $ by 
 \eqref{eq:OKal}.  They are compatible with projections and invariant under rescaling   by construction.

The domains  $\ov\st(\Oo_{J,\al})$ of these collars are not open in  $\p Y_{\Vv, J, \,\ve}$ because of the restriction 
  $t\in \ov\st^\De_J(|x|)$, and because the condition that $(e,x;t^\p)\in \ov\st(\Oo_{K,\al})$  places certain extra (but unimportant) restrictions
  on $\|\pr_{E_{K\less I}} e\|$ when $t^\p$ has been rescaled far from $b_I$.
 However, modulo these provisos, for each such $J$ they consist of the full inverse image in $\p Y_{\Vv, J, \,\ve}$
 of the  following open subset $|\Oo_\al|: = |\Oo_{K,\al}^\infty|$  of the \lq boundary' $\p |V^\infty_K|$ of $|V^\infty_K|$:
\begin{align}\label{eq:NNW}
|\Oo_\al|: = |\Oo_{K,\al}^\infty| \;\subset \; \p |V^\infty_K| : = {\textstyle \bigcup_{H\subsetneq K} }
 |V^\infty_{HK}| \subset \ol(|\Vv^\infty|),
%&  |\Nn(W_\al)|  = \bigl\{ |x| \ \big| \ \exists (e,x;t)\in \im \phi, \mbox{ where $\phi$ is as in \eqref{eq:coll00}}\bigr\}.
\end{align}
 where $\Oo_{K,\al}^\infty$ is defined in \eqref{eq:OKal}.\MS

\NI {\bf Step 2:}  {\it Construction of a global collar from a covering by local collars}
 
 We now explain a method from \cite[Prop.~3.42]{Hat} that 
combines  local collars  $$
\bigl(c_\al: \Uu_\al\times [0,w_\al) \to Y\bigr)_{1\le \al \le N}\vspace{.07in}
$$
 defined over 
open subsets $\Uu_\al\subset \p Y$ of the boundary of a manifold $Y$ into a global collar over 
$\p'\, Y$ of width $w$, where $\p'\, Y$  is any precompact subset of $ \bigcup_\al \Uu_\al$ and  $w<\min_\al w_\al/2$.  

To this end, choose a partition of unity $(\la_\al)_\al$ subordinate to the covering of $\p'\, Y$ by the sets $(U_\al)_\al$,
and define
 $$
Y': = Y \cup_\theta \bigl(\p'\, Y \times [-w, 0]\bigr),  
$$
where $\theta$ identifies  $\p'\, Y \times\{0\}$ with $\p' \, Y$ in the obvious way.
We claim that there is a homeomorphism
$$
\Psi: \bigl(Y',  \p'\, Y \times [-w, 0]\bigr) \longrightarrow \Bigl(Y, \; {\textstyle  \bigcup_{\al} } c_\al\bigl(\Uu_\al\times [0,2w)\bigr)\Bigr).
$$
Granted this, we  define the collar by
\begin{align*}%\label{eq:cY}
c^Y:  \p'\, Y \times [0, w) \longrightarrow Y,\quad \bigl(y,r\bigr)\mapsto \Psi_J\bigl(\,y,\, r-w\bigr).
\end{align*}

The homeomorphism  $\Psi$ is a composite
$$
\Psi = \Psi_{N}\circ \cdots \circ \Psi_{1},
$$
of homeomorphisms, 
$$
\Psi_{\ell }: Y'(-1+ \sum_{\al<\ell } \la_\al) \to Y'(-1+ \sum_{\al\le \ell } \la_\al),
$$
where %$(\rho^J_\al)$ is the partition of unity described above and, 
for any function $\si: \p'\, Y  \to [0,1]$
we define
$$
Y'(-1+\si): = Y \cup_\theta \big\{ (y,r)\ \big| \ y\in \p'\, Y , \  (-1+ \si (y))w \le r\le 0\bigr\}.
$$
To define $\Psi_\ell $, first extend the product structure of the external collar $\p Y \times [-w,0] $ via the local collar $c_\ell $ 
to 
%$ c_\al(\Uu_\al \times [0,2w))\subset Y$. %which by \eqref{eq:coll4$}  has a product structure given by
%%via the  local collar of width $w_\al > 2w$ defined in \eqref{eq:colY}.  
%Then  define $\Psi_{J,\ell }$ to have support in this 
obtain an extended collar neighborhood 
$$
\Hat c_{\ell }:  \Uu_\ell  \times [-w, w_\ell ) \to  Y'.
$$
Then define 
 \begin{align*}%\label{eq:collbe2}
\Psi_{\ell }( \Hat c_{\ell }(y,r)) & = \Hat c_{\ell }(y,  f_{y,\ell }(r))
 \end{align*}
 where 
 $$
{\textstyle  f_{y,\ell }:  \bigl[(-1 + \sum_{\al<\ell } \la_\al(y)) w,\ 2w\bigr]\to \bigl[(-1 + \sum_{\al\le \ell } \la_\al(y))w, 2w\bigr]}
 $$
is a homeomorphism that translates by $\la_\ell (y) $ if  $r\le \sum_{\al<\ell } \la_\al(y)) w$.
 This completes the construction.
 
 \begin{rmk}\rm \label{rmk:colllift}  Notice that if each local collar $c_\al$ lifts a map $\pr_\De: (Y,\p Y)\to \bigl([0,1),\{0\}\bigr)$,
 then the global collar does as well; i.e. we have
 $$
 \pr_\De\circ c(y,r) = r.
 $$
 This holds because each $f_{y,\ell}$ is a translation by $\la_\ell(y) w$ on the relevant part of its domain, where $\sum_\ell \la_\ell(y) = 1$. 
Further, if for some map $\pr_E:Y\to E$  we have $c_\al(y,r) = \pr_E(y)$, then the global collar also satisfies $c^Y(y,r) = \pr_E(y)$.
 \hfill$\er$
 \end{rmk}

 \NI {\bf Step 3:} {\it  Completion of the proof.}
 
 Once the cover and partition of unity are chosen, the construction in Step 2 depends only on the ordering of the sets in the cover.
Even though we saw in Step 1 that the local covers satisfy all the compatibility conditions required in  Proposition~\ref{prop:col}, we  will have to 
organize the construction  rather carefully in order to achieve this for the global collars.

 Recall from the discussion of \eqref{eq:piK} that because the atlas $\Kk$ is assumed tame and preshrunk and hence good,
 the subspace topology on $|\Vv^\infty|$ (considered as a subset of $|\Kk|$) is metrizable, and so we may  fix a metric on $|\Vv^\infty|$.
 Since the sets $|V_I|, |V_J|$ have disjoint closures unless $I\subset J$ or $J\subset I$, we may choose  
 \begin{align}\label{eq:de0}
\de_0>0 \mbox{ smaller than half the distance between any  two such sets.}
\end{align}
We next order the sets $|W_\al|_{1\le \al \le N}$ of the cover of $\ol(\Vv)$ so that as $\al$ increases the cardinality $|I_\al|$ of the minimal set 
$I$ in Lemma~\ref{le:W}~(i) increases.    Thus we assume that there are numbers $0 = n_0\le n_1\le  n_2 \le \dots \le  n_{\ka-1} = N$ 
so that
$$
N_{k-1} < \al \le N_k \Longrightarrow |I_\al|= k.
$$
By \eqref{eq:NNW}  the sets $\bigl(|\Oo_\al|\bigr)_{1\le\al \le N}$ cover a neighborhood of the compact subset $\ol(|\Vv|)$ in $|\Vv^\infty|$.   Further  by condition (ii) in Lemma~\ref{le:W} and our choice of $N_k$,  if $\al> N_k$ the set
$|\Oo_\al|$ does not meet any $|V_I|$ with $|I|\le k$.
Hence  we may choose $\de_0>\de_1>0$ so that
for each $k$, 
the sets $\bigl(|\Oo_\al|\bigr)_{1\le\al \le N_k}$ cover the closed  $\de_1$-neighborhood  
$$
\ov{\Nn}\!_{\de_1}(k): = \ov {\Nn}\!_{\de_1}\bigl({\textstyle \bigcup_{|I|\le k, L\in \Ii_\Kk}}  |\ov {V_{IL}}|\bigr) \subset \ol(|\Vv|)
$$
of the compact subset 
${\textstyle \bigcup_{|I|\le k, L\in \Ii_\Kk}} |\ov {V_{IK}}|$.
By shrinking the sets $\Oo_\al$ to $\Oo_\al'$,  we may then assume in addition that  for some $0<\de_2<\de_1$ we have
\begin{align}\label{eq:oo}
\bigl(\al> N_k\bigr)\, \Longrightarrow  |\Oo_\al'| \cap \ov{\Nn}\!_{\de_2}(k)
%\Bigl(\ov{\Nn}_\de\bigl({\textstyle \bigcup_{|I|\le k}} \ov{|V_{IK}|}\bigr)\Bigr) 
= \emptyset,\quad \forall k.
\end{align}

For each $k\le \ka$, choose a partition of unity  $(\la^k_\al)_{1\le \al \le N_k}$ for 
$
\ov{\Nn}\!_{\de_2}(k)
$
 with respect to the covering 
by $(|\Oo_\al'|)_{1\le \al \le N_k}$, such that 
%where, for each $k$,
%$(\la^k_\al)_{1\le \al \le N_k}$ extends $(\la^{k-1}_\al)_{1\le \al \le N_{k-1}}$ in the sense that:
\begin{align}\label{eq:rhokk}
1\le \al \le N_{k-1} \Longrightarrow \la^k_\al = \la^{k-1}_\al
\end{align}
Finally,  choose $w'>0$ such that
\begin{align}
&2w'< {\rm min}_\al w_\al.
%&\pr_{|V|}\bigl(c^Y_{\al,J} (\ov\st(\Oo_\al) \times [0,w')  )\subset \ov{\Nn}_\de\bigl({\textstyle \bigcup_{H\supset I_\al, |H|\le k}} \ov{|V_{HJ}|}\bigr)\quad \forall \al\le N_k, J\subset K_\al,
\end{align}

Now define
\begin{align}\label{eq:dkY}
\p^k Y_{\Vv,J, \,\ve}={\textstyle{\bigcup _{1\le \al\le N_k}} } \bigl\{(e,x;t)\ | \ (e,x;t)\in \ov\st(\Oo'_{J,\al})
%,\, |x|\in \ov{\Nn}_{\de_1}(k)\less  \ov{\Nn}_{\de_2}(k-1)
\bigr\},
\end{align}
where  $\ov\st(\Oo'_{J,\al})$ is defined just as in \eqref{eq:colY} but with $\Oo^\infty_{K,\al}$ replaced by $\Oo^\infty_{K,\al}\cap \pi_\Kk^{-1}(|\Oo_\al'|)$.

Then   for
 each $I\subsetneq J$ with $|I|=k$, we may
use the local collars $c^Y_{J,\al}$ together with the partition of unity  on $\p^k Y_{\Vv,J, \,\ve}$ obtained  by pulling back $(\la^k_\al)$  to construct a collar $$
c^{Y}_{J,k}:\p^k Y_{\Vv,J, \,\ve}\times [0,w_J') \to  Y_{\Vv,J, \,\ve}
$$
 as in Step 2.
Condition~\ref{eq:rhokk} implies that  $c^{Y}_{J,k}$ agrees with  $c^{Y}_{J,k-1}$ on their common domain of definition.  Hence  
the collars fit together to give a well defined collar
\begin{align}\label{eq:colYY}
c^Y_J:\; &  \p'\, Y_{\Vv,J, \,\ve}\times [0,w_J') \to  Y_{\Vv,J, \,\ve}, \;\;\mbox{ where }\\ \notag
 &  \p'\, Y_{\Vv,J, \,\ve} : = \textstyle{\bigcup_{k< |J|} \p^k Y_{\Vv,J, \,\ve}}.
\end{align}
Note that $c^Y_J$ lifts $c^\De_J$ by Remark~\ref{rmk:colllift}. Thus it does have the form required by \eqref{eq:coll1}.

It remains to check that that we can choose collar widths $w_J\le w_J'$ so that the resulting collars have all the required properties.

\begin{itemlist}\item The maps $c^Y_J$ are equivariant, because the local collars are, and the partition of unity is pulled back from $|\Vv^\infty|$.
\item  To see that the  $c^Y_J$ are compatible with projection to $E_{A\less \bullet}$,
suppose that $I\subsetneq J$ has $|I|= k< |J|$.   
Then $c^Y_J$ has the properties in \eqref{eq:coll20}  because all the local collars do.  
Further the points $\io_{EV}(e,x) = (b_I^{-1}\cdot e,x; b_I)$ mentioned in  \eqref{eq:coll2}  lie in $\p^k Y_{\Vv,J, \,\ve}$.  
Therefore $c^Y_J(\io_{EV}(e,x), r)$ is made by combining the local collars $(c^Y_{J,\al})_{\al\le N_k}$.  But we saw in Step 1
that  all these local collars satisfy \eqref{eq:coll2} for $E_{A\less I}$.  It follows that the combined collar formed in Step 2 must also 
satisfy \eqref{eq:coll2} for $E_{A\less I}$.
\item  Similarly, the fact that the relevant local collars that form $c^Y_J$ are invariant under rescaling as in \eqref{eq:rescal} implies
that $c^Y_J$ also satisfies \eqref{eq:rescal}.
\item  To prove that the pairs $(c^Y_J, w_J)$ are compatible with covering maps we need to check two things: \vspace{.01in}
\begin{itemize} \item[{\rm (a)}] that their domains are large enough (i.e. that \eqref{eq:collH} holds for all $I\subsetneq H\subsetneq J$) and
 \item[{\rm (b)}]  that when $H\subsetneq J$ the collar $c^Y_H$ has a natural lift to $Y_{\Vv, J,\,\ve}$. 
 \end{itemize}  \vspace{.03in}
Claim (b) again follows because the local collars used to form $c^Y_H$ (as well as the partition of unity) can be lifted in this way.
(This is just a consequence of equivariance.)
Claim (a) has two parts.  The first claims that 
if $(e,x;t)\in \p'\,  Y_{\Vv,J,\, \ve}$ has $x\in \TV_{IH}\cap \TV_{HJ}$ where $I\subsetneq H \subsetneq J$, then 
$(e,\rho_{HJ}(x);t)$ is in the domain $\p'\,  Y_{\Vv,H,\, \ve}$ of $c^Y_H$. To see this, note that  
$\p'\,  Y_{\Vv,J,\, \ve}$ is the union over $k$ of the sets 
$\p^k\,  Y_{\Vv,J,\, \ve}$ of
 \eqref{eq:dkY}.  But we have
\begin{align*}
 \p^k\,  Y_{\Vv,J,\, \ve} \cap\bigl\{(e,x;t) \ | \ x\in \TV_{IH}\cap \TV_{HJ}\bigr\} & = 
  \p^{|H|}\,  Y_{\Vv,J,\, \ve} \cap\bigl\{(e,x;t) \ | \ x\in \TV_{IH}\cap \TV_{HJ}\bigr\},\\
  &
= \bigl\{(e,x;t) \ | \ (e,\rho_{HK}(x); t) \in    \p^{|H|}\,  Y_{\Vv,H,\, \ve} \},
\end{align*}
where the first equality holds by  \eqref{eq:oo}, while the
second holds because the sets $\ov\st(\Oo^\infty_{J,\al})$ 
 are compatible with the covering maps $\rho_{HJ}$ 
by \eqref{eq:colY}.

The second part of (a) concerns the choice of suitable  widths $w_H\le w_H'$  for all $H\in \Ii_\Kk$.  Since the domains of the collars are 
by now fixed, we can choose each $w_H$ independently:  its choice depends only on the domains of the collars $c_J^Y$ for $J\supsetneq H$.  
Notice that by the definition of the set $\Oo^\infty_{K,\al}$ in \eqref{eq:OKal},  it holds (with $w_H = \frac 12 \de_\al$ for example) for the original domains $\Oo^\infty_{K,\al}$  of the local collars.    Moreover,  because $\de_2<  \de_0$ (where $\de_0$ is the separation distance in
 \eqref{eq:de0}), this property is not affected by
the shrinking from $|\Oo^\infty_{K,\al}|$ to $|\Oo_\al'|$ in \eqref{eq:oo}.    Hence it is easy to see that one can choose suitable $w_H$ for the global collars.

\item  We must check that this collar restricts to any compatible shrinking $(\Vv', \ve')\sqsubset (\Vv,\ve)$.  
But this is immediate since the above construction depends only on the choice of coordinate charts in \eqref{eq:coll00} which restrict to $(\Vv',\ve')$ by the definition of compatibility, and the choice of an appropriate  partition of unity that we can also restrict to $\Vv'$. 

\item Finally we must check that if $\Kk$ is oriented, the collar map $c^Y_J$ preserves the natural induced orientation on its domain and range.   But this is clear from its construction.
 \end{itemlist}
This completes the proof of Lemma~\ref{le:col}.
\end{proof}

\begin{cor}\label{cor:col}  Any reduction $\Vv'$ has a collar compatible shrinking $(\Vv,\ve)$. 
%Moreover, for any given constants $\de_J>0$ we may arrange that the collar width $r_J <\de_J$ for all $J$.
\end{cor}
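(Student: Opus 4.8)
The plan is to produce a compatible reduction $(\Vv^\infty,\ve^\infty)\sqsubset\Vv'$ together with its collars, and then to pass to a further compatible shrinking $(\Vv,\ve)\sqsubset(\Vv^\infty,\ve^\infty)$ in which the constants $\ve$ are forced below the squares of the restricted collar widths. For the first part I would chain the earlier results: Lemma~\ref{le:Ueps} yields a compatible shrinking $\Uu$ of $\Uu^\infty$ (so, after shrinking $\Vv'$ if necessary, we may assume $\Vv'\sqsubset\Uu$); Lemma~\ref{le:W} yields, below $\Vv'$, a reduction adapted to a finite cover $(|W_\al|)_\al$ as in that lemma; Lemma~\ref{le:Veps}(i)--(ii) then gives a compatible reduction $(\Vv^\infty,\ve^\infty)$ below it; and Proposition~\ref{prop:col} (via its proof, Lemma~\ref{le:col}) equips $(\Vv^\infty,\ve^\infty)$ with collars $c^{Y,\infty}_J$ of widths $w^\infty_J>0$.

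The crux is to check that these collars restrict to \emph{every} compatible shrinking $(\Vv,\ve)\sqsubset(\Vv^\infty,\ve^\infty)$ with a width bounded below by a constant not depending on the shrinking. The statement of Proposition~\ref{prop:col} only provides \emph{some} positive restricted widths, so here I would enter its proof. There the local collars constructed in Step~1 keep their widths $w_\al$ when their domains are cut down to the part lying over $(\Vv,\ve)$, since they are built from the maps \eqref{eq:phiHJ0}, \eqref{eq:coll00} using the constant $\eps_I$ rather than $\eps^\infty_I$ and condition~(d$'$) of Definition~\ref{def:compatV} keeps their images in $Y_{\Vv,K,\ve}$; and in Steps~2--3 the global collar is patched together from these local ones using the fixed separation constant $\de_0$ of \eqref{eq:de0} and a partition of unity pulled back from $|\Vv^\infty|$, so that the discussion of collar widths near the end of that proof applies verbatim and yields final widths bounded below by some fixed $w^\ast_J>0$ depending only on the data already fixed. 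I expect this uniform lower bound to be the main obstacle: it is not visible in the statement of Proposition~\ref{prop:col} and has to be extracted from its proof, the content being that the collar width is governed by the constants of the local product charts, not by $\ve$.

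Granting $w^\ast_J$, I would finish by applying Lemma~\ref{le:Veps}(ii) to $(\Vv^\infty,\ve^\infty)$ while adding to the finitely many upper bounds the recursion there already imposes on each $\eps_I$ the single further requirement $\eps_I<(w^\ast_J)^2$ for every $J$ with $I\subsetneq J$. Since that recursion chooses each $\eps_I$ subject only to finitely many such bounds and maintains (a$'$)--(d$'$) throughout---(c$'$) being restored at each stage by the accompanying shrinking of $V_I$---this extra demand causes no difficulty. The resulting $(\Vv,\ve)\sqsubset(\Vv^\infty,\ve^\infty)\sqsubset\Vv'$ is a compatible reduction with $\sqrt{\eps_I}<w^\ast_J\le w_J$ for all $I\subsetneq J$, where $w_J$ is the width of the restricted collar $c^Y_J$ from the previous paragraph; by Definition~\ref{def:collcompat} it is therefore a collar compatible shrinking of $\Vv'$.
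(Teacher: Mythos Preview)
Your approach is correct but takes a different route from the paper's. You argue for a uniform lower bound $w^\ast_J$ on the restricted collar width, valid across \emph{all} compatible shrinkings of $(\Vv^\infty,\ve^\infty)$, extracted from the proof of Lemma~\ref{le:col}: the local collar widths $w_\al$ are fixed, the partition of unity and separation constants $\de_0,\de_1,\de_2$ are fixed on $|\Vv^\infty|$, and the final choice of $w_H$ in Step~3 depends only on these. Given $w^\ast_J$, you then simply impose $\eps_I<(w^\ast_J)^2$ as one more upper bound in the recursion of Lemma~\ref{le:Veps}.

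The paper instead runs a downward induction on $|I|=k$, keeping $(V_J,\eps_J,w_J)$ unchanged for $|J|\ge k+1$ and shrinking only $\eps_I$ (and hence $V_I$, via condition~(c$'$)) for $|I|\le k$. The key observation there is more targeted than yours: this \emph{particular} shrinking removes from each $Y_{\Vv^{k+1},J,\ve^{k+1}}$ with $J\supsetneq I$ only boundary points or interior points with $I(x)\subset I$, none of which lie in $\im(c^Y_{J,k+1})$, so the collar width $w_J$ is literally untouched. One then chooses $\eps^k_I\le(w_J^{k+1})^2$ for all $J\supsetneq I$ with $|I|=k$, and restores compatibility at lower levels as in Lemma~\ref{le:Veps}.

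What each buys: your argument is conceptually cleaner once the uniform $w^\ast_J$ is granted, but that bound is not stated in Proposition~\ref{prop:col} and must be dug out of its proof, as you note. The paper's induction avoids this by never needing a bound uniform over all shrinkings --- it only needs the width to survive one specific kind of shrinking at each step, which is easier to verify directly.
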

\begin{proof}  By Definition~\ref{def:collcompat}, it suffices to construct a compatible $(V_J,\eps_J)$ such that 
\begin{itemlist}\item[{\rm (e)}] for all pairs $I\subsetneq J$ we have $\eps_I \le w_J^2$, where $w_J$ is 
the collar width for $V_J$.
\end{itemlist}  
Without loss of generality, let us suppose that $(\Vv',\ve')$ is compatible,  with collars $c^Y_J$ of widths $w_J'$.   As in the proof of 
 Lemma~\ref{le:Veps}  we work  by downwards induction on $|J|$.  Hence at the $k$th stage, we  assume that we have
 compatible $(\Vv^{k+1},\ve^{k+1})$ such that
 condition (e) holds for all $I\subsetneq J$ with $|I| \ge k+1$, and aim to construct  compatible $(\Vv^{k},\ve^{k}, w_J^k)$  so that 
 (e) holds whenever $|I| \ge k.$  As before we take $(V_J^k, \eps_J^k, w_J^k)  = (V_J^{k+1}, \eps_J^{k+1}, w_J^{k+1})$ if $|J|\ge k+1$.  The key point is this:  if 
 we shrink the set  $(V_I^{k+1}, \eps_I^{k+1})$ where $|I|\le k$ by decreasing $\eps_I^{k+1}$ and hence $V_I^{k+1}$ (because of condition (c) in Definition~\ref{def:compat}), then this does not decrease
 the collar width $c^Y_{J,k+1}$ of any $V_J^{k+1}$ with $I\subsetneq J$, since this change only affects points that either lie in  the boundary of 
 $Y_{\Vv^{k+1},J,\, \ve^{k+1}}$ or are interior points  with $I(x)  = \{i | s_i(x)\ne 0\}\subset I$ that do not occur in $\im (c^Y_{J,k+1})$ because of its construction.   Hence it makes sense to choose $\eps_I^k\le \eps_I^{k+1}$  for the elements $|I|=k$ so that  condition (e) holds at level $k$, and then shrink $V_I^{k+1}$ to a set $V_I^{k}$ that satisfies (a,b,c). As usual, this can be done independently for each $I$ at level $k$.   To complete the inductive step, we then  
make appropriate choices for lower level $I$  as in Lemma~\ref{le:Veps}  to obtain a compatible shrinking $(\Vv^k,\ve^k)$ that satisfies (e) at levels $\ge k$.  This completes the proof.   
\end{proof}

\appendix
\numberwithin{equation}{section}

\section
{Rational \c{C}ech cohomology and homology}\label{ss:app}

We briefly describe the properties 
%The fundamental class of $M$ will lie in one 
of the (co)homology theories in  \cite{Ma} that are based on the properties of Alexander--Spanier cochains.   We do not need the full generality of this theory because the space $M = |\bM|_\Hh$ is locally compact and Hausdorff.  
%We shall need the following properties of these theories.
Throughout we assume that $Y$ is  locally compact and Hausdorff, with $A\subset Y$ closed and $U\subset Y$ open, and take coefficients in $\Q$.
Further, we denote these theories by $\check{H}$ to distinguish them from singular (co)cohomology.
\footnote
{\, In \cite[Ch~10]{Ma}  the theory we call $ \check{H}^*$ below is  denoted 
 by  $H^*_c$ to distinguish it from another theory that does not concern us here.}

We  need the following properties of the cohomology theory.

\begin{itemlist}\item [(a)]  (\cite[Thm~3.21]{Ma}) If $Y $ is a connected orientable $n$-dimensional manifold then $\check{H}^i(Y) = 0$ unless $i=n$ in which case $\check{H}^n(Y) = \Q$, i.e. $\check{H}^*$ is like rational singular cohomology with compact supports;
\item[(b)]  (\cite[\S1.2]{Ma})  If $f: A\to Y$ is proper, there is an induced map $f^*: \check{H}^i(Y)\to \check{H}^i(A)$;
\item[(c)] (\cite[\S1.3]{Ma}) if $U\subset Y$ is open, there is an induced map $f_*: \check{H}^i(U)\to \check{H}^i(Y)$.  Further, if $Y$ is as in (a), and $U$ is an open $n$-disc, then $f_*$ is an isomorphism. 
\item[(d)] (\cite[Thm~1.6]{Ma}) if $A\subset Y$ is closed  then there is an exact sequence
\begin{align}\label{eq:Ales}
\cdots   \to  \check{H}^i(Y\less A) \to \check{H}^i(Y) \to \check{H}^i(A)\stackrel{\de} \to \check{H}^{i+1}(Y\less A) \to \cdots,
\end{align}
i.e.  the group 
 $\check{H}^i(A)$ plays the role of the  relative group ${H}^i(Y, Y\less A)$.
\end{itemlist}
\MS

The dual homology theory  developed in  \cite[Ch~4]{Ma} 
is denoted $H_*^\infty$ in \cite[Ch~10]{Ma} to emphasize that it is analogous to  locally finite singular homology; we shall call it  
$\check{H}_*^\infty$.
It follows from the universal coefficient theorem \cite[Thm.~4.17]{Ma} that 
\begin{align}
\check{H}_k^\infty(X) = \Hom\bigl(\check{H}^k(X);\Q \bigr).
\end{align}
Further, because $Y$ is locally compact and Hausdorff, it follows from the uniqueness property for $\check{H}^*_c$ stated in \cite[\S6.7]{Ma}, that the dual theory $\check{H}_k^\infty$ is isomorphic to rational Borel--Moore homology.

As shown by the following, the functorial properties of $\check{H}_*^\infty$ are different from the usual singular theory. 

\begin{itemlist}\item[(a$'$)] If $Y$ is a connected orientable $n$-manifold, then $\check{H}^\infty_i(Y) = 0$ unless $i=n$ in which case $\check{H}_n^\infty(Y) = \Q$; more generally, any orientable $n$-manifold has a fundamental class 
\begin{align}\label{eq:ffcl} \mu_Y \in \check{H}^\infty_n(Y).
\end{align}
\item[(b$'$)] (\cite[\S4.6]{Ma}) if $U\subset Y$ is open, there is an induced restriction 
\begin{align}\label{eq:rhoYU}
\rho_{Y,U}: \check{H}^\infty_i(Y)\to \check{H}^\infty_i(U);
\end{align}
 moreover for $U_1\subset U_2\subset Y$ we have $\rho_{Y,U_1} = \rho_{U_2,U_1}\circ \rho_{Y,U_2}$.
 \item[(c$'$)] (\cite[\S4.6]{Ma})  If  $f: A\to Y$ is continuous and proper, then there is an induced pushforward  $f_*: \check{H}^\infty_i(A)\to \check{H}^\infty_i(Y)$;  moreover, given a   proper inclusion $\io: A\to Y$,  there is a functorial long exact sequence
 \begin{align}\label{eq:homles}
 \cdots\to \check{H}^\infty_i(A)\stackrel{\io_*}\to \check{H}^\infty_i(Y) \stackrel{\rho_{Y,Y\less A}}\longrightarrow 
 \check{H}^\infty_i(Y\less A) \stackrel{\p }\to \check{H}^\infty_{i-1}(A) \to \cdots
 \end{align} 

 \item[(d$'$)]  (\cite[\S4.3 (3c)]{Ma}) If $f:A\to Y$ is proper and $U$ is open in $Y$, then the following diagram commutes:
 \[
 \xymatrix
 {
 \check{H}^\infty_{i}(A)\ar@{->}[r]^{f_*} \ar@{->}[d]^{\rho_{A,A\cap f^{-1}(U)}} &  \check{H}^\infty_{i}(Y)\ar@{->}[d]^{\rho_{Y,U}}\\
  \check{H}^\infty_{i}(A\cap f^{-1}(U))\ar@{->}[r]^{\;\;\;\;\;\;f_*}  &   \check{H}^\infty_{i}(U).
 }
 \]
 \item[(e$'$)]   
(\cite[\S4.9\,(6)]{Ma}) if $Y=U\cup V$ where $U,V$ are open then there is an exact Mayer--Vietoris sequence of the form: 
 $$
 \cdots \to  \check{H}^\infty_{i+1}(U\cap V) \to  \check{H}^\infty_i(Y) \to  \check{H}^\infty_i(U)\oplus  \check{H}^\infty_i(V) \to  \check{H}^\infty_i(U\cap V) \to\cdots
 $$ 
 In particular, if $U$ is the disjoint union of a finite number of sets of $U_i$, then $$
 \check{H}^\infty_*(U) \cong \oplus_i  \check{H}^\infty_*(U_i).
 $$
 \item[(f$'$)] (\cite[p.334]{Ma}) if $U\subset Y$ is open while $A\subset Y$ is closed, there is a cap product
\begin{align}\label{eq:cap}
\cap:  \check{H}^\infty_{p+q}(Y\less A) \otimes \check{H}^p(Y\less U) \to  \check{H}^c_{q}(Y, U\cup A).
\end{align}
 This  takes values in compactly supported \c{C}ech homology, a theory whose functorial properties are analogous to those of
 the usual singular homology.  In particular,  if the triple $(U\cup A; U, A)$ is {\it excisive} for $\check{H}^c$  (i.e. 
 $ \check{H}^c_{q}(A, U\cap A) \cong  \check{H}^c_{q}(U\cup A,U) $), then 
 there is a commutative 
 diagram
 \begin{align} \label{eq:commcap}
 \xymatrix
 {
 \check{H}^\infty_{p+q + 1}(Y\less A) 
\otimes \check{H}^p(Y\less U) \ar@{->}[d]_{\p\otimes (-1)^p\, \io^*} \ar@{->}[r]^{\qquad\quad \cap}   &  \check{H}^c_{q+1}(Y, U\cup A) \ar@{->}[d]_{\de}
 \\
 \check{H}^\infty_{p+q}(A)   \otimes \check{H}^p(A\less U)  \ar@{->}[r]^{\qquad \cap}  & \check{H}^c_{q}(A, U\cap A) .
}
\end{align}
Note that the above diagram exists when  $Y$ is locally compact, $A$ is closed  and $Y\less U$ is compact.  To see this, 
choose a 
nested sequence $\Nn_k$ of precompact open neighborhoods of $Y\less U$ in $Y$ with
$$
Y\less U = {\textstyle \bigcap_k} \Nn_k, \quad U = {\textstyle \bigcup_k}  (Y\less \Nn_k).
$$
Since  by definition
$$
\check{H}^c_{*}(Y, U\cup A) =   \underset{{\leftarrow}}\lim\; \check{H}^c_{*}(Y, (Y\less \Nn_k)\cup A),  \qquad 
\check{H}^c_{*}(A, U\cap A) =   \underset{{\leftarrow}}\lim\; \check{H}^c_{*}(A, A\less \Nn_k),
$$
and the triple of closed sets 
$(Y,Y\less \Nn_k, A)$ is excisive by \cite[Cor.9.5]{Ma}, it follows that  $(Y,Y\less U, A)$ is excisive as required.
\item[(g$'$)] (Exercise 5 on p 272 of \cite{Ma}) If $X$ is  Hausdorff and $X\less A$ is a precompact open subset of $X$, then $\check{H}^\infty_{*}(X\less A) = \check{H}^c_{*}(X,A)$.
\item[(h$'$)]  This homology is {\it taut}; i.e.
 if  $X\subset Y$ is closed, where  $Y$  is locally compact and  Hausdorff, and $N_{k+1}\subset N_k$ is a nested sequence of 
closed  neighborhoods of $X$ in $ Y$, then
 (by \cite[Thm~6.4]{Ma})
 $$
 \check{H}_d^\infty(X) =  \underset {\leftarrow}\lim ( \check{H}^\infty_d(N_k)).
  $$
\end{itemlist}
\MS

\NI {\bf Acknowledgements:}  I thank Mohammed Abouzaid, Helmut Hofer, and Katrin Wehr\-heim for enlightening conversations, and  Eleny Ionel and Brett Parker for very helpful comments on an earlier version of this paper.   I also warmly thank the referee for a very careful reading and many pertinent comments that helped to improve the exposition and the accuracy of some of the lemmas.
Finally, I thank the IAS Women and Mathematics program and the Newton Institute for  Mathematical Sciences for their hospitality during the writing of this paper.


\begin{thebibliography}{CCCCC}


\bibitem[C1]{Castell1} R. Castellano, Smoothness of Kuranishi atlases on Gromov--Witten moduli spaces, arXiv:1511.04350.

\bibitem[C2]{Castell2} R. Castellano, Genus zero Gromov-Witten axioms via Kuranishi atlases,  arXiv:1601.04048.

\bibitem[FF]{TF}  M. Farajzadeh-Tehrani and K. Fukaya, Gromov--Witten theory via Kuranishi Structures, arXiv:1701.07821.

\bibitem[FO]{FO}  K. Fukaya and K. Ono, Arnold conjecture and
Gromov--Witten invariants, {\it Topology}  {\bf 38} (1999), 933--1048.
%

\bibitem[Hat]{Hat}  A. Hatcher, Algebraic Topology, Wiley

\bibitem[H]{H:sur} H.~Hofer,  Polyfolds and Fredholm theory, arXiv:1412.4255, to appear in  {\it Lectures on Geometry}, Clay Lecture Note series, OUP, 2017.

\bibitem[HWZ1]{HWZ} H.~Hofer, K.~Wysocki, and E.~Zehnder, 
       Applications of  Polyfold theory I: The Polyfolds of Gromov--Witten theory,
      arXiv:1107.2097.

\bibitem[HWZ2]{Hbigbook} H.~Hofer, K.~Wysocki, and E.~Zehnder, Polyfold and Fredholm theory,       arXiv:1707.08941.


\bibitem[Ma]{Ma} W. Massey, {\it Homology and Cohomology theory}, Dekker, 1978.

\bibitem[M1]{Mbr} D. McDuff,
 Branched manifolds, groupoids, and multisections, 
 {\it Journal of Symplectic Geometry} {\bf 4} (2007) 259--315.
 
\bibitem[M2]{Mnot} D. McDuff, Notes on Kuranishi atlases, arXiv:1411.4306

\bibitem[M3]{Morb} D. McDuff, Strict orbifold atlases and weighted branched manifolds,  to appear in {\it J. Symp. Geom.}, arXiv:1506.05350


\bibitem[MW1]{MW1}  D. McDuff and K. Wehrheim, The topology of Kuranishi atlases, {\it Proceedings of the London Mathematical Society} (3) {\bf 115} (2017), no 2,  221-292.

\bibitem[MW2]{MW2} D.\ McDuff and K.\ Wehrheim,   The fundamental class of smooth Kuranishi atlases with trivial isotropy, 
 {\it Journal of Topology and Analysis}  Vol 10 (2018).
 
\bibitem[MW3]{MWiso}  D. McDuff and K. Wehrheim, Smooth Kuranishi atlases
        Geometry \& Topology 21-5 (2017), 2725--2809. 

\bibitem[MW4]{MWgw} D.\ McDuff and K. Wehrheim,  work in progress.


\bibitem[P]{P} John Pardon,  An algebraic approach to virtual fundamental cycles on 
moduli spaces of $J$-holomorphic curves,
{\it Geom. Topol.} {\bf 20} (2016), 779-1034.




\bibitem[Y]{Yang}  Dingyu Yang, The polyfold--Kuranishi correspondence I: a choice-independent theory of Kuranishi structures, 
arXiv:1402.7008


\end{thebibliography}
\end{document}